\numberwithin{equation}{section}
\definecolor{warmblack}{rgb}{0.0, 0.26, 0.26}
\def\BState{\State\hskip-\ALG@thistlm}
\newtheorem{theorem}{Theorem}[section]
\newtheorem{lemma}[theorem]{Lemma}
\theoremstyle{definition}
\newtheorem{definition}{Definition}[section]
\newtheorem{example}{Example}[section]
\journal{FILOMAT}
\def\rnn{{\mathbb R}^{n\times n}}
\def\rmn{{\mathbb R}^{m\times n}}
\begin{document}

\begin{frontmatter}

\title{ \textcolor{warmblack}{\bf Regularized Iterative Method for Ill-posed Linear Systems Based on Matrix Splitting}}

\author[label1]{Ashish Kumar Nandi}
\address[label1]{Department of Mathematics, BITS Pilani, K.K. Birla Goa Campus, Goa, India}
\ead{ashish.nandi123@gmail.com}
\cortext[cor1]{Corresponding author}

\author[label1]{Jajati Keshari Sahoo\corref{cor1}}
\ead{jksahoo@goa.bits-pilani.ac.in}


\begin{abstract}
In this paper, the concept of matrix splitting is introduced to solve a large sparse ill-posed linear system via Tikhonov’s regularization. In the regularization process, we convert the ill-posed system to a well-posed system. The convergence of such a well-posed system is discussed by using different types of matrix splittings. Comparison analysis of both systems are studied by operating certain types of weak splittings. Further, we have extended the  double splitting of [Song J. and Song Y, Calcolo 48(3), 245–260, 2011]  to double weak splitting of type II for nonsingular symmetric matrices. In addition to that, some more comparison results are presented with the help of such weak double splittings of type I and type II. 
\end{abstract}

\begin{keyword}
 Moore-Penrose inverse, Proper
splitting, Weak splitting, Double weak splitting, Iterative regularization methods.\\
{\it Mathematics Subject Classification:} 15A09, 65F10, 65F22.
\end{keyword}
\end{frontmatter}

\section{Introduction}
In the view of Hadamard \cite{lahadaj1}, the discretization of  Fredholm integral equations of the first kind \cite{lagrcw1} is  formed an ill-posed linear system
\begin{equation}\label{laeq1}
Ax = b, 
\end{equation}
where $A\in\rmn,~{x\in{\mathbb{R}^{n}}}$  and ${b\in{\mathbb{R}^{m}}}$. In practice, this type of ill-posed system appears in several branches of science and engineering such as noisy image restoration \cite{land1}, computer tomography \cite{laher1} and inverse problems within electromagnetic \cite{larom1}. Ill-posed problems were extensively studied in the context of an inverse problem \cite{lacalrz1,engl, groet} and image restorations \cite{bert}. In image restoration, the main objective is to establish a blurred free image that requires the approximate solution of the system (\ref{laeq1}). For more details one can refer \cite{land1,bert}. To find the approximate solution of the ill-posed system (\ref{laeq1}), several iterative methods such as Accelerated Landweber iterative method \cite{lahanke1}, GMRES and singular preconditioner method \cite{laeldsv1}, conjugate gradient method are studied in the recent past. However, the utilization of the splittings method along with regularization is quite a new idea. 

In order to solve the system $Ax=b$, i.e, find the least square solution $A^\dagger b$, we first normalize as $A^{T}Ax = A^{T}b$. This does not make the problem simple as most of the cases the matrix $A^{T}A$ is singular and ill-conditioned which is affected highly by round-off errors \cite{laggn1}. Thus we need to make the system $Ax=b$,  well-posed by introducing a regularization parameter $\lambda (>0)$, and the corresponding modified well-posed system based on Tikhonov's regularization \cite{latikon1} is given by
\begin{equation}\label{laeq2}
(A^{T}A+\lambda I)x = A^{T}b. 
\end{equation}
If we consider $B_{\lambda} = A^{T}A+\lambda I$, then the system $(\ref{laeq2})$ reduces to the following system 
\begin{equation}\label{lareq1}
   B_{\lambda}x =  A^{T}b. 
\end{equation}
The above procedure is known as regularization and the parameter $\lambda$ determines what extent the original ill-posed system (\ref{laeq1}) is changed. There are several ways to regularize such type of ill-posed system. Among them, the most classical regularization is {\it Tikhonov's regularization} introduced by Tikhonov in $1963$ \cite{latikon1}. Some iterative methods for the system (\ref{laeq2}) in framework of operator theory can be found in \cite{lahgcw1} and the references therein. The main motivation to analyze and compare the numerical solution of both system (\ref{laeq1}) and (\ref{lareq1}) is comes from Barata and Hussein \cite{labarh1}, where the authors have shown that $B_{\lambda}^{-1}A^Tb\to A^\dagger b$ as $\lambda\to 0$.

On the other hand, the matrix splitting (A decomposition $A=U-V$ is called a splitting of the matrix $A$) methods are more significant and numerically stable in dealing with rectangular matrices.  In this direction, Berman and Plemmons \cite{labpc1} first introduced the \textit{proper splitting} ( A splitting  $A = U-V$  is called \textit{proper} if the null space of $A$ is equal to the null space of $U$ and the range space of $A$ is equal to range space $U$). If $A=U-V$ is a proper splitting of $A\in\rmn$, then the associated iterative scheme for solving $Ax=b$,  is given by
\begin{equation}\label{laaeq4}
x^{k+1} = U^{\dag}Vx^{k}+U^{\dag}b.
\end{equation}
It is well known that the iterative scheme defined in  (\ref{laaeq4}) converges to $A^{\dag}b$ if and only if the spectral radius of $U^\dagger V$ is less than $1$. Further, if the system $Ax=b$ is consistent, then the
above iterative process converges to a solution of \eqref{laeq1}. In \cite{labpc1}, it was proved
that if $A = U-V$ is a proper splitting such that $U^\dagger\geq 0$ (entry-wise) and $U^\dagger V\geq 0$, then $A^\dagger\geq 0$ if and only if the spectral radius of $U^\dagger V$ is less than $1$. 

In case of nonsingular coefficient matrix $B_{\lambda}$, if $B_{\lambda}=M_{\lambda}-N_{\lambda}$ is a splitting of $B_{\lambda}\in\rnn$ such that $M_{\lambda}$ is invertible, then the associated iterative is given by \begin{equation}\label{eq113}
 x^{k+1} = M_{\lambda}^{-1}N_{\lambda}x^{k}+M_{\lambda}^{-1}A^{T}b. 
\end{equation}
It is clear that this iterative method converges to $B_{\lambda}A^Tb~(=A^\dagger B as \lambda \to 0)$ if and
only if the spectral radius of $M_{\lambda}^{-1}N_{\lambda}$  is less than $1$.  We call a splitting convergent if the associated iterative scheme convergent. Several types of splittings and numerous comparative studies can be found in the literature \cite{lalic2014,  lamsz1, lamiao2012,lamsd1,lason1,lawzi1} and the reference therein.

The main objective of this article is to introduce a new regularized nonsingular approach for the rectangular or singular system and study the convergence of the iterative method \ref{eq113} associated with different types of splittings of $B_{\lambda}$. In case of the regularized iterative scheme, we can relax some strong conditions such as non-negativeness of $A$, $A^{\dagger}$ to assure the convergence. Besides, we have introduced a new matrix splitting, called double weak splitting of type II. Further, several comparative studies between the original system \eqref{laeq1} and regularized system \eqref{lareq1} are provided. The theoretical results provided shows that the regularized iterative scheme convergence faster (in terms of spectral radius).

\subsection{Outline}
The paper is organized as follows: Some useful notations and definitions are discussed in  Section \ref{prel}. In addition to these we review some basic theories of iterative methods, which will be used throughout this paper. The main results of this article is elaborated in Section \ref{main}. Numerous comparison results related to the systems (\ref{laeq1}) and (\ref{lareq1}) are established. Also, a double weak splitting of type II newly introduced as well as a few comparison theorems have been proved for the double weak splitting of type II. The manuscript is concluded along with a few future research perspectives in Section \ref{con}.

\section{Preliminaries}\label{prel}
First, we elaborate on some notations and definitions which will be useful throughout the article. The set of all real rectangular matrices of order $m\times n$ is denoted by $\mathbb{R}^{m\times{n}}$. For matrices $A,B\in\mathbb{R}^{m\times{n}}$, a matrix  $B$ is said to be {\it nonnegative} ($B\geq 0$) if all entries of $B$ are nonnegative and $A\geq B$ implies $A-B\geq 0$. If $L$ and $M$ are two complementary subspaces of $\mathbb{R}^{n}$, then $P_{L,M}$ is the projection on $L$ along $M$. So, $P_{L,M}B = B$ if and only if $R(B)\subseteq L$ and $BP_{L,M} = B$ if and only if $N(B)\supseteq M$. Henceforth, $R(A)$ and $N(A)$ denotes the range space and null space of the matrix $A$. We denote the transpose of a matrix $A$  by $A^{T}$.  The {\it spectral radius} of a matrix $B\in\mathbb{R}^{n\times{n}}$ is denoted as $\rho(B)$ and defined by $\rho(B) = \max\limits_{1\leq i\leq n} |\sigma_{i}|$, where $\sigma_{i}$'s are the eigenvalues of $B$. It is well known that for any square matrix $B$, $\rho(B^{T}) = \rho(B)$ and $\rho(AB) = \rho(BA)$ for well defined product of matrices $A,B$. We recall the Moore-Penrose inverse of a matrix $B$. The unique
matrix $X\in\mathbb{R}^{n\times{m}}$, satisfying $BXB = B,~XBX=X,~(BX)^{T}=BX~\text{and}~ (XB)^{T}=XB$, is called the {\it Moore-Penrose inverse} of $B$ and denoted by $B^\dagger$. A few properties of $B^\dagger$ which are frequently being used: $R(B^{T}) = R(B^{\dag})$; $N(B^{T}) = N(B^{\dag})$; $B^{\dag}B = P_{R(B^{T})}$ and $BB^{\dag} = P_{R(B)}$.  Further, a non singular matrix $B$ is called {\it monotone} if $A^{-1}\geq 0$. Similarly, we cal a matrix $B\in\mathbb{R}^{m\times{n}}$  {\it semi-monotone} if $B^{\dag}\geq 0$. 

Next, we discuss some necessary results based on non-negativeness regularization, matrix splittings. The very first result is  for nonnegative matrices. 
\begin{theorem}\cite{labpn1}\label{2.4}
Let  $B\in\rnn$, $B\geq 0$, $x\geq 0$ $(x\neq 0)$ and $\alpha$ be a positive scalar. Then the following are holds.
\begin{enumerate}
    \item[(i)] If $\alpha x\leq Bx$, then $\alpha\leq \rho(B)$.
    \item[(ii)] For $x>0$, if $Bx\leq \alpha x$,  then $\rho(B) \leq \alpha$.
\end{enumerate}
\end{theorem}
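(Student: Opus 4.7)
The plan is to prove both parts by iterating the given inequality, translating the iterated bound into a bound on an induced matrix norm, and then applying Gelfand's spectral radius formula $\rho(B)=\lim_{k\to\infty}\|B^k\|^{1/k}$ (valid for any submultiplicative matrix norm).

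For part (i), I would first observe that since $B\ge 0$ preserves component-wise inequalities under multiplication, a straightforward induction on $k$ upgrades $\alpha x\le Bx$ to $\alpha^k x\le B^k x$ for every $k\ge 1$. Because $x\ge 0$ and $x\neq 0$, some component $x_{i_0}>0$. Looking at the $i_0$-th coordinate and using nonnegativity of $B^k$, I would bound
\begin{equation*}
\alpha^k x_{i_0}\le (B^k x)_{i_0}\le \|B^k x\|_\infty\le \|B^k\|_\infty\,\|x\|_\infty.
\end{equation*}
Taking $k$-th roots and letting $k\to\infty$, the factor $(\|x\|_\infty/x_{i_0})^{1/k}$ tends to $1$, while $\|B^k\|_\infty^{1/k}\to\rho(B)$ by Gelfand, yielding $\alpha\le\rho(B)$.

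For part (ii), the same induction gives $B^k x\le\alpha^k x$. The stronger hypothesis $x>0$ now lets me squeeze $x$ between positive multiples of the all-ones vector $e$: there exist $c_1,c_2>0$ with $c_1 e\le x\le c_2 e$. Combined with $B^k\ge 0$, this gives
\begin{equation*}
c_1\, B^k e \le B^k x \le \alpha^k x \le \alpha^k c_2\, e,
\end{equation*}
so componentwise $B^k e\le (c_2/c_1)\alpha^k e$. Since $B^k\ge 0$, one has $\|B^k\|_\infty=\|B^k e\|_\infty$, and therefore $\|B^k\|_\infty^{1/k}\le \alpha(c_2/c_1)^{1/k}$. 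Sending $k\to\infty$ gives $\rho(B)\le\alpha$.

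The only delicate point is making sure the inequalities go the right way at the norm level: for (i) I need a \emph{lower} bound on $\|B^k\|$, which is why I project onto a coordinate where $x_{i_0}>0$ to extract a scalar inequality rather than trying to use a vector norm on the left; for (ii) I need an \emph{upper} bound on $\|B^k\|_\infty$, which is exactly why the strict positivity $x>0$ is indispensable—without it I could not dominate $e$ by a multiple of $x$ and the reverse direction would fail. Once these directions are set up correctly, Gelfand's formula closes both arguments.
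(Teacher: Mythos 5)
Your proof is correct: the inductive upgrades $\alpha^k x\le B^k x$ and $B^k x\le\alpha^k x$ are valid because multiplication by the nonnegative matrix $B$ preserves componentwise inequalities, the coordinate projection in (i) and the squeeze $c_1e\le x\le c_2e$ in (ii) are exactly the right devices to turn these into a lower, respectively upper, bound on $\|B^k\|_\infty^{1/k}$, and Gelfand's formula then finishes both parts. Note, however, that the paper itself offers no proof of this statement --- it is quoted as a known result from Berman and Plemmons \cite{labpn1} --- so the natural comparison is with the classical treatment there, which proceeds through Perron--Frobenius theory (equivalently, the Collatz--Wielandt characterizations of $\rho(B)$: part (i) is the ``max--min'' bound over nonnegative vectors, part (ii) the ``min--max'' bound over positive vectors). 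Your route is genuinely different and in some ways more elementary: it needs no existence of Perron eigenvectors and sidesteps the support/irreducibility bookkeeping that a naive left-eigenvector pairing argument would require (the pairing $y^Tx$ can vanish when $x$ is merely nonnegative), at the cost of invoking Gelfand's spectral radius formula. The Collatz--Wielandt approach, by contrast, gives the two inequalities as immediate specializations of a variational formula and also identifies when equality can occur, which your limiting argument does not address; but as a proof of the stated inequalities your argument is complete, and your closing remark correctly isolates why strict positivity of $x$ is needed only in part (ii).
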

We now collect a few parts of the classical Perron-Frobenius theorem. Perron proved it for positive matrices and Frobenius gave the extension to irreducible matrices. 
\begin{theorem}[Theorem $2.20$, \cite{lavm1}]\label{2.1}
Let $A\in\rnn$ be a nonnegative matrix. Then
\begin{enumerate}
    \item[(i)] $A$ has a nonnegative real eigenvalue equal to its spectral radius.
        \item[(ii)] $(ii)$ there exists a nonnegative eigenvector for its spectral radius.
        \end{enumerate}
\end{theorem}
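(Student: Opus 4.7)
The plan is to prove this classical Perron--Frobenius statement by first handling the strictly positive case via a Collatz--Wielandt style variational argument, and then extending to general nonnegative matrices by a perturbation/continuity argument.

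For the case $A>0$ entrywise, I would define, for nonzero $x\geq 0$,
\[
r(x)=\min_{i:\,x_i>0}\frac{(Ax)_i}{x_i},
\]
and study it on the compact set $K=\{x\geq 0:\|x\|_1=1\}$. Since $A>0$, the image $A(K)$ lies in the strictly positive cone, so $r$ is continuous there and satisfies $r(Ax)\geq r(x)$; together with compactness this produces a maximizer $x^*$ with value $r^*=r(x^*)$. The crucial claim is that $Ax^*=r^*x^*$: otherwise some coordinate of $Ax^*-r^*x^*$ is strictly positive, and applying $A$ once more (which spreads positivity to every coordinate because $A>0$) one could find $\varepsilon>0$ with $A(Ax^*)\geq(r^*+\varepsilon)Ax^*$, contradicting the maximality of $r^*$ via Theorem~\ref{2.4}(i). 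This yields a strictly positive eigenvector with positive eigenvalue $r^*$. To identify $r^*$ with $\rho(A)$, I would take any (possibly complex) eigenpair $(\mu,y)$ and use the componentwise inequality $A|y|\geq|Ay|=|\mu||y|$, which gives $|\mu|\leq r(|y|)\leq r^*$, hence $\rho(A)\leq r^*$; the reverse inequality is immediate since $r^*$ is itself an eigenvalue.

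For the general nonnegative case, I would perturb by $A_\varepsilon=A+\varepsilon J$, where $J$ is the all-ones matrix, so that $A_\varepsilon>0$ for $\varepsilon>0$. The positive case furnishes an eigenvalue $\lambda_\varepsilon=\rho(A_\varepsilon)$ with positive eigenvector $v_\varepsilon$, which I normalize by $\|v_\varepsilon\|_1=1$. A Bolzano--Weierstrass extraction along a sequence $\varepsilon_k\downarrow 0$ yields a subsequential limit $(\lambda,v)$ with $v\geq 0$, $\|v\|_1=1$, satisfying $Av=\lambda v$ upon passing to the limit in $A_{\varepsilon_k}v_{\varepsilon_k}=\lambda_{\varepsilon_k}v_{\varepsilon_k}$. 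Continuity of the spectral radius in the matrix entries (eigenvalues are continuous functions of the coefficients of the characteristic polynomial) forces $\lambda_{\varepsilon_k}\to\rho(A)$, so $\lambda=\rho(A)\geq 0$, establishing both (i) and (ii).

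The main obstacle I anticipate is the identification $r^*=\rho(A)$ in the strictly positive case, since $r^*$ is defined purely through real componentwise inequalities whereas the spectrum of $A$ is \emph{a priori} complex; this is resolved by exploiting $A\geq 0$ to pass the triangle inequality through $A$ and obtain $A|y|\geq|\mu||y|$. A secondary delicate point in the perturbation step is ensuring the limiting eigenvector is nonzero, which is guaranteed by propagating the normalization $\|v_\varepsilon\|_1=1$ through the limit.
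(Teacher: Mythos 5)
Your proposal is correct, but note that the paper does not prove this statement at all: it is quoted verbatim as Theorem 2.20 of Varga's \emph{Matrix Iterative Analysis} (reference \cite{lavm1}) and used as a preliminary, so there is no internal proof to compare against. Your argument is a self-contained proof along the classical lines, and in fact it mirrors the route Varga's book itself takes: establish the Perron result for strictly positive (or irreducible) matrices via the Collatz--Wielandt function $r(x)=\min_{i:x_i>0}(Ax)_i/x_i$, then pass to a general nonnegative $A$ by perturbing to $A_\varepsilon=A+\varepsilon J>0$, normalizing the positive eigenvectors, and extracting a convergent subsequence, with continuity of the spectral radius identifying the limit eigenvalue as $\rho(A)$; the normalization $\|v_\varepsilon\|_1=1$ correctly guarantees a nonzero limit eigenvector. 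Two small presentational points: the contradiction in the maximality step needs no appeal to Theorem~\ref{2.4}(i) --- if $Ax^*\neq r^*x^*$ then, after one more application of $A>0$ and renormalization, one exhibits a vector in the (compact, normalized) image of $A$ at which $r$ exceeds $r^*$, contradicting the definition of $r^*$ as the maximum --- and you should make explicit that $r$ is only upper semicontinuous on the whole simplex $K$, so the maximization must indeed be carried out on the strictly positive, suitably normalized image $A(K)$, exactly as you indicate. Neither point is a gap; the proof is sound, merely more detailed than the paper, which buys self-containedness at the cost of reproving a standard textbook theorem.
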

\begin{theorem}[Theorem $2.7$, \cite{lavm1}]\label{la2.10}
Let $A\in\rnn$ be a nonnegative matrix. If $A$ is irreducible, then 
\begin{enumerate}
    \item[(i)] $A$ has a positive real eigenvalue equal to its spectral radius.
\item[(ii)] there exists a positive eigenvector for its spectral radius.
\end{enumerate}
\end{theorem}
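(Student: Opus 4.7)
The plan is to leverage the earlier Theorem \ref{2.1} (the version of Perron--Frobenius already stated for general nonnegative matrices) to obtain a \emph{nonnegative} eigenvector $x \ge 0$, $x \ne 0$, corresponding to the nonnegative eigenvalue $\rho(A)$, and then to use irreducibility to upgrade ``nonnegative'' to ``positive'' in both (i) and (ii).

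The core step would be the classical lemma that for an irreducible nonnegative matrix $A \in \rnn$ (with $n \ge 2$), the matrix $(I+A)^{n-1}$ is entry-wise strictly positive. I would prove this via the standard digraph argument: irreducibility of $A$ is equivalent to the directed graph on $\{1,\dots,n\}$ with an edge $i\to j$ whenever $a_{ij}>0$ being strongly connected, so for any pair $(i,j)$ there is a directed walk of length at most $n-1$ from $i$ to $j$. Expanding $(I+A)^{n-1}$ by the binomial theorem as a nonnegative combination of $I, A, A^2, \ldots, A^{n-1}$, the $(i,j)$ entry is positive because at least one power $A^{k}$ with $0\le k \le n-1$ has a positive $(i,j)$ entry.

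Given this lemma, I would apply it to the eigenvector $x$ produced by Theorem \ref{2.1}. From $Ax=\rho(A)x$ one gets $(I+A)^{n-1}x=(1+\rho(A))^{n-1}x$. The left-hand side is a strictly positive matrix acting on a nonzero nonnegative vector, hence has strictly positive entries, which forces $x>0$. This proves (ii). For (i), I would argue that $\rho(A)>0$ by contradiction: if $\rho(A)=0$, then $Ax=0$ with $x>0$, so every row satisfies $\sum_j a_{ij}x_j=0$ with $a_{ij}\ge 0$ and $x_j>0$, forcing $A=0$, which is not irreducible for $n\ge 2$.

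The main obstacle, and the only non-mechanical step, is the lemma $(I+A)^{n-1}>0$: one has to either invoke the graph-theoretic characterization of irreducibility or, equivalently, prove directly that for every $i\ne j$ there exists $k\le n-1$ with $(A^k)_{ij}>0$. Once that tool is in place, both conclusions fall out almost immediately from the nonnegative case already stated in Theorem \ref{2.1}. Everything else---the binomial expansion, the eigenvalue transfer $(1+\rho(A))^{n-1}$, and ruling out $\rho(A)=0$---is routine.
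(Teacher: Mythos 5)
Your argument is correct, but there is nothing in the paper to compare it with: the statement is quoted verbatim as Theorem 2.7 of Varga's \emph{Matrix Iterative Analysis} (\cite{lavm1}) and is used in Section \ref{prel} as a black-box preliminary, with no proof given. What you supply is the standard textbook bootstrapping of the irreducible Perron--Frobenius theorem from the nonnegative one: Theorem \ref{2.1} of the paper already furnishes a nonnegative eigenvector $x\neq 0$ with $Ax=\rho(A)x$, the graph-theoretic lemma $(I+A)^{n-1}>0$ for irreducible $A$ (walks of length $\le n-1$ between any two vertices, plus the identity term for the diagonal) upgrades $x\ge 0$ to $x>0$ via $(I+A)^{n-1}x=(1+\rho(A))^{n-1}x$, and $\rho(A)>0$ follows because $Ax=0$ with $x>0$ would force $A=0$, which is reducible for $n\ge 2$. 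All of these steps are sound, and deducing the result from Theorem \ref{2.1} rather than reproving Perron--Frobenius from scratch is exactly the economical route. The only loose end is the degenerate case $n=1$ (the $1\times 1$ zero matrix), where conclusion (i) fails under some conventions for irreducibility; you implicitly assume $n\ge 2$, which is the usual convention and matches how the theorem is applied in the paper (e.g.\ in the proof of Theorem \ref{lam1.1}), so this is a remark about conventions rather than a gap.
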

In connection to the spectral radius, the following result is collected from \cite{lavm1}. 
\begin{theorem}[Theorem $2.21$, \cite{lavm1}]\label{2.25}
If $A, B\in\rnn$ and $A\geq B\geq 0$, then $\rho(A)\geq \rho(B)$.
\end{theorem}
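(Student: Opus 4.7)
The plan is to deduce this directly from the Perron--Frobenius theorem (Theorem \ref{2.1}) combined with the spectral inequality criterion given in Theorem \ref{2.4}(i). The idea is to use a Perron eigenvector of $B$ as a test vector to bound $\rho(A)$ from below.

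First I would dispose of the degenerate case $\rho(B) = 0$: since $A \geq 0$, its spectral radius is nonnegative (it is in fact an eigenvalue by Theorem \ref{2.1}(i)), so $\rho(A) \geq 0 = \rho(B)$ trivially. Thus we may henceforth assume $\rho(B) > 0$, which puts us in position to invoke Theorem \ref{2.4}(i), whose hypothesis requires a positive scalar.

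Next, since $B \geq 0$, Theorem \ref{2.1} applied to $B$ yields a nonzero vector $x \geq 0$ with $Bx = \rho(B)\, x$. Using the hypothesis $A \geq B$ together with $x \geq 0$ entrywise, we obtain
\begin{equation*}
Ax \;\geq\; Bx \;=\; \rho(B)\, x.
\end{equation*}
Thus $\rho(B)\, x \leq Ax$ with $A \geq 0$, $x \geq 0$, $x \neq 0$, and $\rho(B) > 0$. Applying Theorem \ref{2.4}(i) (with matrix $A$ and scalar $\alpha = \rho(B)$) immediately gives $\rho(B) \leq \rho(A)$, which is the desired conclusion.

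There is essentially no obstacle here beyond handling the degenerate case $\rho(B)=0$ separately so that the positivity hypothesis on $\alpha$ in Theorem \ref{2.4}(i) is met. The whole argument is a two-line chain once the Perron eigenvector of $B$ is in hand, and no further structural assumption on $A$ or $B$ (such as irreducibility) is needed.
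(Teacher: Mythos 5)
Your argument is correct. Note that the paper does not prove this statement at all: it is quoted verbatim as Theorem 2.21 of Varga's \emph{Matrix Iterative Analysis} and used as a black box, so there is no in-paper proof to compare against. Your derivation is the standard one and fits the paper's toolkit exactly: take a Perron eigenvector $x\geq 0$, $x\neq 0$, of $B$ (Theorem \ref{2.1}), observe $Ax\geq Bx=\rho(B)x$ because $A-B\geq 0$ and $x\geq 0$, and invoke Theorem \ref{2.4}(i) with $\alpha=\rho(B)$; your separate treatment of $\rho(B)=0$ is the right way to respect the positivity hypothesis on $\alpha$. For completeness, an alternative route that avoids Perron--Frobenius entirely is to note that $0\leq B\leq A$ gives $0\leq B^{k}\leq A^{k}$ for all $k$, hence $\|B^{k}\|\leq\|A^{k}\|$ in any monotone norm, and conclude via $\rho(M)=\lim_{k\to\infty}\|M^{k}\|^{1/k}$; that is closer to how such monotonicity results are often proved in the cited source, but your eigenvector argument is equally valid and shorter given the lemmas already stated in the paper.
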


In view of proper splitting, we state the following essential results \begin{theorem}[Theorem $1$, \cite{lacdp1}]\label{2.8}
Let $A = U-V$ be a proper splitting of $A\in\rmn$. Then
\begin{enumerate}
    \item[(i)]  $A = (I-VU^{\dag})U = U(I-U^\dagger V)$,
 \item[(ii)] $I-VU^{\dag}$ is nonsingular,
 \item[(iii)] $A^{\dag} = U^{\dag}{(I-VU^{\dag})}^{-1}=(I-U^\dagger V)^{-1}U^\dagger$.
\end{enumerate}
\end{theorem}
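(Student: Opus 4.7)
The plan is to prove the three parts in order, exploiting the defining identities of a proper splitting, namely $R(U)=R(A)$ (hence $UU^\dagger = P_{R(A)}$) and $N(U)=N(A)$ (hence $U^\dagger U = P_{R(A^T)}$), together with the Moore--Penrose identities $UU^\dagger U = U$ and $U^\dagger UU^\dagger = U^\dagger$. I will repeatedly use that $V = U - A$.

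\textbf{Step 1 (Part (i)).} Expand $(I - VU^\dagger)U = U - VU^\dagger U = U - (U-A)U^\dagger U = U - UU^\dagger U + AU^\dagger U = AU^\dagger U$. Since $U^\dagger U = P_{R(U^T)} = P_{R(A^T)}$, and for every $x\in\mathbb{R}^n$ the vector $x - P_{R(A^T)}x$ lies in $N(A)$, we get $AU^\dagger U = A$. Symmetrically, $U(I - U^\dagger V) = UU^\dagger A = P_{R(A)}A = A$.

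\textbf{Step 2 (Part (ii)).} Suppose $(I - VU^\dagger)x = 0$. Decompose $x = x_1 + x_2$ with $x_1 \in R(A) = R(U)$ and $x_2 \in R(A)^\perp = N(U^\dagger)$. Since $U^\dagger x_2 = 0$, the equation collapses to $x_1 + x_2 = UU^\dagger x_1 - AU^\dagger x_1 = x_1 - AU^\dagger x_1$, so $x_2 = -AU^\dagger x_1$. But the left side lies in $R(A)^\perp$ and the right side in $R(A)$, forcing both to vanish. Writing $x_1 = Uy$ with $y\in R(U^T)$ yields $U^\dagger x_1 = y$, hence $Ay = 0$, i.e.\ $y\in N(A)\cap R(A^T) = \{0\}$. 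Thus $x_1 = 0$ and $x = 0$.

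\textbf{Step 3 (Part (iii)).} Set $X = U^\dagger (I - VU^\dagger)^{-1}$. First, from $A = (I-VU^\dagger)U$ one immediately gets $(I-VU^\dagger)^{-1}A = U$, so $XA = U^\dagger U = P_{R(A^T)}$, which is symmetric. Second, to compute $AX$ I would observe that $I - VU^\dagger$ leaves $R(U)$ invariant and restricts to the identity on $R(U)^\perp$ (both follow by direct verification using $VU^\dagger = UU^\dagger - AU^\dagger$); the same is therefore true of its inverse. Writing $y = y_1 + y_2\in R(U)\oplus R(U)^\perp$, one checks $AX\,y = (I-VU^\dagger)\,UU^\dagger (I-VU^\dagger)^{-1}(y_1+y_2) = (I-VU^\dagger)(I-VU^\dagger)^{-1}y_1 = y_1 = P_{R(A)}y$, which is symmetric. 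The remaining conditions $AXA = P_{R(A)}A = A$ and $XAX = X P_{R(A)} = X$ (the last equality using $U^\dagger P_{R(A)} = U^\dagger$, since $R(A)^\perp\subseteq N(U^\dagger)$) are then immediate. Equality of the two expressions for $A^\dagger$ follows from $U^\dagger(I - VU^\dagger) = (I - U^\dagger V)U^\dagger$, an identity verified by expanding both sides; invertibility of $I - U^\dagger V$ is obtained by an argument analogous to Step 2.

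\textbf{Main obstacle.} The delicate part is Step 3, specifically showing $AX = P_{R(A)}$ and in particular that $AX$ is symmetric. The clean way is the invariant-subspace argument above; without it, one is tempted into an opaque manipulation of $(I-VU^\dagger)^{-1}$. The rest of the verification is then mechanical once the subspace decomposition is in place.
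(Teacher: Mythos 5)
This statement is quoted in the paper as Theorem 1 of the cited work of Climent, Devesa and Perea, so the paper itself gives no proof; your argument is a correct, self-contained verification along the standard lines (projection identities $UU^{\dagger}=P_{R(A)}$, $U^{\dagger}U=P_{R(A^{T})}$ for (i), injectivity via the decomposition $R(A)\oplus R(A)^{\perp}$ for (ii), and checking the Penrose equations for (iii)). The only loose spot is the justification of $XAX=X$: as written, $XP_{R(A)}=X$ does not follow directly from $U^{\dagger}P_{R(A)}=U^{\dagger}$ unless you first note that $P_{R(U)}$ commutes with $(I-VU^{\dagger})^{-1}$ — which does follow from the invariance of $R(U)$ and $R(U)^{\perp}$ you already established (or, more simply, use $XAX=(XA)X=P_{R(A^{T})}X=U^{\dagger}UU^{\dagger}(I-VU^{\dagger})^{-1}=X$), so the gap is immediately repaired.
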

\begin{theorem}[Theorem $2.2$, \cite{lams1}]\label{2.6}
Let $A = U-V$ be a proper splitting of $A\in\rmn$. Then
\begin{enumerate}
    \item[(i)]  $UU^{\dag} = AA^{\dag}$ and $U^{\dag}U = A^{\dag}A$,
\item[(ii)] $U^{\dag} = (I+A^{\dag}V)^{-1}A^{\dag} = A^{\dag}(I+VA^{\dag})^{-1}$,
\item[(iii)] $U^{\dag}VA^{\dag} = A^{\dag}VU^{\dag}$.
\end{enumerate}
\end{theorem}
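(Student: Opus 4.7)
The plan is to deduce all three parts from Theorem \ref{2.8}, handling them in the order (i), (iii), (ii) because (iii) supplies the key algebraic identity that makes (ii) one line of algebra.

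For (i), I would use the projection characterization of the Moore--Penrose inverse: $UU^\dag$ is the orthogonal projector onto $R(U)$ and $AA^\dag$ is the orthogonal projector onto $R(A)$, so the defining requirement of a proper splitting, $R(A)=R(U)$, forces $UU^\dag=AA^\dag$. The identity $U^\dag U=A^\dag A$ follows by the same argument with $R(U^T)=R(A^T)$, which is the orthogonal-complement form of $N(U)=N(A)$.

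For (iii), the two shapes of $A^\dag$ in Theorem \ref{2.8}(iii) each give one side of the target equation. From $A^\dag=U^\dag(I-VU^\dag)^{-1}$, right-multiplying by $I-VU^\dag$ yields $A^\dag V U^\dag = A^\dag - U^\dag$; from the dual $A^\dag=(I-U^\dag V)^{-1}U^\dag$, left-multiplying by $I-U^\dag V$ gives $U^\dag V A^\dag = A^\dag - U^\dag$. Both products equal the common matrix $A^\dag-U^\dag$, which is exactly (iii).

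For (ii), the identity from (iii) makes $(I+A^\dag V)$ the natural candidate inverse of $(I-U^\dag V)$. I would expand $(I+A^\dag V)(I-U^\dag V) = I - U^\dag V + A^\dag V - A^\dag V U^\dag V$ and then post-multiply the relation $A^\dag V U^\dag = A^\dag - U^\dag$ by $V$ to rewrite the last term as $(A^\dag-U^\dag)V$; the four non-identity terms cancel, giving $(I+A^\dag V)^{-1}=I-U^\dag V$. Hence $(I+A^\dag V)^{-1}A^\dag = (I-U^\dag V)A^\dag = U^\dag$, where the last equality is Theorem \ref{2.8}(iii). The mirror computation $(I+VA^\dag)(I-VU^\dag)=I$ combined with $A^\dag(I-VU^\dag)=U^\dag$ delivers the second formula.

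The only real obstacle is bookkeeping: $U^\dag V$ and $VU^\dag$ are different matrices living on opposite sides, so one has to track carefully which relation is multiplied on the left versus on the right. Once the core identity from (iii) is in hand, the invertibility needed in (ii) comes for free from the two-sided cancellation above, and no additional structural hypothesis on $V$ is required.
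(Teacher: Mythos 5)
Your proof is correct: the projector argument for (i), the extraction of the common identity $U^{\dag}VA^{\dag}=A^{\dag}VU^{\dag}=A^{\dag}-U^{\dag}$ from the two factorizations in Theorem \ref{2.8}(iii), and the cancellation $(I+A^{\dag}V)(I-U^{\dag}V)=I$ (one-sided suffices for square matrices) all check out, and no hypotheses beyond properness are used. Note, however, that the paper offers no proof to compare against --- it quotes this as Theorem 2.2 of the cited reference of Mishra and Sivakumar --- so the only comparison available is with that source, where (ii) is obtained more directly from the factorization $U=A+V=A(I+A^{\dag}V)$ (using $AA^{\dag}V=V$, which follows from $R(V)\subseteq R(A)$) and (iii) then follows from (ii); your route inverts that order, deriving (iii) first from Theorem \ref{2.8} and using it to manufacture the inverse needed in (ii), which is a perfectly sound and arguably more mechanical alternative.
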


Next, we recall the definition of weak proper splitting of the first type and the second type 
\begin{definition}[Definition $2$, \cite{lacdp1}]\label{3.1.1}
A proper splitting $A = U-V$ of $A\in\rmn$ is called a weak proper splitting of the first type (respectively, the second type), if $U^{\dag}V\geq 0$ (respectively, $VU^{\dag}\geq 0$).
\end{definition}
In case of nonsingular matrices, the splittings are defined in Definition \ref{3.1.1}, are called respectively as weak splitting of the first type and weak splitting of the second type (which were respectively  introduced by Marek $\&$ Szyld \cite{lamsz1}), and by Wo{\'z}nicki \cite{lawz1}) and stated in the next definition.

\begin{definition}\label{3.1.13}
A splitting $A = U-V$ of $A\in\rnn$ is called a weak splitting of the first type, if $U^{-1}V \geq 0$ and a weak splitting of the second type, if $VU^{-1} \geq 0$.
\end{definition}
The next result is a combination of Theorem $2$ and Remark $2$ of \cite{lacdp1}.
\begin{theorem}\label{3.1.3}
Let $A = U-V$ be a weak proper splitting of the first type (or second type) of $A\in\rmn$. Then $A^{\dag}V~(\mbox{or}~VA^{\dag})\geq 0$ if and only if $\rho(U^{\dag}V) = \frac{\rho(A^{\dag}V)}{1+\rho(A^{\dag}V)}<1$ (respectively $\rho(VU^{\dag}) = \frac{\rho(VA^{\dag})}{1+\rho(VA^{\dag})}<1$).
\end{theorem}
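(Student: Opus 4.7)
The plan is to handle the first-type case in detail; the second-type claim follows by a perfect left/right mirror argument (use the second halves of the identities in Theorems \ref{2.6} and \ref{2.8}). Write $T := A^{\dag}V$ and $S := U^{\dag}V$. The first preparatory step is to tie $T$ and $S$ together through identities already available: from Theorem \ref{2.6}(ii), right-multiplying by $V$ gives $S = (I+T)^{-1}T$; from Theorem \ref{2.8}(iii), right-multiplying by $V$ gives $T = (I-S)^{-1}S$, where the invertibility of $I-S$ is automatic from the existence of $A^{\dag} = (I-U^{\dag}V)^{-1}U^{\dag}$. These form the Cayley-like pair $s = t/(1+t)$ and $t = s/(1-s)$, which will transfer spectral information back and forth.

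For the forward implication, assume $T \geq 0$. Apply Theorem \ref{2.1} to $T$ to produce a nonnegative eigenvector $x$ with $Tx = \mu x$, where $\mu := \rho(T)$. Substituting into $S = (I+T)^{-1}T$ yields $Sx = \frac{\mu}{1+\mu}x$, and Theorem \ref{2.4}(i) then delivers $\frac{\mu}{1+\mu} \leq \rho(S)$. For the opposite inequality, exploit the weak-splitting hypothesis $S \geq 0$ and apply Theorem \ref{2.1} again to $S$ to obtain a nonnegative $y \neq 0$ with $Sy = \lambda y$, $\lambda := \rho(S)$. Substituting into $T = (I-S)^{-1}S$ (after ruling out $\lambda = 1$, which would contradict invertibility of $I-S$) produces $Ty = \frac{\lambda}{1-\lambda}y$. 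Theorem \ref{2.4}(i) then gives $\frac{\lambda}{1-\lambda} \leq \mu$, i.e., $\lambda \leq \frac{\mu}{1+\mu}$; combined with the previous bound, this yields the claimed equality together with $\rho(S) < 1$.

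The converse is lighter. Given $\rho(S) < 1$ together with $S \geq 0$, expand the resolvent as a Neumann series $(I-S)^{-1} = \sum_{k=0}^{\infty} S^{k} \geq 0$, so $T = (I-S)^{-1}S$ is a product of nonnegative matrices, hence nonnegative.

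The step I expect to be the main obstacle is the one where we must rule out $\lambda > 1$ inside the forward direction, before applying Theorem \ref{2.4}(i). The argument is a sign check: if $\lambda > 1$, the identity $Ty = \frac{\lambda}{1-\lambda}y$ puts a strictly negative scalar in front of a nonzero nonnegative vector, while $Ty$ must be nonnegative because $T \geq 0$; reading off any strictly positive coordinate of $y$ forces $\lambda = 0$, contradicting $\lambda > 1$. This is the one delicate sign-tracking step, and it is precisely what upgrades bare invertibility of $I-S$ into the strict inequality $\rho(S) < 1$.
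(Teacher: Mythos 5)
Your proof is correct, and there is nothing in the paper to compare it against: the paper states this result without proof, quoting it as a combination of Theorem 2 and Remark 2 of \cite{lacdp1}. Your route --- linking $T=A^{\dag}V$ and $S=U^{\dag}V$ via $S=(I+T)^{-1}T$ and $T=(I-S)^{-1}S$ (Theorems \ref{2.6}(ii) and \ref{2.8}(iii)), shuttling Perron eigenvectors in both directions with Theorems \ref{2.1} and \ref{2.4}, and using the Neumann series of $(I-S)^{-1}$ for the converse --- is precisely the standard argument behind the cited result, and all the delicate points (invertibility of $I-S$, excluding $\rho(S)=1$ via that invertibility, excluding $\rho(S)>1$ via nonnegativity of $Ty$) are handled. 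Only a phrasing quibble: in the sign check it is cleaner to say that $Ty\geq 0$ while $\tfrac{\lambda}{1-\lambda}y$ is strictly negative in any coordinate where $y>0$, an immediate contradiction, rather than saying the identity ``forces $\lambda=0$''; the substance is the same.
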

In a special case of the above result (Theorem \ref{3.1.3}), which was proved in \cite{laclip1} is stated in the next theorem.  
\begin{theorem}[Theorem $3$ and Remark $4$ of \cite{laclip1}]\label{3.1.5}
Let $A\in\rnn$ be nonsingular, and let $A = U-V$ be a weak splitting of the first type (respectively, the second type). Then $A^{-1}V \geq 0$ (respectively, $VA^{-1} \geq 0$ ) if and only if $\rho(U^{-1}V) = \frac{\rho(A^{-1}V)}{1+\rho(A^{-1}V)}<1$ (respectively, $\rho(VU^{-1}) = \frac{\rho(VA^{-1})}{1+\rho(VA^{-1})}<1$).
\end{theorem}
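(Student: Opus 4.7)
The plan is twofold. First I would observe that Theorem 3.1.5 is an immediate specialization of Theorem 3.1.3 to the nonsingular case: when $A$ is invertible and $A = U-V$ is a weak splitting of the first or second type (Definition 3.1.13), the matrix $U$ is necessarily invertible, so $N(U) = N(A) = \{0\}$ and $R(U) = R(A) = \mathbb{R}^{n}$, which makes the splitting proper in the Berman-Plemmons sense. Because $U^{\dag} = U^{-1}$ and $A^{\dag} = A^{-1}$, the hypothesis $U^{\dag} V \geq 0$ (respectively $V U^{\dag} \geq 0$) of Theorem 3.1.3 coincides with the one in Definition 3.1.13, and the conclusion of Theorem 3.1.3 specializes to exactly the statement of Theorem 3.1.5.

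For a self-contained proof of the first-type case, I would set $T := U^{-1}V$ and exploit the factorization $A = U(I - T)$. For the direction $(\Leftarrow)$, if $\rho(T) < 1$ then the Neumann series $(I - T)^{-1} = \sum_{k\geq 0} T^{k}$ converges and, since $T \geq 0$, inherits nonnegativity; hence $A^{-1}V = (I - T)^{-1} T \geq 0$ as a product of nonnegative matrices.

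For the harder direction $(\Rightarrow)$, I would invoke Perron-Frobenius (Theorem 2.1) twice. Given a nonnegative eigenvector $x$ of $A^{-1}V$ with $A^{-1}Vx = \nu x$ and $\nu := \rho(A^{-1}V) \geq 0$, the identity $Vx = \nu(U-V)x$ rearranges to $U^{-1}Vx = \tfrac{\nu}{1+\nu}\, x$, yielding $\rho(T) \geq \tfrac{\nu}{1+\nu}$. Applying Perron-Frobenius a second time to $T$ produces $y \geq 0$, $y \neq 0$ with $Ty = \mu y$ for $\mu := \rho(T)$, which gives $(1-\mu)Vy = \mu A y$. The main obstacle is to rule out $\mu \geq 1$: if $\mu = 1$ then $Ay = 0$, contradicting the nonsingularity of $A$; if $\mu > 1$ then dividing gives $A^{-1}Vy = \tfrac{\mu}{1-\mu}\, y$, whose left-hand side is nonnegative while the right-hand side has a strictly negative entry because $\tfrac{\mu}{1-\mu} < 0$ and $y \neq 0$. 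Once $\mu < 1$ is secured, $\tfrac{\mu}{1-\mu}$ is a nonnegative eigenvalue of $A^{-1}V$ with nonnegative eigenvector, so $\nu \geq \tfrac{\mu}{1-\mu}$; combining with $\mu \geq \tfrac{\nu}{1+\nu}$ forces $\mu = \tfrac{\nu}{1+\nu} < 1$. The second-type case follows either by the symmetric calculation using $A = (I - VU^{-1})U$, or by transposing and applying the first-type case to $A^{T} = U^{T} - V^{T}$, noting that $(VA^{-1})^{T} = (A^{T})^{-1} V^{T}$ and $\rho(X^{T}) = \rho(X)$.
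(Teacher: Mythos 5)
Your proposal is correct, but it should be noted that the paper itself offers no proof of this statement: Theorem \ref{3.1.5} is quoted as a known result (Theorem 3 and Remark 4 of the cited Climent--Perea paper), and the paper's only comment on it is precisely your first observation, namely that it is the nonsingular special case of Theorem \ref{3.1.3}. So the specialization paragraph of your proposal reproduces the paper's stance exactly (with the harmless caveat that invertibility of $U$ is already presupposed by the notation $U^{-1}V$ in Definition \ref{3.1.13}, so the splitting is trivially proper and $U^{\dag}=U^{-1}$, $A^{\dag}=A^{-1}$). What you add beyond the paper is a complete self-contained argument, and it checks out: the direction from $\rho(U^{-1}V)<1$ to $A^{-1}V=(I-T)^{-1}T\geq 0$ via the Neumann series is sound; in the converse direction, the two applications of Theorem \ref{2.1} give $\rho(T)\geq \nu/(1+\nu)$ from the eigenvector of $A^{-1}V$ and $(1-\mu)Vy=\mu Ay$ from the eigenvector of $T$, the exclusion of $\mu=1$ (else $Ay=0$, $y\neq 0$) and of $\mu>1$ (else $A^{-1}Vy=\tfrac{\mu}{1-\mu}y$ has a negative entry against $A^{-1}Vy\geq 0$) is valid, and the two inequalities $\mu\geq \nu/(1+\nu)$ and $\nu\geq \mu/(1-\mu)$ do force $\mu=\nu/(1+\nu)<1$, which is the full equality claimed, not merely $\rho(T)<1$. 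The transposition argument for the second type, using $(VA^{-1})^{T}=(A^{T})^{-1}V^{T}$ and $\rho(X^{T})=\rho(X)$, is also correct. In short: same positioning as the paper, plus an independent elementary proof that the paper delegates to its reference.
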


Further, we recall one comparison theorem of  \cite{lagiri2017} for two weak splittings of the second type.

\begin{theorem}[Corollary $3.13$, \cite{lagiri2017}]\label{giricor}
Let $A=M_1-N_1=M_2-N_2$ be two weak splittings of the second type of $A\in\rnn$ with $M_{i}A^{-1}\geq 0$, $i=1,2$. If there exist index $j\geq 1$ and $\alpha~(0<\alpha<1)$, such
that $(M_1A^{-1})^j\leq \alpha(M_2A^{-1})^j$, then $\rho(N_1M_1^{-1})<\rho(N_2M_2^{-1})$.
\end{theorem}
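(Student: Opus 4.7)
The plan is to reduce the stated comparison of $\rho(N_1M_1^{-1})$ and $\rho(N_2M_2^{-1})$ to a comparison of the spectral radii of the nonnegative matrices $P_i := M_iA^{-1}$, and then to exploit the hypothesis $P_1^j \leq \alpha P_2^j$ via monotonicity of the spectral radius under the entrywise order.

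The first key step is to observe that $N_iA^{-1}\geq 0$ for $i=1,2$. Indeed, because the splittings are of the second type we have $N_iM_i^{-1}\geq 0$, and combined with the hypothesis $M_iA^{-1}\geq 0$ the product identity $N_iA^{-1} = (N_iM_i^{-1})(M_iA^{-1})$ forces $N_iA^{-1}\geq 0$. Once this is in hand, Theorem \ref{3.1.5} becomes applicable and yields
\begin{equation*}
\rho(N_iM_i^{-1}) \;=\; \frac{\rho(N_iA^{-1})}{1+\rho(N_iA^{-1})},\qquad i=1,2.
\end{equation*}
Since $t\mapsto t/(1+t)$ is strictly increasing on $[0,\infty)$, it suffices to prove $\rho(N_1A^{-1}) < \rho(N_2A^{-1})$.

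To pass from this to a statement about $P_i$, I would rewrite $N_iA^{-1} = P_i - I$ and use the elementary fact that for any nonnegative matrix $C$ one has $\rho(I+C) = 1+\rho(C)$, which follows at once from Theorem \ref{2.1} (the Perron root of $C$ shifts by $1$ while every other spectral value satisfies $|1+\lambda|\leq 1+|\lambda|\leq 1+\rho(C)$). Applied with $C = N_iA^{-1}\geq 0$, this gives $\rho(P_i) = 1+\rho(N_iA^{-1})$, so the strict inequality I need is equivalent to $\rho(P_1) < \rho(P_2)$. Finally, from $0\leq P_1^j\leq\alpha P_2^j$ and Theorem \ref{2.25},
\begin{equation*}
\rho(P_1)^j = \rho(P_1^j) \;\leq\; \alpha\,\rho(P_2^j) = \alpha\,\rho(P_2)^j,
\end{equation*}
hence $\rho(P_1)\leq \alpha^{1/j}\rho(P_2)$. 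Since $P_2 = M_2A^{-1}$ is nonsingular, $0$ is not an eigenvalue of $P_2$ and therefore $\rho(P_2)>0$; together with $0<\alpha<1$ this yields $\rho(P_1)<\rho(P_2)$, which unwinds through the two equivalences above to the desired $\rho(N_1M_1^{-1})<\rho(N_2M_2^{-1})$.

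The single genuinely nontrivial move is recognising that the hypothesis $M_iA^{-1}\geq 0$, together with the second-type weak-splitting property $N_iM_i^{-1}\geq 0$, can be combined multiplicatively to give $N_iA^{-1}\geq 0$; without that observation, Theorem \ref{3.1.5} would not apply and the bridging identity $\rho(P_i)=1+\rho(N_iA^{-1})$ would be unavailable. Everything else is a routine application of Perron--Frobenius monotonicity via Theorems \ref{2.1} and \ref{2.25}.
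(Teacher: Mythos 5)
Your proof is correct, but note that the paper itself never proves this statement: it is quoted in the preliminaries as Corollary 3.13 of \cite{lagiri2017}, so there is no in-paper argument to compare against. Judged on its own, your argument is complete and uses only tools already recalled in the paper: the factorization $N_iA^{-1}=(N_iM_i^{-1})(M_iA^{-1})\geq 0$ is exactly what legitimises Theorem \ref{3.1.5}; the identity $M_iA^{-1}=I+N_iA^{-1}$ together with Perron--Frobenius (Theorem \ref{2.1}) gives $\rho(M_iA^{-1})=1+\rho(N_iA^{-1})$; and Theorem \ref{2.25} applied to $0\leq (M_1A^{-1})^j\leq\alpha(M_2A^{-1})^j$ yields $\rho(M_1A^{-1})\leq\alpha^{1/j}\rho(M_2A^{-1})<\rho(M_2A^{-1})$, the strictness being safe since $\rho(M_2A^{-1})\geq 1>0$ (your nonsingularity argument works equally well). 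Unwinding through the shift identity and the strict monotonicity of $t\mapsto t/(1+t)$ then gives $\rho(N_1M_1^{-1})<\rho(N_2M_2^{-1})$ as claimed. This is essentially the same mechanism as in the cited source, which also reduces the comparison to $\rho(M_1A^{-1})$ versus $\rho(M_2A^{-1})$ and translates back via the relation $\rho(N_iM_i^{-1})=1-1/\rho(M_iA^{-1})$; your only cosmetic difference is routing that translation through $\rho(N_iA^{-1})$ and Theorem \ref{3.1.5} instead, which is equivalent because $\rho(M_iA^{-1})=1+\rho(N_iA^{-1})$.
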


The notion of double splitting was first introduced by Wo{\'z}nicki \cite{lawoz1} in $1993$. Later, several characterizations of double splitting were investigated by many researchers (once can refer \cite{lamio2009}, \cite{lasheh1}, and \cite{lashts07}). In addition to these, Song and Song \cite{lasons1} introduced the double nonnegative splitting to discuss the iterative solution of the nonsingular system $Ax = b$. Further, the comparison results of  \cite{lasons1} have been  extended by the authors of \cite{lalic2014}, \cite{lalicx2014}, and \cite{lamiao2012}. For convenience, we have renamed the double nonnegative splitting as the double weak splitting of type I. Hence the Definition $1.3$ of \cite{lasons1} is restated as follows.
\begin{definition}\label{w1.1}
The splitting $A = P-R+S$ is called double weak splitting of type I of  a nonsingular matrix $A\in\mathbb{R}^{n\times{n}}$ if $P^{-1}R\geq 0$ and $-P^{-1}S\geq 0$.
\end{definition}

If $A= P-R+S$  be a double weak splitting of type I of a nonsingular matrix $A\in\rnn$, then the iterative solution to the $Ax = b$, can be easily obtained by the following iterative scheme
\begin{equation*}
x^{k+1} = P^{-1}R x^{k}-P^{-1}S x^{k-1}+P^{-1}b.   
\end{equation*}
Further, its block matrix representation is given by 
\begin{equation}\label{eq22.1}
\begin{pmatrix}
    x^{k+1} \\
    x^{k}
  \end{pmatrix}
  = \begin{pmatrix}
    P^{-1}R & -P^{-1}S \\
    I & 0
  \end{pmatrix} \begin{pmatrix}
    x^{k} \\
    x^{k-1}
  \end{pmatrix}
  +  \begin{pmatrix}
    P^{-1}b \\
    0
  \end{pmatrix}=  \widehat{\mathbf{W}}\begin{pmatrix}
    x^{k} \\
    x^{k-1}
  \end{pmatrix}
  +  \begin{pmatrix}
    P^{-1}b \\
    0
  \end{pmatrix},
\end{equation}
where $I$ is an identity matrix of order $n$ and the iteration matrix $\widehat{\mathbf{W}}$ is, 
\begin{equation*}\label{wc2.1}
     \widehat{\mathbf{W}} = \begin{pmatrix}
    P^{-1}R & -P^{-1}S \\
    I & 0 
    \end{pmatrix}.
\end{equation*}
The convergence of the above iterative scheme which was proved by Song  and Song \cite{lasons1}, is given in the following theorem.

\begin{theorem}[\cite{lasons1}]\label{d2.21}
Let $A= P-R+S$  be a double weak splitting of type I of a nonsingular matrix $A\in\rnn$. Then the iterative scheme defined in equation \eqref{eq22.1} converges to $A^{-1}b$ if and only if $\rho(\widehat{\mathbf{W}})<1$.
\end{theorem}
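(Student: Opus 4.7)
The plan is to recast the two-step iteration \eqref{eq22.1} as a one-step stationary linear iteration on $\mathbb{R}^{2n}$ and then invoke the classical spectral radius criterion. Setting $y^{k} = \begin{pmatrix} x^{k} \\ x^{k-1} \end{pmatrix}$ and $\hat{c} = \begin{pmatrix} P^{-1}b \\ 0 \end{pmatrix}$, the scheme reads $y^{k+1} = \widehat{\mathbf{W}} y^{k} + \hat{c}$. By the standard theorem on stationary iterations (e.g., the argument used to prove convergence of the scheme \eqref{eq113}), such an iteration converges for every initial vector $y^{0}$ if and only if $\rho(\widehat{\mathbf{W}})<1$, and in that case the limit is the unique fixed point $y^{\ast} = (I-\widehat{\mathbf{W}})^{-1}\hat{c}$. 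This takes care of both implications of the equivalence.

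Next, I would identify what this limit is. If $x^{k}\to x^{\ast}$, then automatically $x^{k-1}\to x^{\ast}$ as well, so passing to the limit in $x^{k+1} = P^{-1}R x^{k} - P^{-1}S x^{k-1} + P^{-1}b$ gives $P x^{\ast} = (R-S)x^{\ast} + b$, i.e.\ $(P-R+S)x^{\ast} = A x^{\ast} = b$. Since $A$ is nonsingular, $x^{\ast} = A^{-1}b$, which shows that convergence of the first block of $y^{k}$ really produces the desired solution.

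For completeness I would also check that the fixed-point equation $(I-\widehat{\mathbf{W}})y = \hat{c}$ is consistently solvable by $y^{\ast} = \begin{pmatrix} A^{-1}b \\ A^{-1}b \end{pmatrix}$. A quick Schur-complement computation gives
\begin{equation*}
\det(I-\widehat{\mathbf{W}}) = \det\!\begin{pmatrix} I-P^{-1}R & P^{-1}S \\ -I & I \end{pmatrix} = \det\bigl(I - P^{-1}R + P^{-1}S\bigr) = \det(P^{-1}A) \neq 0,
\end{equation*}
so the fixed point is unique and agrees with $\begin{pmatrix} A^{-1}b \\ A^{-1}b \end{pmatrix}$ as required.

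There is no real obstacle here: the proof is essentially the observation that a two-step iteration can be rewritten as a one-step iteration on a doubled state space, after which the classical convergence theory applies verbatim. The only mildly technical point is the Schur determinant identity confirming that $I-\widehat{\mathbf{W}}$ is invertible; everything else reduces to taking limits in the defining recursion and using that $A = P - R + S$ is nonsingular.
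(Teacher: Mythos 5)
Your proposal is correct and follows essentially the same route as the source of this theorem (the paper itself only quotes it from Song and Song, and the standard proof there, as in the Golub--Varga framework invoked for the regularized scheme, is exactly this reduction of the two-step iteration to a one-step stationary iteration on $\mathbb{R}^{2n}$ with iteration matrix $\widehat{\mathbf{W}}$). Your Schur-complement identity $\det(I-\widehat{\mathbf{W}})=\det(P^{-1}A)\neq 0$ and the passage to the limit in the recursion correctly identify the fixed point as $\bigl(A^{-1}b,\,A^{-1}b\bigr)$, so nothing is missing.
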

Further, an equivalent characterization for a double weak splitting of type I, is stated  below. 

\begin{theorem}[Theorem $2.4$, \cite{lasons1}]\label{weaktp1.2}
Let $A= P-R+S$  be a double weak splitting of type I of a nonsingular matrix $A\in\rnn$. Then the following conditions are equivalent:
\begin{enumerate}
    \item[(i)] $\rho(\widehat{\mathbf{W}})<1$.
    \item[(ii)] $\rho(P^{-1}(R-S))<1$.
     \item[(iii)] $A^{-1}P\geq 0$.
      \item[(iv)] $A^{-1}P\geq I$.
\end{enumerate}
\end{theorem}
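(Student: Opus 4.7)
My plan is to prove the cycle $(i) \Leftrightarrow (ii) \Rightarrow (iv) \Rightarrow (iii) \Rightarrow (ii)$, leveraging the two nonnegativity facts
$$ \widehat{\mathbf{W}} \geq 0, \qquad P^{-1}(R-S) \;=\; P^{-1}R + (-P^{-1}S) \;\geq\; 0, $$
that come directly from Definition \ref{w1.1}, together with the algebraic identities $P^{-1}A = I - P^{-1}(R-S)$ and $A^{-1}P = (I - P^{-1}(R-S))^{-1}$. Nonsingularity of $A$ (and $P$) forces $1$ not to be an eigenvalue of $P^{-1}(R-S)$, and by a block-determinant computation $\det(\widehat{\mathbf{W}} - \lambda I) = \det(\lambda^2 I - \lambda P^{-1}R + P^{-1}S)$ the same is true for $\widehat{\mathbf{W}}$; this eliminates the boundary value $1$ throughout.

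For $(i) \Leftrightarrow (ii)$, the idea is to transfer Perron eigenvectors (supplied by Theorem \ref{2.1}) between $\widehat{\mathbf{W}}$ and $P^{-1}(R-S)$. For $(i) \Rightarrow (ii)$, apply $\widehat{\mathbf{W}}$ to the block vector $(v, v)^T$, where $v$ is a Perron eigenvector of $P^{-1}(R-S)$ with eigenvalue $\rho_1$; a direct calculation gives $\widehat{\mathbf{W}}(v, v)^T = (\rho_1 v, v)^T$, and if $\rho_1 \geq 1$ this componentwise dominates $(v, v)^T$, so Theorem \ref{2.4}(i) would force $\rho(\widehat{\mathbf{W}}) \geq 1$, contradicting (i). Conversely, for $(ii) \Rightarrow (i)$, take a Perron eigenvector $(u_1, u_2)^T \geq 0$ of $\widehat{\mathbf{W}}$ with eigenvalue $\rho_2$; the bottom block yields $u_1 = \rho_2 u_2$, and substituting into the top block gives $(\rho_2 P^{-1}R - P^{-1}S)u_2 = \rho_2^2 u_2$. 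If $\rho_2 > 1$, then $-\tfrac{1}{\rho_2}P^{-1}S \leq -P^{-1}S$, so after dividing by $\rho_2$ one obtains $\rho_2 u_2 \leq P^{-1}(R-S) u_2$, which by Theorem \ref{2.4}(i) contradicts (ii); the boundary case $\rho_2 = 1$ is excluded by the eigenvalue remark, and $\rho_2 = 0$ is trivial.

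The remaining three implications are much shorter. For $(ii) \Rightarrow (iv)$, expand the Neumann series $A^{-1}P = \sum_{k \geq 0} (P^{-1}(R-S))^k$; every term is nonnegative and the $k = 0$ term is $I$, whence $A^{-1}P \geq I$. The implication $(iv) \Rightarrow (iii)$ is immediate from $I \geq 0$. For $(iii) \Rightarrow (ii)$, apply $A^{-1}P = (I - P^{-1}(R-S))^{-1}$ to a Perron eigenvector $v \geq 0$ of $P^{-1}(R-S)$ with eigenvalue $\rho_1$, obtaining $A^{-1}P v = (1 - \rho_1)^{-1} v$; if $\rho_1 > 1$ the scalar is negative, contradicting $A^{-1}P v \geq 0$ on any coordinate where $v$ is strictly positive. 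The principal obstacle is the equivalence $(i) \Leftrightarrow (ii)$: because $\widehat{\mathbf{W}}$ acts on $\mathbb{R}^{2n}$ while $P^{-1}(R-S)$ acts on $\mathbb{R}^{n}$, spectral radii cannot be compared directly, and the bridge must be built through the carefully chosen trial vectors $(v, v)^T$ and $(\rho_2 u_2, u_2)^T$ in the two directions.
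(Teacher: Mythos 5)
Your argument is correct, but note that the paper does not prove this statement at all: it is quoted verbatim as Theorem $2.4$ of Song and Song \cite{lasons1}, so there is no in-paper proof to compare against. Your reconstruction is a legitimate self-contained derivation and follows the standard route: the identity $P^{-1}A = I - P^{-1}(R-S)$ turns the double splitting into the induced single weak splitting $A = P - (R-S)$ of the first type, for which the equivalences (ii)$\Leftrightarrow$(iii)$\Leftrightarrow$(iv) are the classical monotonicity characterizations (in the literature these are usually invoked from results such as Lemma $2.7$ of \cite{lasong2002} rather than reproved via the Neumann series and the Perron eigenvector as you do), while (i)$\Leftrightarrow$(ii) is the content of Song--Song's Theorem $2.2$ (recalled in the paper as Theorem \ref{song1.1}), which you reprove directly with the trial vectors $(v,v)^{T}$ and $(\rho_2 u_2,u_2)^{T}$ together with Theorem \ref{2.4}. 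All the delicate points are handled: nonsingularity of $A$ excludes the eigenvalue $1$ for both $P^{-1}(R-S)$ and, via $\det(\widehat{\mathbf{W}}-\lambda I)=\det(\lambda^{2}I-\lambda P^{-1}R+P^{-1}S)$, for $\widehat{\mathbf{W}}$; the case $\rho(\widehat{\mathbf{W}})>1$ genuinely needs the scaling inequality $-\tfrac{1}{\rho_2}P^{-1}S\le -P^{-1}S$, which you state; and $u_2\neq 0$ follows from the bottom block. So the proposal is sound; its only difference from the source proof is that it is more elementary and self-contained, at the cost of redoing single-splitting facts that the original authors simply cite.
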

 Followed by the remarkable work of Neumann \cite{laneum1}, Jena et al. \cite{lajmp1} introduced double proper splitting as follows.
 
A decomposition $A = P-R+S$ of $A\in\rmn$ is called \textit{double proper splitting} if $R(A) = R(P)$ and $N(A) = N(P)$.

Applying the double proper splitting $A = P-R+S$ to the system (\ref{laeq1}), we get the following iterative scheme. 

\begin{equation*}
 x^{k+1} = P^{\dag}Rx^{k}-P^{\dag}Sx^{k-1}+P^{\dag}b, ~~k>0.
 \end{equation*}

Further, its block matrix form is given by 
\begin{equation}\label{eq7}
\begin{pmatrix}
    x^{k+1} \\
    x^{k}
  \end{pmatrix}
  = \begin{pmatrix}
    P^{\dag}R & -P^{\dag}S \\
    I & 0
  \end{pmatrix} \begin{pmatrix}
    x^{k} \\
    x^{k-1}
  \end{pmatrix}
  +  \begin{pmatrix}
    P^{\dag}b \\
    0
  \end{pmatrix}.
\end{equation}
The authors of \cite{lajmp1} have proved that the iterative scheme (\ref{eq7}) converges to the unique least square solution $A^{\dag}b$ of (\ref{laeq1}) if the spectral radius of the iteration matrix
\begin{equation}\label{eq8}
 \mathbf{W} = \begin{pmatrix}
    P^{\dag}R & -P^{\dag}S \\
    I & 0   
\end{pmatrix}
\end{equation}
is less than one, i.e., $\rho(\mathbf{W})<1$. More on the convergence of the scheme (\ref{eq7}) concerning different types of splittings and its comparison analysis can be found in \cite{lajmp1},\cite{ladm1}, and  \cite{lawang14}. In addition to these, Mishra \cite{ladm1} introduced the double proper nonnegative splitting which we renamed as the double proper  weak splitting of type I and defined as follows.

\begin{definition}\label{2.3}
A decomposition $A = P-R+S$ is called a double proper weak splitting of type I if $R(A) = R(P)$, $N(A) = N(P)$, $P^{\dag}R\geq 0$ and $P^{\dag}S\leq 0$.
\end{definition}

The convergence of double proper weak splitting have proved by Mishra  \cite{ladm1} stated below.
\begin{theorem}[Theorem $4.5$, \cite{ladm1}]\label{2.1.10}
Let $A^{\dag}P\geq 0$. If $A = P-R+S$ is a double proper  weak splitting of type I of $A\in\rmn$, then $\rho(\mathbf{W})<1.$
\end{theorem}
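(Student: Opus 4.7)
The strategy is to reduce matters to the ordinary proper splitting $A = P - (R-S)$, deduce that its weak-splitting spectral radius is below $1$, and then transfer this bound to $\mathbf{W}$ via a Perron--Frobenius comparison.

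First, since $P^{\dag}R \geq 0$ and $-P^{\dag}S \geq 0$, their sum $P^{\dag}(R-S)$ is nonnegative, and the equalities $R(A)=R(P)$, $N(A)=N(P)$ say that $A = P-(R-S)$ is a weak proper splitting of the first type. By Theorem \ref{3.1.3}, $\rho(P^{\dag}(R-S)) < 1$ is equivalent to $A^{\dag}(R-S) \geq 0$, so the central task is to extract this nonnegativity from the single hypothesis $A^{\dag}P \geq 0$. I would use the identity $A^{\dag}P P^{\dag} = A^{\dag}$: indeed $PP^{\dag} = P_{R(P)} = P_{R(A)}$, and since $N(A^{\dag})=N(A^T)=R(A)^{\perp}$, $A^{\dag}$ is unchanged when composed with $P_{R(A)}$. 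Then $A^{\dag}R = (A^{\dag}P)(P^{\dag}R) \geq 0$ as a product of two nonnegative matrices, and likewise $-A^{\dag}S = (A^{\dag}P)(-P^{\dag}S) \geq 0$, so $A^{\dag}(R-S) = A^{\dag}R + (-A^{\dag}S) \geq 0$. Theorem \ref{3.1.3} now delivers $\rho(P^{\dag}(R-S)) < 1$.

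Next, I would attack $\mathbf{W}$ directly. Its block structure, together with $P^{\dag}R \geq 0$ and $-P^{\dag}S \geq 0$, makes $\mathbf{W} \geq 0$, so by Theorem \ref{2.1}, $\rho(\mathbf{W})$ itself is an eigenvalue of $\mathbf{W}$ with a nonnegative eigenvector $(u^T, v^T)^T$. Suppose for contradiction that $\lambda := \rho(\mathbf{W}) \geq 1$. The second block of the eigenvalue equation gives $u = \lambda v$, so $u \geq v \geq 0$ and $u \neq 0$ (otherwise the eigenvector would be zero). The first block reads $(P^{\dag}R)u + (-P^{\dag}S)v = \lambda u$; using $v \leq u$ and the nonnegativity of $-P^{\dag}S$, this yields $\lambda u \leq (P^{\dag}R - P^{\dag}S)u = P^{\dag}(R-S)u$. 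Theorem \ref{2.4}(i) then forces $\lambda \leq \rho(P^{\dag}(R-S)) < 1$, contradicting $\lambda \geq 1$. Hence $\rho(\mathbf{W}) < 1$.

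The main obstacle I anticipate is spotting the identity $A^{\dag}PP^{\dag} = A^{\dag}$ as the bridge that turns the single hypothesis $A^{\dag}P \geq 0$ into the componentwise statement $A^{\dag}(R-S) \geq 0$ (a direct algebraic expansion $A^{\dag}(R-S) = A^{\dag}P - A^{\dag}A$ stalls, because the projection $A^{\dag}A = P_{R(A^T)}$ need not be entrywise nonnegative). Once that identity is in hand, Theorem \ref{3.1.3} and the Perron-type comparison through Theorem \ref{2.4}(i) slot in with no further effort.
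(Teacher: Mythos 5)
Your proposal is correct. Note that the paper itself gives no proof of this statement: it is quoted verbatim as a preliminary (Theorem 4.5 of Mishra \cite{ladm1}), so there is no internal argument to compare against; your write-up is a valid self-contained reconstruction using only the tools the paper does quote. The two key steps both check out: the identity $A^{\dag}PP^{\dag}=A^{\dag}$ (valid because $PP^{\dag}=P_{R(P)}=P_{R(A)}=AA^{\dag}$ and $A^{\dag}AA^{\dag}=A^{\dag}$) legitimately converts $A^{\dag}P\geq 0$ into $A^{\dag}R=(A^{\dag}P)(P^{\dag}R)\geq 0$ and $-A^{\dag}S=(A^{\dag}P)(-P^{\dag}S)\geq 0$, hence $A^{\dag}(R-S)\geq 0$, and Theorem \ref{3.1.3} applied to the weak proper splitting $A=P-(R-S)$ of the first type then gives $\rho(P^{\dag}(R-S))<1$; and your Perron--Frobenius comparison on $\mathbf{W}\geq 0$ (eigenvector $(u^{T},v^{T})^{T}\geq 0$, $u=\rho(\mathbf{W})v$, so under $\rho(\mathbf{W})\geq 1$ one gets $\rho(\mathbf{W})u\leq P^{\dag}(R-S)u$ and Theorem \ref{2.4}(i) forces $\rho(\mathbf{W})\leq\rho(P^{\dag}(R-S))<1$, a contradiction) is sound, since all hypotheses of Theorem \ref{2.4}(i) are met ($u\geq 0$, $u\neq 0$, $\rho(\mathbf{W})>0$). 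This reduction of the double splitting to the associated single proper splitting $A=P-(R-S)$ is also the standard route in the cited literature, so your argument is essentially the expected one rather than a genuinely different method; what your version adds is an explicit, elementary bridge ($A^{\dag}PP^{\dag}=A^{\dag}$) avoiding any appeal to the equivalences of Theorem \ref{weaktp1.2} in the rectangular setting.
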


At the end of this section, we collect a few results based on the existence and the convergence of regularized splitting. 
\begin{theorem}[Lemma $4.2$, \cite{labarh1}]\label{2.1.7}
For all $A\in\rmn$, $$\lim_{\lambda\to 0} (A^{T}A+\lambda I)^{-1}A^{T} = \lim_{\lambda\to 0}B_{\lambda}^{-1}A^{T} \mbox{ exists}.$$ 
\end{theorem}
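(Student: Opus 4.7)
The natural route is to reduce to a diagonal calculation via the singular value decomposition. My plan is to write $A = U\Sigma V^T$ with orthogonal $U\in\mathbb{R}^{m\times m}$, $V\in\mathbb{R}^{n\times n}$, and $\Sigma\in\mathbb{R}^{m\times n}$ whose nonzero diagonal entries are the singular values $\sigma_1\ge\cdots\ge\sigma_r>0$ of $A$ (with $r=\operatorname{rank}(A)$). Then $A^TA = V(\Sigma^T\Sigma)V^T$, and since $\Sigma^T\Sigma$ is an $n\times n$ diagonal matrix with nonnegative entries, $\Sigma^T\Sigma+\lambda I$ is positive definite for every $\lambda>0$. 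Consequently $B_\lambda = A^TA+\lambda I$ is invertible for all $\lambda>0$, justifying the expressions in the statement and yielding
\begin{equation*}
B_\lambda^{-1}A^T \;=\; V(\Sigma^T\Sigma+\lambda I)^{-1}V^T\cdot V\Sigma^T U^T \;=\; V\,D_\lambda\,U^T,
\end{equation*}
where $D_\lambda := (\Sigma^T\Sigma+\lambda I)^{-1}\Sigma^T \in\mathbb{R}^{n\times m}$ is diagonal.

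Next I would evaluate $D_\lambda$ entrywise: for $1\le i\le \min(m,n)$ its $(i,i)$ entry is $\sigma_i/(\sigma_i^2+\lambda)$, and all off-diagonal entries vanish. Taking $\lambda\to 0^+$,
\begin{equation*}
\lim_{\lambda\to 0}\frac{\sigma_i}{\sigma_i^2+\lambda}=
\begin{cases}
1/\sigma_i, & \sigma_i>0,\\
0, & \sigma_i=0.
\end{cases}
\end{equation*}
Hence $D_\lambda\to \Sigma^\dagger$ componentwise, and since matrix multiplication by the fixed orthogonal factors $V$ and $U^T$ is continuous, $B_\lambda^{-1}A^T\to V\Sigma^\dagger U^T$.

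Finally, I would identify the limit using the SVD characterization $A^\dagger = V\Sigma^\dagger U^T$, so that $\lim_{\lambda\to 0}B_\lambda^{-1}A^T = A^\dagger$, which exists. This also corroborates the remark in the introduction that $B_\lambda^{-1}A^Tb\to A^\dagger b$.

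I do not anticipate any real obstacle: the only subtlety is being careful with the rectangular shape of $\Sigma$ so that $\Sigma^T\Sigma$ is the correct $n\times n$ matrix (rather than $m\times m$), and making sure the $i>r$ components limit to $0$ rather than blowing up. Both are handled transparently by the entrywise computation above, so the argument is essentially a continuous-functional-calculus statement for the symmetric positive semidefinite matrix $A^TA$.
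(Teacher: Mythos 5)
Your argument is correct and complete, but note that the paper itself offers no proof of this statement: it is imported verbatim as Lemma 4.2 of Barata and Hussein \cite{labarh1}, so there is no in-paper argument to match against. Compared with that cited source, your route differs in presentation: Barata and Hussein use the spectral (eigenprojection) decomposition of the self-adjoint matrix $A^{T}A=\sum_k\alpha_k E_k$, observe that the projector $E_{k_0}$ attached to the eigenvalue $\alpha_{k_0}=0$ satisfies $E_{k_0}A^{T}=(AE_{k_0})^{T}=0$ because $N(A^{T}A)=N(A)$, so the potentially divergent term $\frac{1}{\alpha_{k_0}+\lambda}E_{k_0}A^{T}$ vanishes identically; this yields existence of the limit (Theorem \ref{2.1.7}), and the identification of the limit with $A^{\dagger}$ is carried out separately (their Theorem 4.3, i.e.\ Theorem \ref{2.1.8} here). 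Your SVD computation reduces to exactly the same scalar limit $\sigma/(\sigma^{2}+\lambda)\to 1/\sigma$ (or $0$ when $\sigma=0$), but it buys both statements at once, since $V\Sigma^{\dagger}U^{T}=A^{\dagger}$ is immediate from the decomposition, and your care with the rectangular shape of $\Sigma$ (so that $\Sigma^{T}\Sigma$ is $n\times n$) is exactly the point that needs watching. The only cosmetic remark is that the limit should be read as $\lambda\to 0^{+}$, consistent with the standing assumption $\lambda>0$ under which $B_{\lambda}$ is guaranteed invertible.
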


\begin{theorem}[Theorem $4.3$, \cite{labarh1}]\label{2.1.8}
For all $A\in\rmn$,
$$\lim_{\lambda\to 0} (A^{T}A+\lambda I)^{-1}A^{T} = A^{\dag} = \lim_{\lambda\to 0}B_{\lambda}^{-1}A^{T}.$$
\end{theorem}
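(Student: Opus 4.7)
The plan is to prove both equalities via the singular value decomposition of $A$, which reduces the claim to an entrywise scalar computation. Since Theorem \ref{2.1.7} has already secured existence of the limit, only identification with $A^\dagger$ remains.

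First I would write $A = U\Sigma V^T$ where $U\in\mathbb{R}^{m\times m}$ and $V\in\mathbb{R}^{n\times n}$ are orthogonal and $\Sigma\in\rmn$ carries the positive singular values $\sigma_1,\ldots,\sigma_r$ on its leading diagonal, with $r$ the rank of $A$. A direct substitution gives $A^TA = V\Sigma^T\Sigma V^T$, so
\[
B_\lambda \;=\; V(\Sigma^T\Sigma + \lambda I)V^T \quad \text{and} \quad B_\lambda^{-1}A^T \;=\; V\,(\Sigma^T\Sigma+\lambda I)^{-1}\Sigma^T\, U^T.
\]
The rectangular diagonal matrix $D_\lambda := (\Sigma^T\Sigma+\lambda I)^{-1}\Sigma^T \in \mathbb{R}^{n\times m}$ then has entry $\sigma_i/(\sigma_i^2+\lambda)$ at position $(i,i)$ for $i\leq r$ and zeros elsewhere (the indices $i>r$ contribute $0/\lambda=0$ because $\Sigma^T$ already vanishes there).

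Next I would let $\lambda\to 0^+$. For each $i\leq r$, the nonzero entry $\sigma_i/(\sigma_i^2+\lambda)$ converges to $1/\sigma_i$, while the vanishing entries remain $0$; hence $D_\lambda\to\Sigma^\dagger$ entrywise, yielding $B_\lambda^{-1}A^T\to V\Sigma^\dagger U^T$. Since $V\Sigma^\dagger U^T$ is the standard closed form for the Moore--Penrose inverse (confirmed by checking the four defining equations once), both claimed equalities follow. The symbol $B_\lambda^{-1}$ appearing in the second equality is unambiguous because $A^TA+\lambda I$ is strictly positive definite for $\lambda>0$, so the two expressions in the statement are literally the same sequence in $\lambda$.

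The main obstacle, modest as it is, is tracking the rectangular shapes of $\Sigma$ and $\Sigma^T$ and confirming that the zero-singular-value block of the SVD does not contribute in the limit; this is exactly where regularization plays its role, since $\lambda$ prevents a division by zero at positive $\lambda$ while those components are annihilated for free by $\Sigma^T$ before the limit is taken. If one preferred to avoid the SVD entirely, an alternative plan would be to let $L=\lim_{\lambda\to 0}B_\lambda^{-1}A^T$ and verify the four Penrose equations directly using the identities $L_\lambda A = I-\lambda B_\lambda^{-1}$ and $AL_\lambda = A B_\lambda^{-1}A^T$ (both visibly symmetric for each $\lambda$); passing to the limit then gives symmetry of $LA$ and $AL$, and multiplying by $A$ and using $R(A^T)=R(A^\dagger)$ would recover $ALA=A$ and $LAL=L$. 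The SVD route, however, is the cleanest and is what I would present.
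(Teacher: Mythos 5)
Your SVD argument is correct: the computation $B_\lambda^{-1}A^{T}=V(\Sigma^{T}\Sigma+\lambda I)^{-1}\Sigma^{T}U^{T}$ is right, the zero-singular-value block is killed by $\Sigma^{T}$ before the limit is taken, and the entrywise limit $\sigma_i/(\sigma_i^2+\lambda)\to 1/\sigma_i$ gives $V\Sigma^{\dag}U^{T}=A^{\dag}$. Note, however, that the paper contains no proof of this statement at all — it is quoted verbatim as Theorem $4.3$ of Barata and Hussein \cite{labarh1} and used as a preliminary, so there is no internal argument to compare against. In the cited source the result is obtained in essentially the same spirit as yours, but phrased with the spectral decomposition of $AA^{T}$ rather than a full SVD: using $(A^{T}A+\lambda I)^{-1}A^{T}=A^{T}(AA^{T}+\lambda I)^{-1}$ and the spectral projectors of $AA^{T}$, the potentially singular term $\frac{1}{\lambda}A^{T}P_{0}$ vanishes because $P_{0}$ projects onto $N(AA^{T})=N(A^{T})$, after which the limit is identified with $A^{\dag}$ by checking the Penrose equations — which is precisely the alternative route you sketch at the end. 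So your proposal is a correct, self-contained proof equivalent to the cited one; the only cosmetic caveat is that the relevant limit here is one-sided ($\lambda\to 0^{+}$, matching the paper's standing assumption $\lambda>0$), which is exactly what your computation treats.
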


\section{Main Results}\label{main}
This section has three parts. In the first part of this section, we discuss some convergence and comparison results related to the weak splitting of the first type and second type. The concept of double weak splittings of type II is introduced in the second part. In addition, several results based on double weak splittings of type II has been discussed. In the last part, we study for double proper weak splitting and its comparison with respect to the double weak splitting of type II.
\subsection{Convergence \& comparison using weak splittings}\label{sub3.1}
We first study the convergence of regularized iterative scheme (\ref{eq113}) for the well-posed system (\ref{lareq1}).
In view of  Theorem \ref{2.1.8} and Theorem \ref{3.1.5}, it is clear that the iterative scheme (\ref{eq113}) converges to $A^\dagger b$ and summarized in the next result.
 
\begin{theorem}\label{4.1conv}
Let $A\in\rmn$. For $\lambda>0$, if $B_{\lambda} = M_{\lambda}-N_{\lambda}$ is a  weak splitting of the first type (respectively, second type) of $B_\lambda\in\rnn$ with $\lim_{\lambda\to 0}B_{\lambda}^{-1}N_{\lambda}\geq 0$ (respectively, $\lim_{\lambda\to 0}N_{\lambda}B_{\lambda}^{-1}\geq 0$), then the iterative scheme $(\ref{eq113})$ converges to $B_{\lambda}^{-1}A^{T}b= A^{\dag}b$ as $\lambda \to 0$.
\end{theorem}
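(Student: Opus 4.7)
The plan is to combine Theorem \ref{3.1.5} (which characterises the spectral radius of the iteration matrix for a weak splitting of a nonsingular matrix) with Theorem \ref{2.1.8} (which identifies $\lim_{\lambda\to 0} B_\lambda^{-1} A^{T}b$ as $A^\dagger b$). These are precisely the two ingredients alluded to in the sentence immediately preceding the theorem, so the proof should be short and organised as two independent limits: first $k\to\infty$ for fixed $\lambda$, and then $\lambda\to 0$.

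For the first step I would fix $\lambda>0$ and apply Theorem \ref{3.1.5} to the splitting $B_\lambda=M_\lambda-N_\lambda$. Reading the nonnegativity hypothesis pointwise in $\lambda$ (so that $B_\lambda^{-1}N_\lambda\geq 0$, respectively $N_\lambda B_\lambda^{-1}\geq 0$, for the relevant $\lambda$), Theorem \ref{3.1.5} directly delivers
\[
\rho(M_\lambda^{-1}N_\lambda)\;=\;\frac{\rho(B_\lambda^{-1}N_\lambda)}{1+\rho(B_\lambda^{-1}N_\lambda)}\;<\;1
\]
in the first-type case, and the analogous identity with $N_\lambda B_\lambda^{-1}$ in the second-type case. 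Since the recursion \eqref{eq113} is an affine map whose linear part $M_\lambda^{-1}N_\lambda$ is contractive in spectral radius, the standard Banach-style fixed-point argument gives convergence of $\{x^k\}$ to the unique fixed point $x^*(\lambda)$, and passing to the limit in \eqref{eq113} yields $B_\lambda x^*(\lambda)=A^{T}b$, i.e.\ $x^*(\lambda)=B_\lambda^{-1}A^{T}b$.

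For the second step I invoke Theorem \ref{2.1.8}, which gives $\lim_{\lambda\to 0}B_\lambda^{-1}A^{T}b=A^\dagger b$. Chaining this with Step 1 produces the stated conclusion: the sequence generated by \eqref{eq113} converges to $B_\lambda^{-1}A^{T}b$ for each fixed admissible $\lambda$, and this limit tends to $A^\dagger b$ as $\lambda\to 0$.

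The main delicacy, and the only place I would slow down, is the precise reading of the hypothesis $\lim_{\lambda\to 0}B_\lambda^{-1}N_\lambda\geq 0$: Theorem \ref{3.1.5} genuinely requires the nonnegativity of $B_\lambda^{-1}N_\lambda$ (or $N_\lambda B_\lambda^{-1}$) at each $\lambda$ under consideration, not merely in the limit. Accordingly, I would either (i) interpret the hypothesis pointwise, which is consistent with Definition \ref{3.1.13} and seems to be the author's intent, or (ii) use continuity of $\lambda\mapsto B_\lambda^{-1}N_\lambda$ together with the limit assumption to transfer nonnegativity to a sufficiently small punctured neighbourhood of $0$. Beyond this bookkeeping, no new estimate is needed—the theorem is essentially a repackaging of Theorems \ref{3.1.5} and \ref{2.1.8}.
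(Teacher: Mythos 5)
Your proposal is correct and coincides with the paper's own treatment: the paper offers no separate proof of this theorem, stating only that the result is clear ``in view of Theorem \ref{2.1.8} and Theorem \ref{3.1.5}'', which is precisely the two-step argument you give (spectral-radius contraction of $M_{\lambda}^{-1}N_{\lambda}$ for fixed $\lambda$, then the Barata--Hussein limit $B_{\lambda}^{-1}A^{T}b\to A^{\dag}b$). Your remark that the hypothesis $\lim_{\lambda\to 0}B_{\lambda}^{-1}N_{\lambda}\geq 0$ must in effect be read pointwise (or transferred to small $\lambda$ by continuity) to invoke Theorem \ref{3.1.5} is a legitimate clarification of a point the paper leaves implicit, not a deviation from its approach.
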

Due to the fact that both system (\ref{laeq1}) and (\ref{lareq1}) convergence to the same least square solution  $A^{\dag}b$, it is better to study and analyze the spectral radius of the respective iteration matrix. Motivated by Theorem $3.11$ of \cite{labals2015},  we have an affirmative answer to these spectral raddi and stated below.

\begin{theorem}\label{pp4.1conv}
Let $A = M-N$ be a weak proper splitting of the first type of $A\in\rmn$. For  $\lambda >0$, let $B_{\lambda} = M_{\lambda}-N_{\lambda}$ be a weak splitting of the first type of  $B_{\lambda}\in\rnn$. If $A^{\dag}N\geq \lim_{\lambda\to 0}B_{\lambda}^{-1}N_{\lambda}\geq 0$, then $\lim_{\lambda\to 0}\rho(M_{\lambda}^{-1}N_{\lambda})\leq \rho(M^{\dag}N)<1$.
\end{theorem}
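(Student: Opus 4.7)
The plan is to reduce both spectral radii to expressions in $\rho(A^\dagger N)$ and $\rho(B_\lambda^{-1}N_\lambda)$ via Theorems \ref{3.1.3} and \ref{3.1.5}, and then use monotonicity in two places: monotonicity of spectral radius on nonnegative matrices (Theorem \ref{2.25}), and monotonicity of the scalar map $t\mapsto t/(1+t)$ on $[0,\infty)$.

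First I would observe that since $A^{\dagger}N\geq \lim_{\lambda\to 0}B_{\lambda}^{-1}N_{\lambda}\geq 0$, the matrix $A^{\dagger}N$ is itself nonnegative. Because $A=M-N$ is a weak proper splitting of the first type with $A^{\dagger}N\geq 0$, Theorem \ref{3.1.3} applies and yields
\begin{equation*}
\rho(M^{\dagger}N)=\frac{\rho(A^{\dagger}N)}{1+\rho(A^{\dagger}N)}<1.
\end{equation*}

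Next, for each $\lambda>0$, $B_{\lambda}=M_{\lambda}-N_{\lambda}$ is a weak splitting of the first type of the nonsingular matrix $B_{\lambda}$. Assuming (as in Theorem \ref{4.1conv}, and which the hypothesis $\lim_{\lambda\to 0}B_{\lambda}^{-1}N_{\lambda}\geq 0$ is designed to support in the limit) that $B_{\lambda}^{-1}N_{\lambda}\geq 0$ for all sufficiently small $\lambda$, Theorem \ref{3.1.5} gives
\begin{equation*}
\rho(M_{\lambda}^{-1}N_{\lambda})=\frac{\rho(B_{\lambda}^{-1}N_{\lambda})}{1+\rho(B_{\lambda}^{-1}N_{\lambda})}.
\end{equation*}
Taking $\lambda\to 0$ and using continuity of the spectral radius together with continuity of $t\mapsto t/(1+t)$, I obtain
\begin{equation*}
\lim_{\lambda\to 0}\rho(M_{\lambda}^{-1}N_{\lambda})=\frac{\rho\bigl(\lim_{\lambda\to 0}B_{\lambda}^{-1}N_{\lambda}\bigr)}{1+\rho\bigl(\lim_{\lambda\to 0}B_{\lambda}^{-1}N_{\lambda}\bigr)}.
\end{equation*}

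Finally, the chain $A^{\dagger}N\geq \lim_{\lambda\to 0}B_{\lambda}^{-1}N_{\lambda}\geq 0$ is an entrywise inequality between two nonnegative matrices, so Theorem \ref{2.25} gives $\rho(A^{\dagger}N)\geq \rho(\lim_{\lambda\to 0}B_{\lambda}^{-1}N_{\lambda})$. Since $t\mapsto t/(1+t)$ is increasing on $[0,\infty)$, combining the two displays above produces the required inequality $\lim_{\lambda\to 0}\rho(M_{\lambda}^{-1}N_{\lambda})\leq \rho(M^{\dagger}N)<1$.

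The main obstacle I foresee is purely technical rather than conceptual: one must justify interchanging the limit with the spectral radius, and one must ensure the nonnegativity condition $B_{\lambda}^{-1}N_{\lambda}\geq 0$ needed to invoke Theorem \ref{3.1.5} holds for all small $\lambda$ (rather than only in the limit). The first point is standard since $\rho$ is a continuous function of the matrix entries; the second is either an implicit hypothesis inherited from the convergence setup of Theorem \ref{4.1conv} or can be extracted from the given one-sided limit assumption together with the weak-splitting structure. Once these two points are cleanly stated, the rest of the argument is just a monotonicity chase.
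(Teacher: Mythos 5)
Your proposal is correct and follows essentially the same route as the paper's proof: invoke Theorems \ref{3.1.3} and \ref{3.1.5} to express $\rho(M^{\dag}N)$ and $\rho(M_{\lambda}^{-1}N_{\lambda})$ via $\sigma/(1+\sigma)$, apply Theorem \ref{2.25} to the entrywise hypothesis, and finish with monotonicity of $t\mapsto t/(1+t)$. If anything, you are more explicit than the paper about the two technical points (continuity of $\rho$ under the limit and nonnegativity of $B_{\lambda}^{-1}N_{\lambda}$ for small $\lambda$), which the paper passes over silently.
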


\begin{proof}
Let $A^{\dag}N\geq 0$ and $\lim_{\lambda\to 0}(B_{\lambda}^{-1}N_{\lambda})\geq 0$. Then by Theorem \ref{3.1.3} and \ref{3.1.5} we obtain $\rho(M^{\dag}N)<1$ and $\lim_{\lambda\to 0}\rho(M_{\lambda}^{-1}N_{\lambda})<1$, respectively. By Theorem \ref{2.25}, the inequality $\rho(A^{\dag}N)\geq \lim_{\lambda\to 0}\rho(B_{\lambda}^{-1}N_{\lambda})$ follows from the assumption  $A^{\dag}N\geq \lim_{\lambda\to 0}(B_{\lambda}^{-1}N_{\lambda})$. Since  $\frac{\sigma}{1+\sigma}$ is a strictly increasing function in $\sigma( \geq 0)$, so we have $\frac{\rho(A^{\dag}N)}{1+\rho(A^{\dag}N)}\geq \lim_{\lambda\to 0}\frac{\rho(B_{\lambda}^{-1}N_{\lambda})}{1+\rho(B_{\lambda}^{-1}N_{\lambda})}$. In view of Theorem \ref{3.1.3} and Theorem \ref{3.1.5}, one can conclude that $\lim_{\lambda\to 0}\rho(M_{\lambda}^{-1}N_{\lambda})\leq \rho(M^{\dag}N)<1$. 
\end{proof}
In support of Theorem \ref{pp4.1conv}, the following example is worked-out.
\begin{example}\label{exthm3.2}
Let $A = \begin{bmatrix}
4 & 0 & 2\\
0 & 4 & 2\\
2 & 2 & -4\\
2 & 2 & 0
\end{bmatrix} = 
\begin{bmatrix}
160 & 80 & 60\\
120 & 160 & 60\\
80 & 80 & -4\\
80 & 80 & 0
\end{bmatrix}
-
\begin{bmatrix}
156 & 80 & 58\\
120 & 156 & 58\\
78 & 78 & 0\\
78 & 78 & 0
\end{bmatrix}=M-N$
be a weak proper splitting of the first type of $A$. Now for $\lambda = 10^{-4}$, we have 
\begin{eqnarray*}
B_{\lambda} =  \begin{bmatrix}
24.0001 & 8 & 0\\
8 & 24.0001 & 0\\
0 & 0 & 24.0001
\end{bmatrix}
&=&
\begin{bmatrix}
100 & 20 & 2\\
35 & 40 & 1\\
5 & 3 & 40
\end{bmatrix}
-
\begin{bmatrix}
75.9999 & 12 & 2\\
27 & 15.9999 & 1\\
5 & 3 & 15.9999
\end{bmatrix}\\
&=& M_{\lambda}-N_{\lambda},
\end{eqnarray*}
is a weak splitting of the first type of $B_{\lambda}$. One can easily verify that $A^{\dag}N\geq B_{\lambda}^{-1}N_{\lambda}\geq 0$ and  $0.7594 = \rho(M_{\lambda}^{-1}N_{\lambda})<\rho(M^{\dag}N)=0.9823 <1$.
\end{example}

 Note that, in the above theorem, we do not assume semi-monotone condition on $A$  as considered in \cite{labals2015} while comparing two nonnegative splittings. Similarly, we can show the next theorem for a weak splitting of the second type.
 
\begin{theorem}\label{pp4.2conv}
Let $A = M-N$ be a weak proper splitting of the second type of a singular matrix $A\in\rnn$. For $\lambda >0$, let $B_{\lambda} = M_{\lambda}-N_{\lambda}$ be a weak splitting of the second type of  $B_{\lambda}\in\rnn$. If $NA^{\dag}\geq \lim_{\lambda\to 0}N_{\lambda}B_{\lambda}^{-1}\geq 0$, then $\lim_{\lambda\to 0}\rho(M_{\lambda}^{-1}N_{\lambda})\leq \rho(M^{\dag}N)<1$.
\end{theorem}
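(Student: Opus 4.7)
The plan is to mirror the structure of the proof of Theorem \ref{pp4.1conv} and handle everything on the ``right-hand side'' of the splitting (i.e., with $N$ on the left of the inverse), using the second-type analogues of the characterizations already invoked. The key observation that makes this mirroring possible is that the spectral radius is invariant under cyclic permutation of factors: $\rho(M^{\dagger}N)=\rho(NM^{\dagger})$ and $\rho(M_{\lambda}^{-1}N_{\lambda})=\rho(N_{\lambda}M_{\lambda}^{-1})$, a fact explicitly recorded in the preliminaries. Thus, proving the bound $\lim_{\lambda\to 0}\rho(N_{\lambda}M_{\lambda}^{-1})\leq \rho(NM^{\dagger})<1$ is equivalent to the desired inequality.

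First I would extract nonnegativity of the two quantities that drive the estimates. From $NA^{\dagger}\geq \lim_{\lambda\to 0}N_{\lambda}B_{\lambda}^{-1}\geq 0$ we get both $NA^{\dagger}\geq 0$ and $N_{\lambda}B_{\lambda}^{-1}\geq 0$ (at least in the limit, which is all we need, exactly as in the proof of Theorem \ref{pp4.1conv}). Then I would apply the second-type branch of Theorem \ref{3.1.3} to the weak proper splitting $A=M-N$ to conclude
\[
\rho(NM^{\dagger}) \;=\; \frac{\rho(NA^{\dagger})}{1+\rho(NA^{\dagger})}\;<\;1,
\]
and the second-type branch of Theorem \ref{3.1.5} to $B_{\lambda}=M_{\lambda}-N_{\lambda}$ to conclude
\[
\rho(N_{\lambda}M_{\lambda}^{-1}) \;=\; \frac{\rho(N_{\lambda}B_{\lambda}^{-1})}{1+\rho(N_{\lambda}B_{\lambda}^{-1})}\;<\;1.
\]

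Next I would compare the two spectral radii on the right. Since $NA^{\dagger}\geq \lim_{\lambda\to 0}N_{\lambda}B_{\lambda}^{-1}\geq 0$, Theorem \ref{2.25} gives $\rho(NA^{\dagger})\geq \lim_{\lambda\to 0}\rho(N_{\lambda}B_{\lambda}^{-1})$. Because the map $\sigma \mapsto \sigma/(1+\sigma)$ is strictly increasing on $[0,\infty)$, this inequality carries through the rational transformation, yielding
\[
\frac{\rho(NA^{\dagger})}{1+\rho(NA^{\dagger})} \;\geq\; \lim_{\lambda\to 0}\frac{\rho(N_{\lambda}B_{\lambda}^{-1})}{1+\rho(N_{\lambda}B_{\lambda}^{-1})}.
\]
Combining with the two equalities from the previous step and the cyclic-permutation identity for $\rho$, I conclude $\lim_{\lambda\to 0}\rho(M_{\lambda}^{-1}N_{\lambda}) \leq \rho(M^{\dagger}N)<1$.

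The only place that requires care is the interchange of ``limit'' and ``spectral radius'' when invoking Theorem \ref{3.1.5} at the level of $B_{\lambda}$ rather than at the limit: one must ensure that the hypothesis $N_{\lambda}B_{\lambda}^{-1}\geq 0$ is available for $\lambda$ close enough to $0$, or alternatively that Theorem \ref{3.1.5} can be applied to a limiting splitting. This is precisely the same subtlety tacitly used in the proof of Theorem \ref{pp4.1conv}, so I would handle it in the same spirit (taking $\lambda$ small, using continuity of $\rho$ on the convergent sequence of matrices), rather than by introducing new machinery. Apart from that, the argument is a purely formal transcription of the first-type proof into the second-type setting.
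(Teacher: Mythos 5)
Your proposal is correct and matches the paper's intent exactly: the paper gives no separate proof of this theorem, stating only that it follows "similarly" to Theorem \ref{pp4.1conv}, and your argument is precisely that mirroring — second-type branches of Theorems \ref{3.1.3} and \ref{3.1.5}, the comparison via Theorem \ref{2.25}, monotonicity of $\sigma/(1+\sigma)$, and the identity $\rho(M^{\dag}N)=\rho(NM^{\dag})$, $\rho(M_{\lambda}^{-1}N_{\lambda})=\rho(N_{\lambda}M_{\lambda}^{-1})$. The limit-versus-fixed-$\lambda$ subtlety you flag is present in the paper's own proof of Theorem \ref{pp4.1conv} as well, so your treatment is on the same footing.
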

Further, we discuss a few comparison results by considering weak splittings of alternate types.

\begin{theorem}\label{4.2conv}
Let $A = M-N$ be a weak proper splitting of the second type of a singular semi-monotone matrix $A\in\rnn$ with $NA^{\dag}\geq 0$ . For $\lambda >0$, let $B_{\lambda} = M_{\lambda}-N_{\lambda}$ be a weak splitting of the first type of the matrix $B_{\lambda}\in\rnn$ with $\lim_{\lambda\to 0}B_{\lambda}^{-1}N_{\lambda}\geq 0$. If $\lim_{\lambda\to 0}M_{\lambda}^{-1}A^{T} \geq M^{\dag}$, then $\lim_{\lambda\to 0}\rho(M_{\lambda}^{-1}N_{\lambda})\leq \rho(M^{\dag}N)<1$. 
\end{theorem}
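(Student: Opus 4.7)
The plan is to follow the scalar-reduction template of Theorem \ref{pp4.1conv}: first convert the comparison of iteration matrices into a comparison of auxiliary matrices $NA^{\dag}$ and $C:=\lim_{\lambda\to 0}B_{\lambda}^{-1}N_{\lambda}$ via Theorems \ref{3.1.3} and \ref{3.1.5}, and then extract the required inequality from the hypothesis $\lim_{\lambda\to 0}M_{\lambda}^{-1}A^{T}\geq M^{\dag}$ using Theorem \ref{2.1.8}. Since $A=M-N$ is a second-type weak proper splitting with $NA^{\dag}\geq 0$, Theorem \ref{3.1.3} gives $\rho(M^{\dag}N)=\rho(NA^{\dag})/(1+\rho(NA^{\dag}))<1$. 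Since $B_{\lambda}=M_{\lambda}-N_{\lambda}$ is a first-type weak splitting with $\lim_{\lambda\to 0}B_{\lambda}^{-1}N_{\lambda}=C\geq 0$, Theorem \ref{3.1.5} combined with continuity of the spectral radius yields $\lim_{\lambda\to 0}\rho(M_{\lambda}^{-1}N_{\lambda})=\rho(C)/(1+\rho(C))$. Because $\sigma\mapsto \sigma/(1+\sigma)$ is strictly increasing on $[0,\infty)$, the theorem reduces to proving $\rho(C)\leq \rho(NA^{\dag})$.

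To obtain the central matrix inequality I would factor $B_{\lambda}^{-1}A^{T}=(B_{\lambda}^{-1}M_{\lambda})(M_{\lambda}^{-1}A^{T})$. Writing $D:=\lim_{\lambda\to 0}M_{\lambda}^{-1}A^{T}$, the identity $B_{\lambda}^{-1}M_{\lambda}=I+B_{\lambda}^{-1}N_{\lambda}\to I+C$ together with Theorem \ref{2.1.8} gives $A^{\dag}=(I+C)\,D$. Theorem \ref{2.6}(ii) simultaneously gives $A^{\dag}=M^{\dag}(I+NA^{\dag})$. Equating these two expressions for $A^{\dag}$ and using $D\geq M^{\dag}$ along with $I+NA^{\dag}\geq 0$ produces
\[
(I+C)\,D \;=\; M^{\dag}(I+NA^{\dag}) \;\leq\; D(I+NA^{\dag}),
\]
so that entry-wise subtraction of $D$ from the outer inequality yields the key matrix inequality $C\,D\leq D\,(NA^{\dag})$.

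Finally, let $y\geq 0$, $y\neq 0$, be a Perron eigenvector of $NA^{\dag}$ provided by Theorem \ref{2.1}, so $NA^{\dag}y=\rho(NA^{\dag})y$. Applying $CD\leq D(NA^{\dag})$ to $y$ gives $C(Dy)\leq \rho(NA^{\dag})(Dy)$. Evaluating $A^{\dag}=M^{\dag}(I+NA^{\dag})$ at $y$ yields $M^{\dag}y=A^{\dag}y/(1+\rho(NA^{\dag}))$, which is nonnegative since $A^{\dag}\geq 0$ by semi-monotonicity; hence $z:=Dy\geq M^{\dag}y\geq 0$, and a short kernel argument shows $z\neq 0$ whenever $\rho(NA^{\dag})>0$ (the edge case $\rho(NA^{\dag})=0$ forces $C$ to be nilpotent, giving $\rho(C)=0$ trivially). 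The subinvariant relation $Cz\leq \rho(NA^{\dag})z$ then delivers $\rho(C)\leq \rho(NA^{\dag})$ through Theorem \ref{2.4}(ii). The main obstacle lies precisely in this last step, because Theorem \ref{2.4}(ii) formally requires a strictly positive vector, whereas $z=Dy$ is only guaranteed to be nonnegative; this is handled either by exploiting irreducibility of $C$ (so that its left Perron vector is strictly positive and pairs cleanly with the subinvariant inequality), or by a small irreducible perturbation $C+\epsilon J$ with $\epsilon\to 0$ using continuity of the spectral radius.
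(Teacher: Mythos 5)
Your reduction to $\rho(C)\leq\rho(NA^{\dag})$ and the derivation of the matrix inequality $CD\leq D\,(NA^{\dag})$ from $A^{\dag}=(I+C)D=M^{\dag}(I+NA^{\dag})$ are sound, but the last step is a genuine gap, and the repairs you sketch do not close it. From $C(Dy)\leq\rho(NA^{\dag})(Dy)$ with $z=Dy$ only nonnegative you cannot conclude $\rho(C)\leq\rho(NA^{\dag})$: a subinvariance inequality at a merely nonnegative vector does not bound the spectral radius (take $C=\left(\begin{smallmatrix}0&1\\0&2\end{smallmatrix}\right)$ and $z=(1,0)^{T}$, for which $Cz=0\leq\alpha z$ for every $\alpha\geq 0$ while $\rho(C)=2$); Theorem \ref{2.4}(ii) genuinely needs $z>0$. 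Neither of your fixes is available: irreducibility of $C=\lim_{\lambda\to 0}B_{\lambda}^{-1}N_{\lambda}$ is not a hypothesis, and the perturbation $C+\epsilon J$ destroys the inequality in every coordinate where $z$ vanishes (the left-hand side becomes positive there while the right-hand side stays zero), so continuity of the spectral radius has nothing to act on. The asserted edge case is also unproved: $\rho(NA^{\dag})=0$ makes $NA^{\dag}$ nilpotent and gives $C^{k}D\leq D(NA^{\dag})^{k}$, but since $D$ need not be nonnegative this does not force $C$ to be nilpotent.

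The missing idea --- and the way the paper argues --- is to orient the Perron vector the other way, so that only Theorem \ref{2.4}(i) (nonnegative, nonzero vector) is needed. The paper rewrites the hypothesis $\lim_{\lambda\to 0}M_{\lambda}^{-1}A^{T}\geq M^{\dag}$ via Theorem \ref{2.8} and Theorem \ref{2.1.8} as $\lim_{\lambda\to 0}M_{\lambda}^{-1}N_{\lambda}B_{\lambda}^{-1}A^{T}\leq \lim_{\lambda\to 0}B_{\lambda}^{-1}A^{T}NM^{\dag}$, takes a nonnegative \emph{left} eigenvector $x^{T}$ of the nonnegative matrix $M_{\lambda}^{-1}N_{\lambda}$ (the quantity being bounded), and obtains $\lim_{\lambda\to 0}\rho(M_{\lambda}^{-1}N_{\lambda})\,z^{T}\leq z^{T}NM^{\dag}$ with $z^{T}=x^{T}A^{\dag}\geq 0$; the nonnegativity of $z$ comes exactly from the semi-monotonicity hypothesis $A^{\dag}\geq 0$, which your argument uses only incidentally. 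After transposing, Theorem \ref{2.4}(i) applied with the nonnegative matrix $(NM^{\dag})^{T}$ (here the second-type hypothesis $NM^{\dag}\geq 0$ enters) yields $\lim_{\lambda\to 0}\rho(M_{\lambda}^{-1}N_{\lambda})\leq\rho(NM^{\dag})=\rho(M^{\dag}N)<1$. The same repair works inside your own reduction: take a nonnegative left Perron vector $u^{T}$ of $C$; from $A^{\dag}=(I+C)D$ one gets $u^{T}A^{\dag}=(1+\rho(C))u^{T}D$, hence $u^{T}D=u^{T}A^{\dag}/(1+\rho(C))\geq 0$, and premultiplying $CD\leq D(NA^{\dag})$ by $u^{T}$ gives $\rho(C)\,u^{T}D\leq u^{T}D\,(NA^{\dag})$, which is precisely the configuration of Theorem \ref{2.4}(i) (modulo checking $u^{T}A^{\dag}\neq 0$, which the paper handles by a contradiction argument).
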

\begin{proof}
Using Theorem \ref{3.1.3} and \ref{3.1.5}, we get $\rho(M^{\dag}N)<1$ and $\lim_{\lambda\to 0}\rho(M_{\lambda}^{-1}N_{\lambda})<1$, respectively. By Theorem \ref{2.8}, the condition $\lim_{\lambda\to 0}(M_{\lambda}^{-1}A^{T}) \geq M^{\dag}$ yields the following inequality $\lim_{\lambda\to 0}(I-M_{\lambda}^{-1}N_{\lambda})B_{\lambda}^{-1}A^{T}\geq A^{\dag}(I-NM^{\dag})$.  Applying  $A^\dagger=\lim_{\lambda\to 0} B_{\lambda}^{-1}A^{T}$  (from Theorem \ref{2.1.8}), we obtain
\begin{equation}\label{eq33.1}
    \lim_{\lambda\to 0}(B_{\lambda}^{-1}A^{T}-M_{\lambda}^{-1}N_{\lambda}B_{\lambda}^{-1}A^{T})\geq \lim_{\lambda\to 0} B_{\lambda}^{-1}A^{T}(I-NM^{\dag}).
\end{equation}
Since $M_{\lambda}^{-1}N_{\lambda}\geq 0$, by Theorem \ref{2.1} there exists a nonnegative eigenvector $x^{T}$ such that $x^{T}M_{\lambda}^{-1}N_{\lambda} = \rho(M_{\lambda}^{-1}N_{\lambda})x^{T}$. Taking limit $\lambda \to 0$ both sides, further it leads
\begin{equation}\label{eq33.2}
   \lim_{\lambda\to 0}x^{T}M_{\lambda}^{-1}N_{\lambda} = \lim_{\lambda\to 0}\rho(M_{\lambda}^{-1}N_{\lambda})x^{T}. 
\end{equation}
Pre-multiplying equation \eqref{eq33.1} by $x^{T}$, we get  \begin{equation}\label{eq33.3}
   \lim_{\lambda\to 0}x^{T}M_{\lambda}^{-1}N_{\lambda}B_{\lambda}^{-1}A^{T}\leq \lim_{\lambda\to 0}x^{T}B_{\lambda}^{-1}A^{T}NM^{\dag}. 
\end{equation}
Equation \eqref{eq33.2} and $(\ref{eq33.3})$  yields $\lim_{\lambda\to 0}\rho(M_{\lambda}^{-1}N_{\lambda})z_\lambda^{T}\leq  z_\lambda^{T}NM^{\dag}$, where $z_\lambda^{T}=\lim_{\lambda\to 0}x^{T}B_{\lambda}^{-1}A^{T}$. Taking transpose, we obtain 
\begin{equation}\label{eq33.4}
 \lim_{\lambda\to 0}\rho(M_{\lambda}^{-1}N_{\lambda})z_\lambda\leq  (NM^{\dag})^Tz_\lambda.
 \end{equation}
Now $z_\lambda^{T} = x^{T} \lim_{\lambda\to 0}B_{\lambda}^{-1}A^{T} = x^{T}A^{\dag}\geq 0$. If $z_\lambda^{T} =0$, then $ \lim_{\lambda\to 0}x^{T}B_{\lambda}^{-1}A^{T} = 0$. Further, $0 = \lim_{\lambda\to 0}A(B_{\lambda}^{-1})^{T}x = \lim_{\lambda\to 0}A^{T}A(B_{\lambda}^{-1})^{T}x = \lim_{\lambda\to 0}(A^{T}A+\lambda I)(B_{\lambda}^{-1})^{T}x = \lim_{\lambda\to 0}(A^{T}A+\lambda I)^{T}(B_{\lambda}^{-1})^{T}x = \lim_{\lambda\to 0}B_{\lambda}^{T}(B_{\lambda}^{-1})^{T}x = x$, which is a contradiction. Hence $z_\lambda^{T}> 0$. Applying Theorem \ref{2.4} to equation \eqref{eq33.4}, we conclude that 
\begin{center}
  $\lim_{\lambda\to 0}\rho(M_{\lambda}^{-1}N_{\lambda})\leq   \rho(NM^{\dag})^{T} = \rho(NM^{\dag})=\rho(M^{\dag}N)<1$.   
\end{center}
\end{proof}
The semi-monotone condition given in the Theorem \ref{4.4conv} can be relaxed   as discussed in the next result. 

\begin{theorem}\label{thm3.5}
Let $A = M-N$ be a weak proper splitting of the second type with $NA^{\dag}\geq 0$. For $\lambda >0$, suppose $B_{\lambda} = M_{\lambda}-N_{\lambda}$ is a weak splitting of the first type of the matrix $B_{\lambda}$ with $\lim_{\lambda\to 0}B_{\lambda}^{-1}N_{\lambda}\geq 0$. If $\lim_{\lambda\to 0}M_{\lambda}^{-1}N_{\lambda}A^{\dag} \leq A^{\dag}NM^{\dag}$, then $\lim_{\lambda\to 0}\rho(M_{\lambda}^{-1}N_{\lambda})\leq \rho(M^{\dag}N)<1$.
\end{theorem}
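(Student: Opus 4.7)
The strategy will mirror the template used in the proof of Theorem \ref{4.2conv}: first dispatch that both spectral radii lie strictly below $1$, then extract a nonnegative left Perron eigenvector of $M_{\lambda}^{-1}N_{\lambda}$, multiply the hypothesis through by it, and finally invoke Theorem \ref{2.4}(i). The genuinely new difficulty will be verifying the Perron--Frobenius side conditions on the vector $x_{\lambda}^{T}A^{\dag}$ without assuming semi-monotonicity of $A$, which is precisely the hypothesis that this theorem is designed to relax relative to Theorem \ref{4.2conv}.

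First I would apply Theorem \ref{3.1.3} to the weak proper splitting $A=M-N$ of the second type with $NA^{\dag}\ge 0$ to obtain $\rho(M^{\dag}N)=\rho(NA^{\dag})/(1+\rho(NA^{\dag}))<1$, and Theorem \ref{3.1.5} applied to the weak splitting $B_{\lambda}=M_{\lambda}-N_{\lambda}$ of the first type with $\lim_{\lambda\to 0}B_{\lambda}^{-1}N_{\lambda}\ge 0$ to obtain $\lim_{\lambda\to 0}\rho(M_{\lambda}^{-1}N_{\lambda})<1$. This reduces the theorem to comparing the two spectral radii.

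Next, since $M_{\lambda}^{-1}N_{\lambda}\ge 0$, Theorem \ref{2.1} supplies a nonnegative left eigenvector $x_{\lambda}^{T}$, with $x_{\lambda}^{T}M_{\lambda}^{-1}N_{\lambda}=\rho(M_{\lambda}^{-1}N_{\lambda})\,x_{\lambda}^{T}$. Left-multiplying the hypothesis $\lim_{\lambda\to 0}M_{\lambda}^{-1}N_{\lambda}A^{\dag}\le A^{\dag}NM^{\dag}$ by $x_{\lambda}^{T}$, passing the limit inside, and transposing yields
\[
\lim_{\lambda\to 0}\rho(M_{\lambda}^{-1}N_{\lambda})\,z_{\lambda}\le (NM^{\dag})^{T}z_{\lambda},\qquad z_{\lambda}:=(A^{\dag})^{T}x_{\lambda}.
\]
Because $NM^{\dag}\ge 0$ from the weak proper splitting of the second type, $(NM^{\dag})^{T}\ge 0$, and Theorem \ref{2.4}(i) then delivers the desired bound $\lim_{\lambda\to 0}\rho(M_{\lambda}^{-1}N_{\lambda})\le \rho((NM^{\dag})^{T})=\rho(M^{\dag}N)$, provided that $z_{\lambda}\ge 0$ and $z_{\lambda}\ne 0$.

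The hard part will be these two side conditions on $z_{\lambda}$. The non-vanishing copies the contradiction at the end of the proof of Theorem \ref{4.2conv}: using $A^{\dag}=\lim_{\lambda\to 0}B_{\lambda}^{-1}A^{T}$ from Theorem \ref{2.1.8}, the assumption $z_{\lambda}^{T}=0$ forces $\lim_{\lambda\to 0}x_{\lambda}^{T}B_{\lambda}^{-1}A^{T}=0$, after which the chain $A^{T}A(B_{\lambda}^{-1})^{T}x_{\lambda}=(A^{T}A+\lambda I)(B_{\lambda}^{-1})^{T}x_{\lambda}=B_{\lambda}^{T}(B_{\lambda}^{-1})^{T}x_{\lambda}=x_{\lambda}$ drives $x_{\lambda}=0$, contradicting the choice of Perron eigenvector. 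The nonnegativity of $z_{\lambda}$ is where the novelty really sits, since Theorem \ref{4.2conv} concluded it trivially from $A^{\dag}\ge 0$. The plan is to extract it from the hypothesis itself: combining $\lim_{\lambda\to 0}M_{\lambda}^{-1}N_{\lambda}A^{\dag}\le A^{\dag}NM^{\dag}$ with the identity $A^{\dag}NM^{\dag}=M^{\dag}NA^{\dag}$ from Theorem \ref{2.6}(iii) and the decomposition $A^{\dag}=(I+A^{\dag}N)M^{\dag}$ coming from Theorem \ref{2.6}(ii), one should be able to write $x_{\lambda}^{T}A^{\dag}$ as a sum of quantities each of which is nonnegative on account of $NA^{\dag}\ge 0$, $NM^{\dag}\ge 0$ and $\lim_{\lambda\to 0}M_{\lambda}^{-1}N_{\lambda}\ge 0$. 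I expect this sign verification, rather than the Perron--Frobenius bookkeeping, to be the main technical point of the argument.
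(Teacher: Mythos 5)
Your skeleton is exactly the route the paper intends: the paper gives no details for this theorem beyond the remark that ``the proof will go a similar way as in Theorem \ref{4.2conv}'', and your steps reproduce that template --- both spectral radii strictly below $1$ via Theorems \ref{3.1.3} and \ref{3.1.5}, a nonnegative left Perron vector $x_{\lambda}^{T}$ of $M_{\lambda}^{-1}N_{\lambda}\geq 0$, pre-multiplication of the hypothesis, transposition to $\lim_{\lambda\to 0}\rho(M_{\lambda}^{-1}N_{\lambda})\,z_{\lambda}\le (NM^{\dag})^{T}z_{\lambda}$ with $z_{\lambda}=(A^{\dag})^{T}x_{\lambda}$, the contradiction argument for $z_{\lambda}\neq 0$ using Theorem \ref{2.1.8}, and Theorem \ref{2.4}(i) with $(NM^{\dag})^{T}\geq 0$ and $\rho((NM^{\dag})^{T})=\rho(NM^{\dag})=\rho(M^{\dag}N)$. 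All of that is sound and matches the paper's template.

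The gap is the step you yourself flag and then only sketch: the nonnegativity of $z_{\lambda}$. In Theorem \ref{4.2conv} this is precisely where the semi-monotonicity $A^{\dag}\geq 0$ is consumed, and the present statement drops that hypothesis. Your proposed repair does not work as described: using Theorem \ref{2.6}(ii) one gets $A^{\dag}=(I+A^{\dag}N)M^{\dag}=M^{\dag}+A^{\dag}NM^{\dag}$ (equivalently $A^{\dag}=M^{\dag}+M^{\dag}NA^{\dag}$), so $x_{\lambda}^{T}A^{\dag}=x_{\lambda}^{T}M^{\dag}+x_{\lambda}^{T}A^{\dag}NM^{\dag}$; the term $x_{\lambda}^{T}M^{\dag}$ has no sign, and the other term is nonnegative only if $x_{\lambda}^{T}A^{\dag}\geq 0$, which is the very thing being proved --- the identity $A^{\dag}NM^{\dag}=M^{\dag}NA^{\dag}$ of Theorem \ref{2.6}(iii) and the hypotheses $NA^{\dag}\geq 0$, $NM^{\dag}\geq 0$, $\lim_{\lambda\to 0}M_{\lambda}^{-1}N_{\lambda}\geq 0$ do not break this circularity, since none of them controls the sign of $x_{\lambda}^{T}M^{\dag}$ or $x_{\lambda}^{T}A^{\dag}$. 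The sign is not a formality: if $z_{\lambda}$ were nonpositive, the same inequality would, after negation and Theorem \ref{2.4}(ii), point toward the reverse comparison. So as written your proposal does not prove the theorem; to close it you must either reinstate $A^{\dag}\geq 0$ (which collapses the result back to Theorem \ref{4.2conv}) or supply a genuinely different mechanism for the sign of $z_{\lambda}$ --- and be aware that the paper itself offers no such mechanism, since its ``proof'' is only the pointer to Theorem \ref{4.2conv}, whose argument uses semi-monotonicity at exactly this spot.
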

The proof will go a similar way as in Theorem \ref{4.2conv}. The converse of the Theorem \ref{thm3.5} need not true in general. One can verify it by the following example.
\begin{example}\label{exthm3.5}
Consider the matrices $A$, $M$, $N$, $B_{\lambda}$, $M_{\lambda}$, and $N_{\lambda}$ as given in Example \ref{exthm3.2}. Clearly $NA^\dagger >0$ and $A=M-N$ is a weak splitting of the second type since
\begin{center}
$NM^\dagger=\begin{bmatrix}
0.9556 &   0.0112 &   0.0013  &  0.0208\\
    0.0222  &  0.9445  &  0.0003  &  0.0385\\
    0.0108 &   0.0215  &  0.4843  &  0.4369\\
    0.0108  &  0.0215 &   0.4843  &  0.4369
\end{bmatrix}$.  Now for $\lambda = 10^{-4}$, we have $B_{\lambda}^{-1}N_{\lambda}\geq 0$.
\end{center}
 Further, from Example \ref{exthm3.2}, it follows that  $\rho(M_{\lambda}^{-1}N_{\lambda})< \rho(M^{\dag}N)<1$ but 
 \begin{center}
 $ A^{\dag}NM^{\dag}-M_{\lambda}^{-1}N_{\lambda}A^{\dag} = \begin{bmatrix}
0.0393 & -0.0168 & 0.0123 & 0.0058\\
-0.0346 & 0.1124 & 0.0384 & 0.0375\\
0.0436 & 0.0368 & -0.0187 & -0.0724
\end{bmatrix}
\ngeq 0.$
\end{center}
\end{example}

Similarly, we can show the following result for the same type of weak splittings.
\begin{lemma}\label{4.3conv}
Let $A = M-N$ be a weak proper splitting of the first type of a singular matrix  $A\in\rnn$ with $A^{\dag}N\geq 0$. For $\lambda >0$, let $B_{\lambda} = M_{\lambda}-N_{\lambda}$ be a weak splitting of the first type of the matrix $B_{\lambda}$ with $\lim_{\lambda\to 0}B_{\lambda}^{-1}N_{\lambda}\geq 0$. If $\lim_{\lambda\to 0}M_{\lambda}^{-1}N_{\lambda}A^{\dag} \leq A^{\dag}M^{\dag}N$, then $\lim_{\lambda\to 0}\rho(M_{\lambda}^{-1}N_{\lambda})\leq \rho(M^{\dag}N)<1$.
\end{lemma}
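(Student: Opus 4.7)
The plan is to follow the strategy of Theorem \ref{4.2conv}, adapted to the case where both the outer splitting and the inner regularized splitting are of the first type. I first record convergence on both sides: from $A^\dag N\geq 0$ and Theorem \ref{3.1.3}, $\rho(M^\dag N)=\rho(A^\dag N)/(1+\rho(A^\dag N))<1$; from $\lim_{\lambda\to 0}B_\lambda^{-1}N_\lambda\geq 0$ and Theorem \ref{3.1.5}, $\lim_{\lambda\to 0}\rho(M_\lambda^{-1}N_\lambda)<1$. The existence of the $\lambda\to 0$ limits appearing throughout is covered by Theorem \ref{2.1.7}.

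Next, I extract the spectral inequality from the hypothesis. Since $M_\lambda^{-1}N_\lambda\geq 0$, Perron-Frobenius (Theorem \ref{2.1}) applied to its transpose furnishes $x\geq 0$, $x\neq 0$, with $\lim_{\lambda\to 0}x^T M_\lambda^{-1}N_\lambda=\lim_{\lambda\to 0}\rho(M_\lambda^{-1}N_\lambda)\,x^T$. Pre-multiplying the hypothesis $\lim_{\lambda\to 0}M_\lambda^{-1}N_\lambda A^\dag\leq A^\dag M^\dag N$ by $x^T$ and setting $z^T=x^T A^\dag$ yields
\begin{equation*}
\lim_{\lambda\to 0}\rho(M_\lambda^{-1}N_\lambda)\,z^T\leq z^T M^\dag N,
\end{equation*}
which, after transposing, reads $\lim_{\lambda\to 0}\rho(M_\lambda^{-1}N_\lambda)\,z\leq (M^\dag N)^T z$. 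Since $M^\dag N\geq 0$, Theorem \ref{2.4}(i) will then deliver the desired bound $\lim_{\lambda\to 0}\rho(M_\lambda^{-1}N_\lambda)\leq \rho((M^\dag N)^T)=\rho(M^\dag N)<1$, provided $z\geq 0$ and $z\neq 0$.

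The main obstacle is verifying these positivity conditions without the semi-monotonicity of $A$ that was available in Theorem \ref{4.2conv}. Nontriviality $z\neq 0$ should follow by the same contradiction as at the end of the proof of Theorem \ref{4.2conv}: if $z^T=\lim_{\lambda\to 0}x^T B_\lambda^{-1}A^T=0$, then applying $A^T$, incorporating $\lambda I$, and exploiting the symmetry $B_\lambda^T=B_\lambda$ forces $x=0$, contradicting the eigenvector property. For the entry-wise bound $x^T A^\dag\geq 0$, I plan to use the identities from Theorems \ref{2.8} and \ref{2.6} — in particular the Neumann-type representation $A^\dag=(I-M^\dag N)^{-1}M^\dag=\sum_{k\geq 0}(M^\dag N)^k M^\dag$ (valid since $\rho(M^\dag N)<1$) together with the decomposition $A^\dag=M^\dag+A^\dag N M^\dag$ — to re-express $x^T A^\dag$ as a sum whose nonnegativity is inherited from $x\geq 0$, $A^\dag N\geq 0$ and $M^\dag N\geq 0$. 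Overcoming this positivity step is what separates the first-type case from the second-type argument of Theorem \ref{4.2conv}.
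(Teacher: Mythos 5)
Your skeleton is exactly the one the paper intends: the lemma is given in the paper with only the remark that it can be shown ``similarly'' to Theorem \ref{4.2conv} and Theorem \ref{thm3.5}, i.e.\ take a left Perron vector $x\geq 0$ of the nonnegative matrix $\lim_{\lambda\to 0}M_{\lambda}^{-1}N_{\lambda}$, pre-multiply the hypothesis $\lim_{\lambda\to 0}M_{\lambda}^{-1}N_{\lambda}A^{\dag}\leq A^{\dag}M^{\dag}N$ by $x^{T}$, set $z^{T}=x^{T}A^{\dag}$, transpose, and conclude with Theorem \ref{2.4}(i) applied to $(M^{\dag}N)^{T}\geq 0$. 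Up to that point your proposal coincides with the paper's route.

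However, the step you yourself single out as the obstacle --- $z\geq 0$ and $z\neq 0$ --- is not closed by the tools you propose, so as written there is a genuine gap. Both the Neumann-type expansion $A^{\dag}=\sum_{k\geq 0}(M^{\dag}N)^{k}M^{\dag}$ and the identity $A^{\dag}=M^{\dag}+A^{\dag}NM^{\dag}$ leave a bare factor $M^{\dag}$ on the right, so nonnegativity of $x^{T}A^{\dag}$ would follow from $x\geq 0$, $M^{\dag}N\geq 0$, $A^{\dag}N\geq 0$ only if one also had $M^{\dag}\geq 0$ (which via the same series gives back $A^{\dag}\geq 0$), i.e.\ essentially the semi-monotonicity that this lemma is supposed to drop; the hypotheses control $A^{\dag}N$ and $M^{\dag}N$, not $A^{\dag}$ or $M^{\dag}$ themselves. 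The nontriviality of $z$ is also more delicate than the contradiction you import from Theorem \ref{4.2conv}: $z=(A^{\dag})^{T}x=0$ holds exactly when $x\in N(A)$ (since $N((A^{\dag})^{T})=N((A^{T})^{\dag})=N(A)$), and for the singular $A$ of this lemma one has $\lim_{\lambda\to 0}\lambda B_{\lambda}^{-1}=P_{N(A)}\neq 0$, so the chain $0=\lim_{\lambda\to 0}A^{T}A(B_{\lambda}^{-1})^{T}x=\lim_{\lambda\to 0}(A^{T}A+\lambda I)(B_{\lambda}^{-1})^{T}x=x$ really only yields $x=P_{N(A)}x$, which is no contradiction; you would need an extra argument ruling out Perron left-eigenvectors of $\lim_{\lambda\to 0}M_{\lambda}^{-1}N_{\lambda}$ lying in $N(A)$, or an added hypothesis. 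To be fair, the paper's own ``proof'' is just the phrase ``similarly, we can show'' and inherits the same unaddressed point once semi-monotonicity is removed, but your proposal does not yet supply the missing positivity argument either.
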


Next, we discuss another comparison theorem for different pair of weak splittings. 
\begin{theorem}\label{4.4conv}
Let $A = M-N$ be a weak proper splitting of the first type of $A\in\rmn$ with $A^{\dag}N\geq 0$. For $\lambda >0$, let $B_{\lambda} = M_{\lambda}-N_{\lambda}$ be a weak splitting of the second type of the matrix $B_{\lambda}\in\rnn$ with $\lim_{\lambda\to 0}N_{\lambda}B_{\lambda}^{-1}\geq 0$. If $\lim_{\lambda\to 0}N_{\lambda}M_{\lambda}^{-1} \leq M^{\dag}N$, then $\lim_{\lambda\to 0}\rho(M_{\lambda}^{-1}N_{\lambda})\leq \rho(M^{\dag}N)<1$.
\end{theorem}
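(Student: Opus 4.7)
The plan is to imitate the direct monotonicity approach used in Theorem \ref{pp4.1conv} and reuse the spectral-radius characterisations of Theorems \ref{3.1.3} and \ref{3.1.5}. First, since $A=M-N$ is a weak proper splitting of the first type with $A^{\dag}N\geq 0$, Theorem \ref{3.1.3} gives $\rho(M^{\dag}N)=\rho(A^{\dag}N)/(1+\rho(A^{\dag}N))<1$. Next, applying the second-type form of Theorem \ref{3.1.5} to $B_{\lambda}=M_{\lambda}-N_{\lambda}$ in the limit (together with the identity $\rho(M_{\lambda}^{-1}N_{\lambda})=\rho(N_{\lambda}M_{\lambda}^{-1})$), the hypothesis $\lim_{\lambda\to 0}N_{\lambda}B_{\lambda}^{-1}\geq 0$ secures $\lim_{\lambda\to 0}\rho(M_{\lambda}^{-1}N_{\lambda})<1$. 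These two facts play exactly the role of Steps 1 and 2 in the proof of Theorem \ref{4.2conv}.

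The comparison itself is then quite direct. Because $N_{\lambda}M_{\lambda}^{-1}\geq 0$ for every $\lambda>0$ (the type-II condition on $B_{\lambda}$), the limit $L:=\lim_{\lambda\to 0}N_{\lambda}M_{\lambda}^{-1}$, whose existence is implicit in the hypothesis, is nonnegative; also $M^{\dag}N\geq 0$ from the type-I condition on $A=M-N$. The assumed inequality $L\leq M^{\dag}N$ together with the monotonicity of the spectral radius on nonnegative matrices (Theorem \ref{2.25}) forces $\rho(L)\leq \rho(M^{\dag}N)$. Combining this with continuity of the spectral radius on $\R^{n\times n}$ one obtains $\lim_{\lambda\to 0}\rho(N_{\lambda}M_{\lambda}^{-1})=\rho(L)\leq \rho(M^{\dag}N)<1$, and passing back via $\rho(M_{\lambda}^{-1}N_{\lambda})=\rho(N_{\lambda}M_{\lambda}^{-1})$ concludes the argument.

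If a more Perron--Frobenius-flavoured proof is preferred (to match the style of Theorem \ref{4.2conv}), I would instead invoke Theorem \ref{2.1} to pick, for each $\lambda>0$, a nonnegative right eigenvector $y_{\lambda}$ of $N_{\lambda}M_{\lambda}^{-1}$ satisfying $N_{\lambda}M_{\lambda}^{-1}y_{\lambda}=\rho(N_{\lambda}M_{\lambda}^{-1})y_{\lambda}$. Normalising and extracting a convergent subsequence $y_{\lambda}\to y\geq 0$ ($y\neq 0$), the limiting relation together with $L\leq M^{\dag}N$ gives an inequality of the form $\alpha y\leq M^{\dag}N\,y$ with $\alpha=\lim_{\lambda\to 0}\rho(M_{\lambda}^{-1}N_{\lambda})$, to which Theorem \ref{2.4}(i) applies and yields $\alpha\leq \rho(M^{\dag}N)$. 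The main technical obstacle in either approach is precisely this interchange of $\lim_{\lambda\to 0}$ and $\rho$: the Berman--Plemmons-type comparison results are stated pointwise, so one must either invoke continuity of the spectral radius explicitly, or carry out the eigenvector extraction carefully. Beyond that, the proof is a routine application of the tools already collected in Section \ref{prel}.
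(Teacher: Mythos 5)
Your proof is correct, but it follows a genuinely more direct route than the paper's. The paper's own argument first rewrites both sides of the hypothesis through splitting identities: from $B_{\lambda}=M_{\lambda}-N_{\lambda}$ it gets $M_{\lambda}^{-1}=B_{\lambda}^{-1}(I+N_{\lambda}B_{\lambda}^{-1})^{-1}$, and from Theorem \ref{2.6}(ii) applied to the proper splitting $A=M-N$ it gets $M^{\dag}=(I+A^{\dag}N)^{-1}A^{\dag}$; the assumed inequality then reads $\lim_{\lambda\to 0}N_{\lambda}B_{\lambda}^{-1}(I+N_{\lambda}B_{\lambda}^{-1})^{-1}\leq (I+A^{\dag}N)^{-1}A^{\dag}N$, and after pre-multiplying by $I+A^{\dag}N\geq 0$ and post-multiplying by $I+N_{\lambda}B_{\lambda}^{-1}\geq 0$ it collapses to $\lim_{\lambda\to 0}N_{\lambda}B_{\lambda}^{-1}\leq A^{\dag}N$; only then does the paper apply Theorem \ref{2.25} and the strictly increasing map $\gamma\mapsto\gamma/(1+\gamma)$, transferring the comparison back to the iteration matrices via Theorems \ref{3.1.3} and \ref{3.1.5}. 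You instead apply Theorem \ref{2.25} directly to $0\leq L=\lim_{\lambda\to 0}N_{\lambda}M_{\lambda}^{-1}\leq M^{\dag}N$ and then use continuity of the spectral radius together with $\rho(M_{\lambda}^{-1}N_{\lambda})=\rho(N_{\lambda}M_{\lambda}^{-1})$; this is shorter and dispenses with Theorem \ref{2.6}(ii) and the fractional-linear transfer (which you still need, as does the paper, only for the strict bound $\rho(M^{\dag}N)<1$ and $\lim_{\lambda\to 0}\rho(M_{\lambda}^{-1}N_{\lambda})<1$, obtained exactly as in the paper). The price is the explicit appeal to continuity of $\rho$ to interchange $\lim_{\lambda\to 0}$ with $\rho$, but the paper makes the same interchange tacitly when it applies Theorem \ref{2.25} to the limit matrix and writes $\lim_{\lambda\to 0}\rho(N_{\lambda}B_{\lambda}^{-1})$, so you are not assuming anything extra --- you are merely naming it; your Perron--Frobenius variant with a normalized eigenvector extraction is likewise sound and is closer in spirit to the paper's proof of Theorem \ref{4.2conv}.
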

\begin{proof}
By Theorem \ref{3.1.3} and Theorem \ref{3.1.5}, $\rho(M^{\dag}N)<1$ and $\lim_{\lambda\to 0}\rho(M_{\lambda}^{-1}N_{\lambda})<1$, respectively. The splitting of the matrix $B_{\lambda} = M_{\lambda}-N_{\lambda}$ gives  $M_{\lambda} = (I+N_{\lambda}B_{\lambda}^{-1})B_{\lambda}$. Hence $M_{\lambda}^{-1} = B_{\lambda}^{-1}(I+N_{\lambda}B_{\lambda}^{-1})^{-1}$. Applying Theorem \ref{2.6} $(ii)$ to the proper splitting $A = M-N$, we get $M^{\dag} = (I+A^{\dag}N)^{-1}A^{\dag}$. Therefore, the condition $\lim_{\lambda\to 0}N_{\lambda}M_{\lambda}^{-1} \leq M^{\dag}N$ implies 
\begin{equation}\label{coneq1}
 \lim_{\lambda\to 0}(N_{\lambda}B_{\lambda}^{-1}(I+N_{\lambda}B_{\lambda}^{-1})^{-1}) \leq (I+A^{\dag}N)^{-1}A^{\dag}N.   
\end{equation}
Since $I+A^{\dag}N \geq 0$ and $\lim_{\lambda\to 0}(I+N_{\lambda}B_{\lambda}^{-1}) \geq 0$, so pre-multiplying $I+A^{\dag}N$ and post-multiplying $\lim_{\lambda\to 0}(I+N_{\lambda}B_{\lambda}^{-1})$ to the equation (\ref{coneq1}), we obtain  
\begin{equation}\label{eq33.6}
    \lim_{\lambda\to 0}(I+A^{\dag}N)N_{\lambda}B_{\lambda}^{-1} \leq \lim_{\lambda\to 0}A^{\dag}N(I+N_{\lambda}B_{\lambda}^{-1}).
\end{equation}
 Equation \eqref{eq33.6} lead to $\lim_{\lambda\to 0}N_{\lambda}B_{\lambda}^{-1}\leq \lim_{\lambda\to 0}A^{\dag}N=A^{\dag}N$. By Theorem \ref{2.25}, we have $\rho(A^{\dag}N)\geq \lim_{\lambda\to 0}\rho( N_{\lambda}B_{\lambda}^{-1})\geq 0$. As $\frac{\gamma}{\gamma+1}$ is a strictly increasing function for every $\gamma \geq 0$, hence $\frac{\rho(A^{\dag}N)}{1+\rho(A^{\dag}N)} \geq \lim_{\lambda\to 0}\frac{\rho(N_{\lambda}B_{\lambda}^{-1})}{1+\rho(N_{\lambda}B_{\lambda}^{-1})}$. Again, by Theorem \ref{3.1.3} and Theorem \ref{3.1.5}, we get $\lim_{\lambda\to 0}\rho(M_{\lambda}^{-1}N_{\lambda})\leq \rho(M^{\dag}N)<1$.  \end{proof}

\subsection{Double weak splitting of type II}\label{sub3.2}
Motivated by the work of the authors \cite{lalic2014}, \cite{lalicx2014}, \cite{lamiao2012} and \cite{lasons1}, we have introduced the double weak splitting of type II for symmetric matrices. In connection to double weak splitting of type II, we have extended a few results of \cite{lasons1}. Further, some comparison theorems for double weak splitting of type I and type II have been established in this subsection. First, we define the double weak splitting of type II as follows.
\begin{definition}\label{w1.2}
The splitting $A = P-R+S$ is called double weak splitting of type II of  a symmetric nonsingular matrix $A\in\mathbb{R}^{n\times{n}}$ if $RP^{-1}\geq 0$ and $-SP^{-1}\geq 0$.
\end{definition}
Suppose $A = P-R+S$ be a double weak splitting of type II of a symmetric nonsingular matrix $A\in\rnn$. The iterative scheme corresponding to such type of splitting is 

\begin{equation*}\label{t2.1}
    x^{k+1} = (RP^{-1})^{T}x^{k}-(SP^{-1})^{T}x^{k-1}+(P^{-1})^{T}b,
\end{equation*}
and its block matrix representation is given by 
\begin{eqnarray}\label{t2.2}
  \nonumber  
\begin{pmatrix}
    x^{k+1} \\
    x^{k}
  \end{pmatrix}
  &=& \begin{pmatrix}
    (RP^{-1})^{T} & -(SP^{-1})^{T} \\
    I & 0
  \end{pmatrix} \begin{pmatrix}
    x^{k} \\
    x^{k-1}
  \end{pmatrix}
  +  \begin{pmatrix}
    (P^{-1})^{T}b \\
    0
  \end{pmatrix}\\
  &=& \widetilde{\mathbf{W}}\begin{pmatrix}
    x^{k} \\
    x^{k-1}
  \end{pmatrix}
  +  \begin{pmatrix}
    (P^{-1})^{T}b \\
    0
  \end{pmatrix} ,
\end{eqnarray}
where $I$ is an identity matrix of order $n$ and the iteration matrix $\widetilde{\mathbf{W}}$ is,
\begin{equation*}\label{t2.3}
     \widetilde{\mathbf{W}} = \begin{pmatrix}
    (RP^{-1})^{T} & -(SP^{-1})^{T} \\
    I & 0 
    \end{pmatrix}.
\end{equation*}

Next, we recall the following result from Song and Song \cite{lasons1}.

\begin{theorem}[Theorem 2.2, \cite{lasons1}]\label{song1.1}
Let  $A = P-R+S$ be  a double weak splitting of type I of the nonsingular matrix $A\in\rnn$. Then the double splitting is convergent if and only if $\rho(P^{-1}(R-S))<1$.
\end{theorem}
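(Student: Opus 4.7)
The plan is to reduce convergence to the spectral condition $\rho(\widehat{\mathbf{W}}) < 1$ via Theorem \ref{d2.21}, and then establish the equivalence
\[
\rho(\widehat{\mathbf{W}}) < 1 \iff \rho(T) < 1, \qquad T := P^{-1}(R-S).
\]
Under the type-I hypothesis, $T = P^{-1}R + (-P^{-1}S) \geq 0$ and $\widehat{\mathbf{W}} \geq 0$ entry-wise. Since $A = P(I-T)$ with $P$ invertible, the nonsingularity of $A$ forces $I-T$ to be invertible, so $1 \notin \sigma(T)$. A block Schur-complement computation on $\widehat{\mathbf{W}} - \mu I$ shows that the characteristic polynomial of $\widehat{\mathbf{W}}$ equals $\det(\mu^{2} I - \mu P^{-1}R + P^{-1}S)$, which at $\mu=1$ reads $\det(I-T) \neq 0$; hence $1 \notin \sigma(\widehat{\mathbf{W}})$ either.

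For the direction $\rho(T) < 1 \Rightarrow \rho(\widehat{\mathbf{W}}) < 1$, I would apply Theorem \ref{2.1} to $\widehat{\mathbf{W}}$: let $\mu := \rho(\widehat{\mathbf{W}})$ with nonnegative eigenvector $(u^{T}, v^{T})^{T}$. If $\mu = 0$ we are done. Otherwise the second block row forces $u = \mu v$, and substituting into the first gives $T_{\mu} u = \mu u$, where $T_{\mu} := P^{-1}R + (1/\mu)(-P^{-1}S) \geq 0$. Suppose for contradiction $\mu > 1$. Then $1/\mu < 1$, so $T_{\mu} \leq T$ componentwise; Theorem \ref{2.25} gives $\rho(T_{\mu}) \leq \rho(T) < 1$, while Theorem \ref{2.4}(i) applied to $T_{\mu} u = \mu u$ yields $\mu \leq \rho(T_{\mu}) < 1$, contradicting $\mu > 1$. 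Hence $\mu \leq 1$, and with $1 \notin \sigma(\widehat{\mathbf{W}})$ we conclude $\mu < 1$.

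For the converse, let $\nu := \rho(T)$ with nonnegative Perron eigenvector $u \geq 0$, $u \neq 0$, guaranteed by Theorem \ref{2.1}. Setting $w := (u^{T}, u^{T})^{T}$, a direct calculation yields $\widehat{\mathbf{W}} w = (Tu, u)^{T} = (\nu u, u)^{T}$. If $\nu \geq 1$, then $\nu u \geq u$, so $\widehat{\mathbf{W}} w \geq w$; Theorem \ref{2.4}(i) would then force $\rho(\widehat{\mathbf{W}}) \geq 1$, contradicting the hypothesis. Thus $\nu \leq 1$, and since $1 \notin \sigma(T)$, we actually have $\nu < 1$, completing the proof.

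The main obstacle I anticipate is the first direction: the block-companion form of $\widehat{\mathbf{W}}$ does not admit a Perron eigenvector that is directly an eigenvector of $T$, so the argument must pass through the auxiliary nonnegative matrix $T_{\mu}$, and the crucial squeeze $T_{\mu} \leq T$ for $\mu > 1$ depends on the type-I sign condition $-P^{-1}S \geq 0$; without this dominance relation, the comparison step via Theorem \ref{2.25} would be unavailable.
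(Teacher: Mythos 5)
Your proposal is correct, but note that the paper does not prove this statement at all: it is recalled verbatim from Song and Song (Theorem 2.2 of \cite{lasons1}), so there is no in-paper proof to compare against. Your argument is a sound, self-contained substitute. The reduction of ``convergent'' to $\rho(\widehat{\mathbf{W}})<1$ via Theorem \ref{d2.21} is legitimate and non-circular; the factorization $A=P(I-T)$ with $T=P^{-1}(R-S)$ correctly rules out $1\in\sigma(T)$, and the block identity $\det(\mu I-\widehat{\mathbf{W}})=\det(\mu^{2}I-\mu P^{-1}R+P^{-1}S)$ (valid since the lower blocks $-I$ and $\mu I$ commute) evaluates at $\mu=1$ to $\det(I-T)\neq 0$, so $1\notin\sigma(\widehat{\mathbf{W}})$, which is exactly what is needed to upgrade $\rho(\widehat{\mathbf{W}})\leq 1$ to strict inequality. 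In the forward direction the passage through $T_{\mu}=P^{-1}R+\mu^{-1}(-P^{-1}S)$, the dominance $T_{\mu}\leq T$ for $\mu>1$ (which, as you note, is where the type-I sign condition enters), and the applications of Theorems \ref{2.1}, \ref{2.25} and \ref{2.4}(i) are all correctly invoked; one should also record, as you implicitly do, that $u\neq 0$ because $u=\mu v$ and $\mu>0$. The converse via the test vector $w=(u^{T},u^{T})^{T}$ with the Perron vector $u$ of $T$ is clean (the final appeal to $1\notin\sigma(T)$ is redundant, since the contradiction already excludes $\nu\geq 1$). This is essentially the standard Perron--Frobenius comparison argument used in the cited source and in earlier work on double splittings (e.g.\ Shen and Huang \cite{lasheh1}); your explicit treatment of the boundary eigenvalue $1$ through the determinant identity is a tidy way to close the case $\rho=1$ that the quoted statement glosses over.
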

In regard to Theorem \ref{song1.1}, we have the following convergence theorem for double weak splitting of type II.

\begin{theorem}\label{weak1.1}
Let $A = P-R+S$ be a double weak splitting of type II of a symmetric nonsingular matrix $A\in\rnn$. Then $\rho(\widetilde{\mathbf{W}})<1$ if and only if $\rho((R-S)P^{-1}) = \rho(P^{-1}(R-S))<1$.
\end{theorem}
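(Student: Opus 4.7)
The plan is to show that the characteristic polynomials of $\widetilde{\mathbf{W}}$ and $\widehat{\mathbf{W}}$ coincide, so that $\rho(\widetilde{\mathbf{W}}) = \rho(\widehat{\mathbf{W}})$, and then invoke the known type I result (Theorem \ref{song1.1}) to obtain the stated equivalence. The symmetry of $A$ is not needed for this spectral identity; it only serves to guarantee that the type II iteration targets the correct solution, as ensured by the definition.

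First I would apply the Schur complement formula to
$$\mu I-\widetilde{\mathbf{W}} = \begin{pmatrix} \mu I-(RP^{-1})^{T} & (SP^{-1})^{T}\\ -I & \mu I\end{pmatrix},$$
using the invertible block $\mu I$ in the $(2,2)$-position (valid for $\mu\neq 0$). A direct calculation gives
$$\det(\mu I-\widetilde{\mathbf{W}}) = \det\!\bigl(\mu^{2}I-\mu(RP^{-1})^{T}+(SP^{-1})^{T}\bigr).$$
Since $\det(M)=\det(M^{T})$, transposing inside yields $\det(\mu^{2}I-\mu RP^{-1}+SP^{-1})=\det\!\bigl((\mu^{2}P-\mu R+S)P^{-1}\bigr)$. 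The same Schur reduction applied to $\mu I-\widehat{\mathbf{W}}$ gives $\det(\mu I-\widehat{\mathbf{W}})=\det\!\bigl(P^{-1}(\mu^{2}P-\mu R+S)\bigr)$. Both equal $\det(\mu^{2}P-\mu R+S)/\det(P)$, so the two characteristic polynomials (each of degree $2n$ in $\mu$) agree on the cofinite set $\{\mu\neq 0\}$ and hence coincide by polynomial identity. In particular $\widetilde{\mathbf{W}}$ and $\widehat{\mathbf{W}}$ have identical spectra, whence $\rho(\widetilde{\mathbf{W}})=\rho(\widehat{\mathbf{W}})$.

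From here the theorem follows: by Theorem \ref{song1.1} one has $\rho(\widehat{\mathbf{W}})<1$ if and only if $\rho(P^{-1}(R-S))<1$, and the standard identity $\rho(XY)=\rho(YX)$ gives $\rho(P^{-1}(R-S))=\rho((R-S)P^{-1})$. Chaining these equivalences produces $\rho(\widetilde{\mathbf{W}})<1\iff \rho((R-S)P^{-1})=\rho(P^{-1}(R-S))<1$, as desired.

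The only point requiring care is the restriction $\mu\neq 0$ in the Schur-complement step; I expect this to be the main (minor) technical obstacle, and the polynomial-identity argument sketched above dispenses with it. No additional structural hypothesis on $R,S,P$ beyond what is already in the definition of double weak splitting of type II is needed for the spectral equivalence, which is why the proof can be made quite short.
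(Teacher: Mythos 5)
Your Schur-complement computation is correct as far as it goes: the spectra of $\widetilde{\mathbf{W}}$ and $\widehat{\mathbf{W}}$ are both the root set of $\det(\mu^{2}P-\mu R+S)$, so $\rho(\widetilde{\mathbf{W}})=\rho(\widehat{\mathbf{W}})$ holds with no hypothesis beyond invertibility of $P$. The gap is in the final step. Theorem \ref{song1.1} is not an algebraic identity about $\widehat{\mathbf{W}}$: its hypothesis is that $A=P-R+S$ is a double weak splitting of type I, i.e.\ $P^{-1}R\geq 0$ and $-P^{-1}S\geq 0$, and its proof rests on Perron--Frobenius arguments applied to the nonnegative matrices $\widehat{\mathbf{W}}$ and $P^{-1}(R-S)$. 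Under the type II hypothesis you only know $RP^{-1}\geq 0$ and $-SP^{-1}\geq 0$, which does not imply the type I conditions, so you cannot invoke Theorem \ref{song1.1} for this splitting; and the equivalence you borrow genuinely fails without nonnegativity. For instance, with $n=1$, $P=1$, $R=-1.5$, $S=0.56$, the eigenvalues of $\widehat{\mathbf{W}}$ are the roots of $\mu^{2}+1.5\mu+0.56$, namely $-0.7$ and $-0.8$, so $\rho(\widehat{\mathbf{W}})=0.8<1$, while $\rho(P^{-1}(R-S))=2.06>1$. Thus the chain of equivalences breaks exactly at the point where the nonnegativity provided by the type II assumption must be used, contrary to your closing remark that no structural hypothesis is needed there.

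The repair is short and in fact makes the Schur-complement detour unnecessary: since $(P^{T})^{-1}R^{T}=(RP^{-1})^{T}\geq 0$ and $-(P^{T})^{-1}S^{T}=(-SP^{-1})^{T}\geq 0$, the decomposition $A^{T}=P^{T}-R^{T}+S^{T}$ is a double weak splitting of type I of the nonsingular matrix $A^{T}$, and its type I iteration matrix is precisely $\widetilde{\mathbf{W}}$. Applying Theorem \ref{song1.1} to this splitting gives $\rho(\widetilde{\mathbf{W}})<1$ if and only if $\rho\bigl((P^{T})^{-1}(R^{T}-S^{T})\bigr)=\rho\bigl(((R-S)P^{-1})^{T}\bigr)=\rho((R-S)P^{-1})=\rho(P^{-1}(R-S))<1$. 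This is essentially the paper's route, which observes that the type II conditions make $\widetilde{\mathbf{W}}\geq 0$ and then repeats the Song--Song argument for this nonnegative iteration matrix; your spectral identity $\rho(\widetilde{\mathbf{W}})=\rho(\widehat{\mathbf{W}})$ is a correct and mildly interesting observation, but by itself it does not bridge the missing nonnegativity.
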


\begin{proof}
Let $A = P-R+S$ be a double weak splitting of type II. Then  $(RP^{-1})^{T}\geq 0$ and $-(SP^{-1})^{T}\geq 0$. Hence $\widetilde{\mathbf{W}}\geq 0$. By proceeding similarly lines of the proof of Theorem \ref{song1.1}, we can prove the theorem.
\end{proof}

In view of Lemma $2.7$ of \cite{lasong2002} and Theorem \ref{weak1.1}, we can  show the following result.

\begin{theorem}\label{weak1.2}
Let $A=P-R+S$ be a double weak splitting of type II of the symmetric nonsingular matrix $A\in\rnn$. Then the followings conditions are equivalent:
\begin{enumerate}
\item[(i)] $\rho(\widetilde{\mathbf{W}})<1$.
\item[(ii)] $PA^{-1}\geq 0$ $(A^{-1}P^{T}\geq 0)$.
\item[(iii)] $PA^{-1}\geq I$ $(A^{-1}P^{T}\geq I)$.
\item[(iv)] $(R-S)A^{-1}\geq 0$ $(A^{-1}(R-S)^{T}\geq 0)$.
\item[(v)] $(R-S)A^{-1}\geq -I$ $(A^{-1}(R-S)^{T}\geq -I)$.
\item[(vi)] $(I-(R-S)^{T}(P^{-1})^{T})^{-1}\geq 0$.
\item[(vii)]  $(I-(R-S)^{T}(P^{-1})^{T})^{-1}\geq I$.  
\end{enumerate}

\end{theorem}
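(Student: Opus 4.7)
The plan is to adapt the proof of the type I analog (Theorem \ref{weaktp1.2}, i.e., Theorem 2.4 of \cite{lasons1}) by combining Theorem \ref{weak1.1} with Lemma 2.7 of \cite{lasong2002}, which for a nonnegative matrix $T$ supplies the three-way equivalence $\rho(T)<1 \Leftrightarrow (I-T)^{-1}\geq 0 \Leftrightarrow (I-T)^{-1}\geq I$. The hypothesis of a double weak splitting of type II immediately furnishes the required nonnegativity: $(R-S)P^{-1}=RP^{-1}+(-SP^{-1})\geq 0$.

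First I would invoke Theorem \ref{weak1.1} to reduce (i) to the scalar inequality $\rho((R-S)P^{-1})<1$. Applying Lemma 2.7 with $T=(R-S)P^{-1}$, and observing that $I-(R-S)P^{-1}=AP^{-1}$ so that $(I-T)^{-1}=PA^{-1}$, this is equivalent to $PA^{-1}\geq 0$ and to $PA^{-1}\geq I$; this establishes (i)$\Leftrightarrow$(ii)$\Leftrightarrow$(iii). The parenthetical transposed forms in (ii) and (iii) follow from $(PA^{-1})^{T}=A^{-1}P^{T}$, which uses the symmetry of $A^{-1}$.

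Next, the substitution $P=A+(R-S)$ yields $PA^{-1}=I+(R-S)A^{-1}$, from which (ii)$\Leftrightarrow$(v) and (iii)$\Leftrightarrow$(iv) are one-line algebraic consequences. For (vi) and (vii), I would simplify the displayed inverse as follows: using $(R-S)^{T}(P^{-1})^{T}=(P^{-1}(R-S))^{T}$ together with $I-P^{-1}(R-S)=P^{-1}A$ and the symmetry $A^{T}=A$, the expression $I-(R-S)^{T}(P^{-1})^{T}$ reduces to $(P^{-1}A)^{T}=A(P^{T})^{-1}$, so $(I-(R-S)^{T}(P^{-1})^{T})^{-1}=P^{T}A^{-1}$. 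The inequalities (vi) and (vii) then read $P^{T}A^{-1}\geq 0$ and $P^{T}A^{-1}\geq I$, which coincide with the transposed forms of (ii) and (iii). Closing the cycle of implications completes the proof.

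The principal hurdle I anticipate is the algebraic bookkeeping for (vi) and (vii): the type II hypothesis only gives $(R-S)P^{-1}\geq 0$ and not $P^{-1}(R-S)\geq 0$, so one must stay on the side where nonnegativity is available and appeal to the symmetry of $A$ at precisely the right moment to rewrite $(I-(R-S)^{T}(P^{-1})^{T})^{-1}$ as a transpose of $PA^{-1}$. Everything else is a direct application of Lemma 2.7 of \cite{lasong2002} followed by substitution of $P=A+(R-S)$.
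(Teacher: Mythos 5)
Your route --- Theorem \ref{weak1.1} plus Lemma 2.7 of \cite{lasong2002} applied to $T=(R-S)P^{-1}\geq 0$, followed by the identities $I-T=AP^{-1}$ and $P=A+(R-S)$ --- is exactly the argument the paper has in mind, and your treatment of (i)--(v), including the parenthetical transposed forms, is correct. The gap is in (vi)--(vii). You correctly simplify $(I-(R-S)^{T}(P^{-1})^{T})^{-1}=P^{T}A^{-1}$, but this does \emph{not} coincide with the transposed form of (ii): the transpose of (ii) is $A^{-1}P^{T}=(PA^{-1})^{T}$, whereas $P^{T}A^{-1}=(A^{-1}P)^{T}$. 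The type II hypothesis controls $(R-S)P^{-1}$, hence $PA^{-1}=(I-(R-S)P^{-1})^{-1}$; it says nothing about $P^{-1}(R-S)$, hence nothing about $A^{-1}P=(I-P^{-1}(R-S))^{-1}$, and the symmetry of $A$ does not convert one into the other unless $P$ is symmetric as well. So the last step of your plan, where you declare (vi)--(vii) to be transposes of (ii)--(iii), is precisely the step that fails.

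In fact the asserted equivalence is false as printed: take $A=\begin{pmatrix}3&1\\1&1\end{pmatrix}$, $P=\begin{pmatrix}4&2\\1&1\end{pmatrix}$, $R=\begin{pmatrix}1&1\\0&0\end{pmatrix}$, $S=0$. Then $A=P-R+S$ is symmetric nonsingular, $RP^{-1}=\begin{pmatrix}0&1\\0&0\end{pmatrix}\geq 0$ and $-SP^{-1}=0\geq 0$, $\rho(\widetilde{\mathbf{W}})=0<1$, and $PA^{-1}=\begin{pmatrix}1&1\\0&1\end{pmatrix}\geq I$, so (i)--(v) all hold; yet $(I-(R-S)^{T}(P^{-1})^{T})^{-1}=P^{T}A^{-1}=\frac{1}{2}\begin{pmatrix}3&-1\\1&1\end{pmatrix}\not\geq 0$, so (vi) fails. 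The factor order in (vi)--(vii) is evidently a misprint: the intended matrix is $(I-(P^{-1})^{T}(R-S)^{T})^{-1}=\left(I-((R-S)P^{-1})^{T}\right)^{-1}=A^{-1}P^{T}$, whose argument $((R-S)P^{-1})^{T}$ is nonnegative by hypothesis, so Lemma 2.7 applies and (vi)--(vii) become literally the transposes of (ii)--(iii); with that correction your proof (and the paper's one-line justification) closes. As written, you should either prove the corrected statement and flag the misprint, or not claim items (vi)--(vii).
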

If we consider $A = P_1-R_1+S_1 = P_2-R_2+S_2$ be two double weak splittings of a symmetric nonsingular matrix $A\in\rnn$,  Then  the respective iteration block matrices are

\begin{equation*}
     \widetilde{\mathbf{W}}_1 = \begin{pmatrix}
    (R_1P_1^{-1})^{T} & -(S_1P_1^{-1})^{T} \\
    I & 0 
    \end{pmatrix}~\mbox{and}~\widetilde{\mathbf{W}}_2 = \begin{pmatrix}
    (R_2P_2^{-1})^{T} & -(S_2P_2^{-1})^{T} \\
    I & 0 
    \end{pmatrix}.
\end{equation*}
To analyze the spectral radius of both iteration matrix $\widetilde{\mathbf{W}}_1$  and $\widetilde{\mathbf{W}}_2$, we follow the analogous of Song and Song \cite{lasons1}. We first define $\mathbb{A} = \begin{pmatrix}
    A & -I \\
    0 & I 
    \end{pmatrix}$. Then it is easy to verify $\mathbb{A}^{-1} = \begin{pmatrix}
    A^{-1} & A^{-1} \\
    0 & I 
    \end{pmatrix}$.
Further, consider $\mathbb{A}=\mathbb{M}_i-\mathbb{N}_i$
be two splitting of $\mathbb{A}\in{\mathbb{R}^{2n\times{2n}}}$. If we take \begin{equation}\label{we1.2}
    \mathbb{M}_i = \begin{pmatrix}
    P_i & 0 \\
    -S_i & I 
    \end{pmatrix}~\mbox{and}~\mathbb{N}_i = \begin{pmatrix}
    R_i-S_i & I \\
    -S_i & 0 
    \end{pmatrix},
\end{equation}
then we can show that 
\begin{equation}\label{we1.3}
\widetilde{\mathbf{W}}_i = (\mathbb{N}_i\mathbb{M}_i^{-1})^{T}, ~\mbox{for}~ i = 1,2.  
\end{equation}
We recall the comparison theorem of \cite{lasons1} which was proved for the double weak splitting of type I.
\begin{theorem}[Theorem $3.3$,\cite{lasons1}]\label{song1.3}
 Let $A = P_1-R_1+S_1 = P_2-R_2+S_2$ be two convergent and double weak splittings of type I of the monotone matrix $A\in\rnn$. If $P_1\leq P_2$ and $S_2\leq S_1$, then $\rho(\widehat{\mathbf{W}}_1)\leq \rho(\widehat{\mathbf{W}}_2)<1.$
\end{theorem}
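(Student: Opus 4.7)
The plan is to reduce the $2n \times 2n$ block eigenvalue problem for $\widehat{\mathbf{W}}_i$ to an $n$-dimensional spectral problem parametrized by $\mu$, and then exploit monotonicity in $\mu$. The algebraic key is the identity $\mu^{2}P_{i} - \mu R_{i} + S_{i} = \mu A - (1-\mu)(\mu P_{i} - S_{i})$, which follows directly from $A = P_{i}-R_{i}+S_{i}$. An eigenvalue $\mu \in (0,1)$ of $\widehat{\mathbf{W}}_{i}$ therefore corresponds to a vector $v \neq 0$ fixed by
\[
T_{i}(\mu) \;:=\; (1-\mu)\,A^{-1}P_{i} \;+\; \frac{1-\mu}{\mu}\,A^{-1}(-S_{i}).
\]
I would first note that $T_{i}(\mu) \geq 0$ on $(0,1)$: $A^{-1}P_{i} \geq 0$ is Theorem \ref{weaktp1.2}(iii), and $A^{-1}(-S_{i}) = (A^{-1}P_{i})(-P_{i}^{-1}S_{i})$ is a product of two nonnegative matrices.

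Next, assuming $\rho_{1} := \rho(\widehat{\mathbf{W}}_{1}) > 0$ (the case $\rho_{1} = 0$ is trivial), Theorem \ref{2.1} provides a nonnegative eigenvector $(v_{1},v_{2})^{T}$ of $\widehat{\mathbf{W}}_{1}$ at $\rho_{1}$. The second block row forces $v_{1} = \rho_{1}v_{2}$, and substituting into the first block row together with $R_{1} = P_{1} - A + S_{1}$ reduces the eigenvalue equation to $T_{1}(\rho_{1})v_{2} = v_{2}$ with $v_{2} \geq 0$, $v_{2} \neq 0$.

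Now the hypotheses plug in cleanly: $P_{2}-P_{1} \geq 0$ and $S_{1}-S_{2} \geq 0$ combined with $A^{-1} \geq 0$ give $T_{2}(\mu) \geq T_{1}(\mu)$ entry-wise on $(0,1)$, so $T_{2}(\rho_{1})v_{2} \geq T_{1}(\rho_{1})v_{2} = v_{2}$ and Theorem \ref{2.4}(i) yields $\rho(T_{2}(\rho_{1})) \geq 1$. Since $\mu \mapsto T_{2}(\mu)$ is continuous with $T_{2}(\mu) \to 0$ as $\mu \to 1^{-}$, the continuous map $\mu \mapsto \rho(T_{2}(\mu))$ drops from a value $\geq 1$ at $\mu = \rho_{1}$ down toward $0$, so by the intermediate value theorem some $\mu^{*} \in [\rho_{1},1)$ satisfies $\rho(T_{2}(\mu^{*})) = 1$. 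Applying Perron-Frobenius to $T_{2}(\mu^{*})$ produces $w \geq 0$, $w \neq 0$, with $T_{2}(\mu^{*})w = w$, and reversing the algebraic substitution shows $(\mu^{*}w,\,w)^{T}$ is an eigenvector of $\widehat{\mathbf{W}}_{2}$ at eigenvalue $\mu^{*}$. Hence $\mu^{*} \leq \rho_{2}$, giving $\rho_{1} \leq \mu^{*} \leq \rho_{2} < 1$.

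The hard part, I expect, will be the bridge from the scalar inequality $\rho(T_{2}(\rho_{1})) \geq 1$ back to the comparison $\rho_{1} \leq \rho_{2}$: direct monotonicity of $\mu \mapsto \rho(T_{2}(\mu))$ only supplies the weak form through Theorem \ref{2.25}, and one really does need the intermediate-value-plus-Perron device above to manufacture an honest eigenvalue of $\widehat{\mathbf{W}}_{2}$ dominating $\rho_{1}$. A secondary subtlety is handling the degenerate case $\rho_{1} = 0$, which I would dispose of separately at the outset, together with a quick check that the normalization $v_{1} = \rho_{1}v_{2}$ cannot collapse $v_{2}$ to zero.
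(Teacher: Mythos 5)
Your proof is correct, but note that the paper itself does not prove this statement at all: it is quoted verbatim as Theorem 3.3 of Song and Song \cite{lasons1}, and the route that source (and the surrounding machinery of this paper, namely the augmented matrix $\mathbb{A}$ and the splittings $\mathbb{M}_i,\mathbb{N}_i$ of \eqref{we1.2}--\eqref{we1.3}) follows is a reduction: the two double splittings of $A$ become two single weak splittings of the monotone block matrix $\mathbb{A}$ (monotone because $\mathbb{A}^{-1}$ has blocks $A^{-1}$, $A^{-1}$, $0$, $I$), the hypotheses $P_1\leq P_2$, $S_2\leq S_1$ translate into an ordering of $\mathbb{M}_1$ and $\mathbb{M}_2$, and an off-the-shelf single-splitting comparison theorem finishes the job -- this is exactly the skeleton the paper reuses for the type II analogues (Theorems \ref{weak1.3} and \ref{weak1.901}, the latter via Theorem \ref{giricor}). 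Your argument is genuinely different and self-contained: you eliminate $v_1$ to convert the $2n\times 2n$ eigenproblem into the fixed-point problem $T_i(\mu)v_2=v_2$ for the parametrized nonnegative family $T_i(\mu)=(1-\mu)A^{-1}P_i+\frac{1-\mu}{\mu}A^{-1}(-S_i)$ (the identity $\mu^2P_i-\mu R_i+S_i=\mu A-(1-\mu)(\mu P_i-S_i)$ checks out), justify $T_i(\mu)\geq 0$ correctly via Theorem \ref{weaktp1.2}(iii) and $A^{-1}(-S_i)=(A^{-1}P_i)(-P_i^{-1}S_i)$, use monotonicity of $A$ only through $A^{-1}(P_2-P_1)\geq 0$ and $A^{-1}(S_1-S_2)\geq 0$ to get $T_2(\mu)\geq T_1(\mu)$, and then -- the genuinely nontrivial step, which you execute correctly -- convert $\rho(T_2(\rho_1))\geq 1$ into an actual eigenvalue $\mu^*\in[\rho_1,1)$ of $\widehat{\mathbf{W}}_2$ by continuity of the spectral radius, the intermediate value theorem (using $T_2(\mu)\to 0$ as $\mu\to 1^-$), Theorem \ref{2.1}, and the reverse substitution $(\mu^*w,w)^T$; the degenerate cases ($\rho_1=0$, $v_2=0$) are handled. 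What your approach buys is that it needs only Theorems \ref{2.1}, \ref{2.4}, \ref{weaktp1.2} and elementary continuity, with the role of each hypothesis visible; what the reduction route buys is brevity and reusability, since once $\widehat{\mathbf{W}}_i$ is identified (up to spectral radius) with the iteration matrix of a single splitting of $\mathbb{A}$, the classical comparison theorems apply verbatim and transfer immediately to the type II setting.
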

Using similar lines of Theorem \ref{song1.3}, we can show the following result for double weak splitting type II.

\begin{theorem}\label{weak1.3}
Let $A = P_1-R_1+S_1 = P_2-R_2+S_2$ be two convergent double weak splittings of type II of a symmetric monotone matrix $A\in\rnn$. If $P_1\leq P_2$ and $S_2\leq S_1$, then $\rho(\widetilde{\mathbf{W}}_1)\leq \rho(\widetilde{\mathbf{W}}_2)<1$.
\end{theorem}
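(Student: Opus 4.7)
The plan is to reduce the comparison of $\rho(\widetilde{\mathbf{W}}_1)$ and $\rho(\widetilde{\mathbf{W}}_2)$ to a comparison of the block splittings $\mathbb{A} = \mathbb{M}_i - \mathbb{N}_i$ introduced in \eqref{we1.2}, so that the machinery of weak splittings of the second type can be applied to the nonsingular matrix $\mathbb{A}$ (whose inverse has the explicit form $\mathbb{A}^{-1} = \begin{pmatrix} A^{-1} & A^{-1} \\ 0 & I \end{pmatrix}$). By \eqref{we1.3} together with $\rho(B^T)=\rho(B)$, one has $\rho(\widetilde{\mathbf{W}}_i) = \rho(\mathbb{N}_i\mathbb{M}_i^{-1})$. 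Since the type II hypothesis makes $\widetilde{\mathbf{W}}_i$ entrywise nonnegative, it follows that $\mathbb{N}_i\mathbb{M}_i^{-1} \geq 0$, so $\mathbb{A} = \mathbb{M}_i - \mathbb{N}_i$ is a weak splitting of the second type of $\mathbb{A}$ for $i=1,2$.

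The heart of the argument lies in two entrywise inequalities. First I would block-multiply to obtain
\begin{equation*}
\mathbb{M}_i\mathbb{A}^{-1} = \begin{pmatrix} P_iA^{-1} & P_iA^{-1} \\ -S_iA^{-1} & -S_iA^{-1}+I \end{pmatrix},
\end{equation*}
and then verify $\mathbb{M}_i\mathbb{A}^{-1}\geq I_{2n}$ (equivalently $\mathbb{N}_i\mathbb{A}^{-1}\geq 0$) for each $i$. The convergence of each type II splitting together with Theorem \ref{weak1.2}(iii) gives $P_iA^{-1}\geq I$; combining this with the type II hypothesis $-S_iP_i^{-1}\geq 0$ yields $-S_iA^{-1} = (-S_iP_i^{-1})(P_iA^{-1}) \geq 0$. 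These block-nonnegativity statements furnish $\mathbb{N}_i\mathbb{A}^{-1}\geq 0$. Next, invoking monotonicity of $A$ (so $A^{-1}\geq 0$), the hypotheses $P_1\leq P_2$ and $S_2\leq S_1$ transfer under right multiplication by $A^{-1}$ to $P_1A^{-1}\leq P_2A^{-1}$ and $-S_1A^{-1}\leq -S_2A^{-1}$, and block-assembling these yields
\begin{equation*}
\mathbb{M}_1\mathbb{A}^{-1}\leq \mathbb{M}_2\mathbb{A}^{-1},\qquad\text{i.e.,}\qquad \mathbb{N}_1\mathbb{A}^{-1}\leq \mathbb{N}_2\mathbb{A}^{-1}.
\end{equation*}

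To finish, Theorem \ref{2.25} produces $\rho(\mathbb{N}_1\mathbb{A}^{-1})\leq \rho(\mathbb{N}_2\mathbb{A}^{-1})$, and the second-type case of Theorem \ref{3.1.5} applied to $\mathbb{A} = \mathbb{M}_i - \mathbb{N}_i$ gives
\begin{equation*}
\rho(\mathbb{N}_i\mathbb{M}_i^{-1}) = \frac{\rho(\mathbb{N}_i\mathbb{A}^{-1})}{1+\rho(\mathbb{N}_i\mathbb{A}^{-1})}<1,\qquad i=1,2.
\end{equation*}
Because $x\mapsto x/(1+x)$ is strictly increasing on $[0,\infty)$, the inequality for $\rho(\mathbb{N}_i\mathbb{A}^{-1})$ lifts to $\rho(\widetilde{\mathbf{W}}_1) = \rho(\mathbb{N}_1\mathbb{M}_1^{-1})\leq \rho(\mathbb{N}_2\mathbb{M}_2^{-1}) = \rho(\widetilde{\mathbf{W}}_2)<1$, as desired. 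The delicate step in this plan is establishing $-S_iA^{-1}\geq 0$: the type II hypothesis provides $-S_iP_i^{-1}\geq 0$ rather than $-S_iA^{-1}\geq 0$ directly, so one must route through the convergence-derived inequality $P_iA^{-1}\geq I$ (which itself uses the symmetry of $A$ via Theorem \ref{weak1.2}) to pass from the $P_i^{-1}$-formulation to the $A^{-1}$-formulation required by the block computation.
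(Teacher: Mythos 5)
Your proof is correct and follows essentially the route the paper intends: it lifts the two double splittings to the single block splittings $\mathbb{A}=\mathbb{M}_i-\mathbb{N}_i$ of \eqref{we1.2}--\eqref{we1.3} (the same device the paper borrows from Song and Song and uses again in Theorem \ref{weak1.901}), and the delicate step you flag is handled correctly, since convergence plus Theorem \ref{weak1.2}(iii) gives $P_iA^{-1}\geq I$ and hence $-S_iA^{-1}=(-S_iP_i^{-1})(P_iA^{-1})\geq 0$, so $\mathbb{N}_i\mathbb{A}^{-1}\geq 0$. The only difference is cosmetic: where the paper simply defers to the lines of Theorem \ref{song1.3}, you complete the comparison explicitly via Theorems \ref{2.25} and \ref{3.1.5} and the monotonicity of $t\mapsto t/(1+t)$, which is a valid and self-contained way to finish.
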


Corollary $3.13$ of \cite{lagiri2017} motivated us to study the above  comparison theorem without considering the monotone condition and established the following result.

\begin{theorem}\label{weak1.901}
Let $A = P_1-R_1+S_1 = P_2-R_2+S_2$ be two double weak splitting of type II of the  symmetric matrix $A\in\rnn$. If $P_2A^{-1}\geq P_1A^{-1}\geq 0$ and $S_2A^{-1}\leq S_1A^{-1}\leq 0$, then $\rho(\widetilde{\mathbf{W}}_1)\leq \rho(\widetilde{\mathbf{W}}_2)<1$.
\end{theorem}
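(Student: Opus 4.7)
The plan is to mimic the Song--Song embedding that was used to prove Theorem~\ref{weak1.3}, i.e., lift the two double weak splittings of type II to ordinary splittings of the $2n\times 2n$ block matrix $\mathbb{A}$ given by $\mathbb{A}=\mathbb{M}_i-\mathbb{N}_i$ with $\mathbb{M}_i$ and $\mathbb{N}_i$ as in \eqref{we1.2}. Since $\widetilde{\mathbf{W}}_i=(\mathbb{N}_i\mathbb{M}_i^{-1})^T$ by \eqref{we1.3} and $\rho(\cdot)$ is invariant under transposition, the whole task reduces to showing
$$\rho(\mathbb{N}_1\mathbb{M}_1^{-1})\leq \rho(\mathbb{N}_2\mathbb{M}_2^{-1})<1.$$
I will achieve this by showing that each $\mathbb{A}=\mathbb{M}_i-\mathbb{N}_i$ is a weak splitting of the second type of the nonsingular matrix $\mathbb{A}$ satisfying $\mathbb{N}_i\mathbb{A}^{-1}\geq 0$, and then comparing the two $\mathbb{N}_i\mathbb{A}^{-1}$ entrywise.

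A direct block computation yields
$$\mathbb{M}_i^{-1}=\begin{pmatrix} P_i^{-1} & 0\\ S_iP_i^{-1} & I\end{pmatrix},\qquad \mathbb{N}_i\mathbb{M}_i^{-1}=\begin{pmatrix} R_iP_i^{-1} & I\\ -S_iP_i^{-1} & 0\end{pmatrix},$$
so the defining conditions $R_iP_i^{-1}\geq 0$ and $-S_iP_i^{-1}\geq 0$ of a double weak splitting of type~II immediately give $\mathbb{N}_i\mathbb{M}_i^{-1}\geq 0$; hence $\mathbb{A}=\mathbb{M}_i-\mathbb{N}_i$ is a weak splitting of the second type. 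Next I would compute
$$\mathbb{N}_i\mathbb{A}^{-1}=\begin{pmatrix} P_iA^{-1}-I & P_iA^{-1}\\ -S_iA^{-1} & -S_iA^{-1}\end{pmatrix},\qquad \mathbb{M}_i\mathbb{A}^{-1}=I_{2n}+\mathbb{N}_i\mathbb{A}^{-1},$$
using $R_i-S_i=P_i-A$. The hypothesis $P_iA^{-1}\geq 0$ together with the equivalence (ii)$\Leftrightarrow$(iii) of Theorem~\ref{weak1.2} upgrades this to $P_iA^{-1}\geq I$, and combined with $-S_iA^{-1}\geq 0$ gives $\mathbb{N}_i\mathbb{A}^{-1}\geq 0$ (and a fortiori $\mathbb{M}_i\mathbb{A}^{-1}\geq 0$).

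Now the hypotheses $P_2A^{-1}\geq P_1A^{-1}$ and $S_2A^{-1}\leq S_1A^{-1}$ translate blockwise into $\mathbb{M}_1\mathbb{A}^{-1}\leq \mathbb{M}_2\mathbb{A}^{-1}$; since $\mathbb{M}_i\mathbb{A}^{-1}=I+\mathbb{N}_i\mathbb{A}^{-1}$, this is the same as $\mathbb{N}_1\mathbb{A}^{-1}\leq \mathbb{N}_2\mathbb{A}^{-1}$. Both are nonnegative, so Theorem~\ref{2.25} yields $\rho(\mathbb{N}_1\mathbb{A}^{-1})\leq \rho(\mathbb{N}_2\mathbb{A}^{-1})$. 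Finally, Theorem~\ref{3.1.5} applied to each weak splitting of the second type $\mathbb{A}=\mathbb{M}_i-\mathbb{N}_i$ gives
$$\rho(\mathbb{N}_i\mathbb{M}_i^{-1})=\frac{\rho(\mathbb{N}_i\mathbb{A}^{-1})}{1+\rho(\mathbb{N}_i\mathbb{A}^{-1})}<1,$$
and the strict monotonicity of $t\mapsto t/(1+t)$ on $[0,\infty)$ transfers the comparison to the iteration matrices, yielding $\rho(\widetilde{\mathbf{W}}_1)\leq \rho(\widetilde{\mathbf{W}}_2)<1$.

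The main obstacle is the verification of $\mathbb{N}_i\mathbb{A}^{-1}\geq 0$: the $(1,1)$ block is $P_iA^{-1}-I$, which is not obviously nonnegative from $P_iA^{-1}\geq 0$ alone. This is precisely where the symmetry of $A$ and the equivalence $P_iA^{-1}\geq 0\Leftrightarrow P_iA^{-1}\geq I$ from Theorem~\ref{weak1.2} are indispensable; without that upgrade, the block embedding argument would stall at this step.
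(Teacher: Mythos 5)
Your proof is correct, and while it uses the same Song--Song block lifting $\mathbb{A}=\mathbb{M}_i-\mathbb{N}_i$ from \eqref{we1.2}--\eqref{we1.3} as the paper, the comparison step is handled by genuinely different machinery. The paper only checks $\mathbb{M}_2\mathbb{A}^{-1}\geq \mathbb{M}_1\mathbb{A}^{-1}\geq 0$ (which needs nothing beyond $P_iA^{-1}\geq 0$ and $-S_iA^{-1}\geq 0$, since the $(2,2)$ block there is $-S_iA^{-1}+I$) and then invokes the Giri--Mishra result, Theorem \ref{giricor}, with $j=1$ and $\alpha=1$ to conclude $\rho(\mathbb{N}_1\mathbb{M}_1^{-1})\leq\rho(\mathbb{N}_2\mathbb{M}_2^{-1})<1$. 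You instead verify directly that each lifted splitting is a weak splitting of the second type with $\mathbb{N}_i\mathbb{M}_i^{-1}=\bigl(\begin{smallmatrix} R_iP_i^{-1} & I\\ -S_iP_i^{-1} & 0\end{smallmatrix}\bigr)\geq 0$, establish $\mathbb{N}_i\mathbb{A}^{-1}\geq 0$ (which, as you rightly note, forces the extra upgrade $P_iA^{-1}\geq 0\Rightarrow P_iA^{-1}\geq I$ via Theorem \ref{weak1.2}, since the $(1,1)$ block is $P_iA^{-1}-I$), and then combine Theorem \ref{2.25} with the explicit formula $\rho(\mathbb{N}_i\mathbb{M}_i^{-1})=\rho(\mathbb{N}_i\mathbb{A}^{-1})/(1+\rho(\mathbb{N}_i\mathbb{A}^{-1}))$ from Theorem \ref{3.1.5} and the monotonicity of $t\mapsto t/(1+t)$. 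Your route is more self-contained and in fact fits the stated conclusion better: Theorem \ref{giricor} as quoted requires $0<\alpha<1$ and delivers a strict inequality, so the paper's invocation with $\alpha=1$ is not literally covered by that statement, whereas your argument yields the non-strict comparison $\rho(\widetilde{\mathbf{W}}_1)\leq\rho(\widetilde{\mathbf{W}}_2)<1$ directly; the price is the additional appeal to Theorem \ref{weak1.2}, which the paper's version does not need.
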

\begin{proof}
From the conditions $P_1A^{-1}\geq 0$, $P_2A^{-1}\geq 0$ and Theorem \ref{weak1.2}, it is trivial that $\rho(\widetilde{\mathbf{W}}_1)<1$ and $\rho(\widetilde{\mathbf{W}}_2)<1$. Since $P_2A^{-1}\geq P_1A^{-1}\geq 0$ and $S_2A^{-1}\leq S_1A^{-1}\leq 0$, so we have 
\begin{equation*}
    \begin{pmatrix}
    P_2A^{-1} &  P_2A^{-1} \\
    -S_2A^{-1} & -S_2A^{-1}+I
    \end{pmatrix}\geq \begin{pmatrix}
    P_1A^{-1} &  P_1A^{-1} \\
    -S_1A^{-1} & -S_1A^{-1}+I
    \end{pmatrix}\geq 0.
\end{equation*}
Further, we can write as  
\begin{equation*}
    \begin{pmatrix}
    P_2 & 0 \\
    -S_2 & I
    \end{pmatrix}\begin{pmatrix}
    A^{-1} & A^{-1} \\
    0 & I
    \end{pmatrix}\geq \begin{pmatrix}
    P_1 & 0 \\
    -S_1 & I
    \end{pmatrix}\begin{pmatrix}
    A^{-1} & A^{-1} \\
    0 & I
    \end{pmatrix} \geq 0.
\end{equation*}
Hence by equation $(\ref{we1.2})$, $\mathbb{M}_2\mathbb{A}^{-1}\geq \mathbb{M}_1\mathbb{A}^{-1}\geq 0$. Taking $j=1$ and $\alpha=1$ in Theorem \ref{giricor},  for the splittings $\mathbb{A} = \mathbb{M}_1-\mathbb{N}_1 = \mathbb{M}_2-\mathbb{N}_2$, we can show $\rho(\mathbb{N}_1\mathbb{M}_1^{-1})\leq \rho(\mathbb{N}_2\mathbb{M}_2^{-1})<1$. Thus by equation (\ref{we1.3}),  $\rho(\widetilde{\mathbf{W}}_1)\leq \rho(\widetilde{\mathbf{W}}_2)<1$.
\end{proof}
In support of Theorem \ref{weak1.901}, the following example is worked-out.

\begin{example}\label{exthm3.13}
Consider \begin{eqnarray*}
A = \begin{bmatrix}
10 & -4\\
-4 & 6
\end{bmatrix} 
&=&
\begin{bmatrix}
12 & 0\\
0 & 8
\end{bmatrix}
-
\begin{bmatrix}
2 & 2\\
4 & 2
\end{bmatrix}
+
\begin{bmatrix}
0 & -2\\
0 & 0
\end{bmatrix}= P_1-R_1+S_1\\
&=& \begin{bmatrix}
16 & 0\\
0 & 10
\end{bmatrix}-\begin{bmatrix}
6 & 2\\
0 & 4
\end{bmatrix}
+
\begin{bmatrix}
0 & -2\\
-4 & 0
\end{bmatrix}= P_2-R_2+S_2
\end{eqnarray*}
be two convergent double weak splitting of type II 
of the matrix $A$. One can verify that 
\begin{center}
    $ P_2A^{-1}=\begin{bmatrix}
   2.1818 &   1.4545\\
    0.9091  &  2.2727
    \end{bmatrix}>\begin{bmatrix}
    1.6364  &  1.0909\\
    0.7273  &  1.8182
    \end{bmatrix}=P_1A^{-1}>0$, 
\end{center}
\begin{center}
    $ S_2A^{-1}=\begin{bmatrix}
   -0.1818  & -0.4545\\
   -0.5455  & -0.3636
\end{bmatrix}\leq \begin{bmatrix}
   -0.1818  & -0.4545\\
         0   &      0
    \end{bmatrix}=S_1A^{-1}\leq 0$, and
    \end{center}
$0.6667 = \rho(\widetilde{\mathbf{W}}_1)<\rho(\widetilde{\mathbf{W}}_2)=0.7729 <1$. 
\end{example}

Another comparison theorem for symmetric  nonsingular matrices presented below.
\begin{theorem}\label{weak1.90}
Let $A = P_1-R_1+S_1 = P_2-R_2+S_2$ be two convergent double weak splitting of type II of a symmetric matrix $A\in\mathbb{R}^{n\times{n}}$. If $R_1P_1^{-1}\geq R_2P_2^{-1}$ and $AP_1^{-1}\geq AP_2^{-1}$, then $\rho(\widetilde{\mathbf{W}}_1)\leq \rho(\widetilde{\mathbf{W}}_2)<1$.
\end{theorem}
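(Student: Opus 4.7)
The plan is to apply Theorem \ref{2.4}(i) to a carefully chosen nonnegative vector, namely the Perron--Frobenius eigenvector of $\widetilde{\mathbf{W}}_1$ itself. I will first reformulate the two hypotheses as an elementwise comparison between the blocks of $\widetilde{\mathbf{W}}_1$ and $\widetilde{\mathbf{W}}_2$, then compute $(\widetilde{\mathbf{W}}_2-\widetilde{\mathbf{W}}_1)x$ for the Perron eigenvector $x$ of $\widetilde{\mathbf{W}}_1$, and finally exploit $\rho(\widetilde{\mathbf{W}}_1)<1$ to show this vector is entrywise nonnegative.

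Setting $F_i = (R_iP_i^{-1})^{T}$ and $G_i = -(S_iP_i^{-1})^{T}$, both matrices are nonnegative by Definition \ref{w1.2}, so $\widetilde{\mathbf{W}}_i = \begin{pmatrix} F_i & G_i \\ I & 0 \end{pmatrix} \geq 0$. Since $A = P_i - R_i + S_i$ gives $(R_i-S_i)P_i^{-1} = I - AP_i^{-1}$, taking transposes yields $F_i + G_i = (I - AP_i^{-1})^{T}$. The hypothesis $AP_1^{-1} \geq AP_2^{-1}$ therefore transposes to $F_1 + G_1 \leq F_2 + G_2$, and combined with the transposed first hypothesis $F_1 \geq F_2$ it produces the key chain
\[
0 \leq F_1 - F_2 \leq G_2 - G_1.
\]
By Theorem \ref{weak1.1} and the convergence assumption, $\rho_i := \rho(\widetilde{\mathbf{W}}_i) < 1$ for $i=1,2$, and Theorem \ref{2.1} furnishes a nonnegative eigenvector $x = \begin{pmatrix} x_u \\ x_v \end{pmatrix} \neq 0$ of $\widetilde{\mathbf{W}}_1$ at $\rho_1$. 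The lower block of the eigenvalue equation forces $x_u = \rho_1 x_v$, so $x_v$ is nonzero and nonnegative. A direct computation then shows that the upper block of $(\widetilde{\mathbf{W}}_2 - \widetilde{\mathbf{W}}_1)x$ equals $\bigl[(G_2 - G_1) - \rho_1(F_1 - F_2)\bigr] x_v$, while the lower block is zero. The key chain together with $0 \leq \rho_1 < 1$ yields $\rho_1(F_1 - F_2) \leq F_1 - F_2 \leq G_2 - G_1$ elementwise, so the upper block is nonnegative. Hence $\widetilde{\mathbf{W}}_2 x \geq \widetilde{\mathbf{W}}_1 x = \rho_1 x$, and Theorem \ref{2.4}(i) applied to $\widetilde{\mathbf{W}}_2$ with vector $x$ and scalar $\rho_1$ delivers $\rho_1 \leq \rho_2 < 1$.

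The main obstacle is that the two hypotheses push $\widetilde{\mathbf{W}}_1$ and $\widetilde{\mathbf{W}}_2$ in opposite directions blockwise: $F_1 \geq F_2$ but $G_1 \leq G_2$. Consequently one cannot establish $\widetilde{\mathbf{W}}_1 \leq \widetilde{\mathbf{W}}_2$ elementwise and invoke Theorem \ref{2.25}, nor can one mimic the $\mathbb{M}\mathbb{A}^{-1}$ comparison used in the proof of Theorem \ref{weak1.901} in a direct way. The resolution is to test the difference against the Perron eigenvector of $\widetilde{\mathbf{W}}_1$, where the block coupling $x_u = \rho_1 x_v$ supplies a factor $\rho_1 < 1$ that is exactly the slack needed to absorb the positive discrepancy $F_1 - F_2$ inside the nonnegative matrix $G_2 - G_1$. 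The degenerate case $\rho_1 = 0$ is trivial since then $\rho_1 \leq \rho_2$ automatically.
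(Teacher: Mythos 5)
Your proof is correct and follows essentially the same route as the paper: both use the Perron eigenvector of $\widetilde{\mathbf{W}}_1$, the identity $R_i-S_i=P_i-A$ (your $F_i+G_i=(I-AP_i^{-1})^T$) together with the two hypotheses, and the slack from $\rho(\widetilde{\mathbf{W}}_1)<1$ (the paper's factor $\tfrac{1}{\rho(\widetilde{\mathbf{W}}_1)}\geq 1$ is your $\rho_1(F_1-F_2)\leq F_1-F_2$), before concluding with Theorem \ref{2.4}. The only difference is cosmetic bookkeeping: you express the comparison through $x_v$ and the chain $0\leq F_1-F_2\leq G_2-G_1$, while the paper works through $x_1$ and divides by $\rho(\widetilde{\mathbf{W}}_1)$.
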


\begin{proof}
If $\rho(\widetilde{\mathbf{W}}_1) = 0$, then it is trivial. Assume that $0<\rho(\widetilde{\mathbf{W}}_1)<1$. Since $\widetilde{\mathbf{W}}_1\geq 0$, so by Theorem \ref{2.1}, there exist a  eigenvector $x=(x_1,x_2)^T$ such that $\widetilde{\mathbf{W}}_1x = \rho(\widetilde{\mathbf{W}}_1)x$. Which implies 
\begin{equation}\label{eq33.10}
    (R_1P_1^{-1})^{T}x_1-(S_1P_1^{-1})^{T}x_2 = \rho(\widetilde{\mathbf{W}}_1)x_1 \mbox{ and }
    x_1 = \rho(\widetilde{\mathbf{W}}_1)x_2.
\end{equation}
Now
\begin{eqnarray}\label{eq33.11}
\nonumber
&&(R_2P_2^{-1})^{T}x_1-(S_2P_2^{-1})^{T}x_2-\rho(\widetilde{\mathbf{W}}_1)x_1\\
\nonumber
&& \hspace{2cm}=(R_2P_2^{-1})^{T}x_1-\frac{1}{\rho(\widetilde{\mathbf{W}}_1)}(S_2P_2^{-1})^{T}x_1-(R_1P_1^{-1})^{T}x_1+\frac{1}{\rho(\widetilde{\mathbf{W}}_1)}(S_1P_1^{-1})^{T}x_1\\
\nonumber
&& \hspace{2cm} \geq \frac{1}{\rho(\widetilde{\mathbf{W}}_1)}[(R_2P_2^{-1})^{T}-(R_1P_1^{-1})^{T}+(S_1P_1^{-1})^{T}-(S_2P_2^{-1})^{T}]x_1\\
\nonumber
&& \hspace{2cm}=\frac{1}{\rho(\widetilde{\mathbf{W}}_1)}[(P_2^{-1})^{T}(R_2^{T}-S_2^{T})+(P_1^{-1})^{T}(S_1^{T}-R_1^{T})]x_1\\
\nonumber
&& \hspace{2cm}=\frac{1}{\rho(\widetilde{\mathbf{W}}_1)}[(P_2^{-1})^{T}(P_2^{T}-A)+(P_1^{-1})^{T}(A-P_1^{T})]x_1\\
&& \hspace{2cm}= \frac{1}{\rho(\widetilde{\mathbf{W}}_1)}[(P_1^{-1})^{T}A-(P_2^{-1})^{T}A]x_1\geq 0.
\end{eqnarray}
Using equations \eqref{eq33.10} and \eqref{eq33.11}, we obtain 
\begin{center}
$\widetilde{\mathbf{W}}_2x-\rho(\widetilde{\mathbf{W}}_1)x = \begin{pmatrix}
    (R_2P_2^{-1})^{T}x_1-(S_2P_2^{-1})^{T}x_2-\rho(\widetilde{\mathbf{W}}_1)x_1\\ 
    x_1-\rho(\widetilde{\mathbf{W}}_1)x_2
    \end{pmatrix}\geq 0$.
\end{center}
Hence by Theorem \ref{2.4}, $\rho(\widetilde{\mathbf{W}}_1)\leq \rho(\widetilde{\mathbf{W}}_2)<1$. 
\end{proof}
The converse of the Theorem \ref{weak1.90} need not true in general as shown in the below example. 
\begin{example}\label{conv1.2}
Consider the matrices $A$, $P_1$, $R_1$, $S_1$, $P_2$, $R_2$, and $S_2$ as given in Example \ref{exthm3.13}. Clearly $0.6667 = \rho(\widetilde{\mathbf{W}}_1)< \rho(\widetilde{\mathbf{W}}_2)=0.7729 <1$ but
\begin{equation*}
    R_1P_1^{-1}-R_2P_2^{-1}= \begin{bmatrix}
-0.2083 & 0.0500\\
 0.3333 & -0.1500
\end{bmatrix}
\ngeq 0, AP_1^{-1}-AP_2^{-1}= \begin{bmatrix}
 0.2083  & -0.1000\\
   -0.0833  &  0.1500
\end{bmatrix}
\ngeq 0
\end{equation*}
\end{example}

On account of Theorem $3.6$ \cite{lasons1} and equations (\ref{we1.2}) and (\ref{we1.3}), the following comparison theorem similarly follows. 

\begin{theorem}\label{weak1.4}
Let $A = P_1-R_1+S_1 = P_2-R_2+S_2$ be two convergent double weak splittings of type II of the nonsingular symmetric matrix $A\in\rnn$. If any one of the following conditions
\begin{enumerate}
    \item[(i)] $P_2P_1^{-1}\geq I$ and $S_1P_1^{-1}\geq S_2P_1^{-1}$,
        \item[(ii)] $P_1P_2^{-1}\leq I$ and $S_1P_2^{-1}\geq S_2P_2^{-1}$,
\end{enumerate}
holds, then $\rho(\widetilde{\mathbf{W}}_1)\leq \rho(\widetilde{\mathbf{W}}_2)<1$.
\end{theorem}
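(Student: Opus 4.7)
My plan is to follow the same strategy as in Theorem~\ref{weak1.901}: lift the two type~II double weak splittings to the single splittings $\mathbb{A}=\mathbb{M}_1-\mathbb{N}_1=\mathbb{M}_2-\mathbb{N}_2$ of the $2n\times 2n$ matrix $\mathbb{A}$ defined in~(\ref{we1.2}), and then invoke Theorem~3.6 of~\cite{lasons1}. By (\ref{we1.3}) and $\rho(X^T)=\rho(X)$, we have $\rho(\widetilde{\mathbf{W}}_i)=\rho(\mathbb{N}_i\mathbb{M}_i^{-1})$, so the theorem reduces to showing $\rho(\mathbb{N}_1\mathbb{M}_1^{-1})\leq\rho(\mathbb{N}_2\mathbb{M}_2^{-1})<1$. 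The strict inequality $<1$ on each side is already guaranteed by the assumed convergence of the two splittings.

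The central step is a translation of the two hypotheses (i) and (ii) into the block-matrix world. Using the block-triangular form in~(\ref{we1.2}), I would first compute
$$\mathbb{M}_i^{-1}=\begin{pmatrix}P_i^{-1}&0\\ S_iP_i^{-1}&I\end{pmatrix},$$
and then verify that
$$\mathbb{M}_2\mathbb{M}_1^{-1}=\begin{pmatrix}P_2P_1^{-1}&0\\ (S_1-S_2)P_1^{-1}&I\end{pmatrix},\qquad \mathbb{M}_1\mathbb{M}_2^{-1}=\begin{pmatrix}P_1P_2^{-1}&0\\ (S_2-S_1)P_2^{-1}&I\end{pmatrix}.$$
Hypothesis~(i) then reads precisely $\mathbb{M}_2\mathbb{M}_1^{-1}\geq I_{2n}$, and hypothesis~(ii) reads precisely $\mathbb{M}_1\mathbb{M}_2^{-1}\leq I_{2n}$, which are exactly the two alternative block inequalities that appear as the hypotheses of Theorem~3.6 of~\cite{lasons1}.

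With this translation in hand, I would apply Theorem~3.6 of~\cite{lasons1} to the pair of single splittings $\mathbb{A}=\mathbb{M}_i-\mathbb{N}_i$, using that $\mathbb{N}_i\mathbb{M}_i^{-1}=(\widetilde{\mathbf{W}}_i)^T\geq 0$ is a direct consequence of the type~II conditions $R_iP_i^{-1}\geq 0$ and $-S_iP_i^{-1}\geq 0$, and that each $\rho(\mathbb{N}_i\mathbb{M}_i^{-1})<1$. The cited comparison result then delivers $\rho(\mathbb{N}_1\mathbb{M}_1^{-1})\leq\rho(\mathbb{N}_2\mathbb{M}_2^{-1})<1$, which via~(\ref{we1.3}) yields the desired $\rho(\widetilde{\mathbf{W}}_1)\leq\rho(\widetilde{\mathbf{W}}_2)<1$.

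The main obstacle I anticipate is ensuring that the background conditions of Theorem~3.6 of~\cite{lasons1} really do pass through the $2n\times 2n$ lifting. Specifically, I would need to check that the convergence hypothesis $\rho(\widetilde{\mathbf{W}}_i)<1$, combined with the symmetry of $A$ and Theorem~\ref{weak1.2}(ii) (which supplies $P_iA^{-1}\geq 0$ and hence $\mathbb{M}_i\mathbb{A}^{-1}\geq 0$ by the same block computation used in Theorem~\ref{weak1.901}), furnishes all the structural nonnegativity that the cited theorem relies on in its proof. Once that bookkeeping is verified, the two-paragraph block calculation above closes the argument.
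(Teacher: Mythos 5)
Your proposal is correct and follows essentially the same route as the paper, which disposes of this theorem with the one-line remark that it ``similarly follows'' from Theorem $3.6$ of \cite{lasons1} together with equations (\ref{we1.2}) and (\ref{we1.3}); your block computation of $\mathbb{M}_i^{-1}$, $\mathbb{M}_2\mathbb{M}_1^{-1}$ and $\mathbb{M}_1\mathbb{M}_2^{-1}$ just makes explicit how hypotheses (i) and (ii) transfer to the lifted splittings $\mathbb{A}=\mathbb{M}_i-\mathbb{N}_i$ with $\widetilde{\mathbf{W}}_i=(\mathbb{N}_i\mathbb{M}_i^{-1})^{T}\geq 0$. The only caveat is that Theorem $3.6$ of \cite{lasons1} is stated for (type I) double splittings of $A$ rather than literally for the lifted $2n\times 2n$ single splittings, so strictly speaking one mimics its proof in the lifted second-type setting (which is exactly what the paper's ``similarly follows'' intends) rather than quoting it verbatim; this bookkeeping point does not affect the substance of your argument.
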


Analogous to the Theorem $3.1$ of \cite{lamio2009}, Corollary $4.10$ of \cite{ladm1}, and the proof of Theorem \ref{weak1.90},  we obtain the below result for double weak splittings of type II.

\begin{theorem}\label{weak1.9}
Let $A_1 = P_1-R_1+S_1$ and $A_2  = P_2-R_2+S_2$ be two convergent double weak splitting of type II of the nonsingular symmetric matrices $A_1\in\mathbb{R}^{n\times{n}}$ and $A_2\in\mathbb{R}^{n\times{n}}$, respectively. If $R_1P_1^{-1}\geq R_2P_2^{-1}$ and $A_1P_1^{-1}\geq A_2P_2^{-1}$, then $\rho(\widetilde{\mathbf{W}}_1)\leq \rho(\widetilde{\mathbf{W}}_2)<1$.
\end{theorem}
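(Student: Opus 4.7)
The plan is to mimic the proof of Theorem \ref{weak1.90} almost verbatim, adjusting only the identity $R_i-S_i=P_i-A$ to the two-matrix version $R_i-S_i=P_i-A_i$, and then using the symmetry of \emph{both} $A_1$ and $A_2$ to convert the resulting expression into one involving $A_iP_i^{-1}$.

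First I observe that $\widetilde{\mathbf{W}}_1\geq 0$, since $R_1P_1^{-1}\geq 0$ and $-S_1P_1^{-1}\geq 0$. If $\rho(\widetilde{\mathbf{W}}_1)=0$, the inequality is trivial because convergence of the second splitting already gives $\rho(\widetilde{\mathbf{W}}_2)<1$. Otherwise, Theorem \ref{2.1} supplies a nonzero nonnegative eigenvector $x=(x_1,x_2)^T$ with $\widetilde{\mathbf{W}}_1x=\rho(\widetilde{\mathbf{W}}_1)x$, which gives the two relations $(R_1P_1^{-1})^Tx_1-(S_1P_1^{-1})^Tx_2=\rho(\widetilde{\mathbf{W}}_1)x_1$ and $x_1=\rho(\widetilde{\mathbf{W}}_1)x_2$.

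Next I would compute $\widetilde{\mathbf{W}}_2x-\rho(\widetilde{\mathbf{W}}_1)x$. Its lower block is $x_1-\rho(\widetilde{\mathbf{W}}_1)x_2=0$. For the upper block, I substitute $x_2=x_1/\rho(\widetilde{\mathbf{W}}_1)$ and replace the subtracted $\rho(\widetilde{\mathbf{W}}_1)x_1$ through the first eigenvector equation, exactly as in Theorem \ref{weak1.90}. The hypothesis $R_1P_1^{-1}\geq R_2P_2^{-1}$ together with $1-1/\rho(\widetilde{\mathbf{W}}_1)<0$ then yields the bound
\[
\bigl[(R_2P_2^{-1})^T-(S_2P_2^{-1})^T/\rho(\widetilde{\mathbf{W}}_1)-(R_1P_1^{-1})^T+(S_1P_1^{-1})^T/\rho(\widetilde{\mathbf{W}}_1)\bigr]x_1
\;\geq\;\frac{1}{\rho(\widetilde{\mathbf{W}}_1)}\bigl[(P_2^{-1})^T(R_2^T-S_2^T)+(P_1^{-1})^T(S_1^T-R_1^T)\bigr]x_1.
\]

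Here is the only place the proof departs from Theorem \ref{weak1.90}: using $R_i-S_i=P_i-A_i$ (two different $A_i$, not one common $A$), the bracket collapses to $(P_1^{-1})^TA_1-(P_2^{-1})^TA_2$. Because $A_1$ and $A_2$ are symmetric, this equals $(A_1P_1^{-1})^T-(A_2P_2^{-1})^T$, which is entrywise nonnegative by the second hypothesis; multiplying by $x_1\geq 0$ preserves the sign. Therefore $\widetilde{\mathbf{W}}_2x\geq\rho(\widetilde{\mathbf{W}}_1)x$, and Theorem \ref{2.4}(i) applied with $B=\widetilde{\mathbf{W}}_2$ and $\alpha=\rho(\widetilde{\mathbf{W}}_1)$ gives $\rho(\widetilde{\mathbf{W}}_1)\leq\rho(\widetilde{\mathbf{W}}_2)$; the strict upper bound is the assumed convergence of the second splitting.

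The only conceptual obstacle is bookkeeping the transposes: one must keep in mind that the iteration matrix $\widetilde{\mathbf{W}}_i$ is built from transposed blocks $(R_iP_i^{-1})^T$ and $(S_iP_i^{-1})^T$, so the hypothesis on $A_iP_i^{-1}$ enters the estimate only after a transposition that is made legitimate by the symmetry of each $A_i$. Everything else is mechanical manipulation identical to the proof of Theorem \ref{weak1.90}.
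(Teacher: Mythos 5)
Your proposal is correct and is exactly the route the paper intends: the paper gives no separate proof of Theorem \ref{weak1.9}, stating only that it follows analogously to Theorem \ref{weak1.90}, and your adjustment $R_i-S_i=P_i-A_i$ together with the symmetry of each $A_i$ to turn $(P_1^{-1})^TA_1-(P_2^{-1})^TA_2$ into $(A_1P_1^{-1})^T-(A_2P_2^{-1})^T\geq 0$ is precisely that adaptation. The use of the Perron eigenvector of $\widetilde{\mathbf{W}}_1$ and Theorem \ref{2.4} matches the paper's template argument.
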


Similarly, the following results can be proved by considering a double weak splitting of type I for the matrices $A_1$ and $A_2$. 
\begin{theorem}\label{weak1.10}
Let  $A_1 = P_1-R_1+S_1$ be a convergent double weak splitting of type II of the nonsingular symmetric matrix $A_1\in\mathbb{R}^{n\times{n}}$ and $A_2  = P_2-R_2+S_2$ be a convergent double weak splitting of type I of the nonsingular symmetric matrix  $A_2\in\mathbb{R}^{n\times{n}}$. If $P_2^{-1}R_2\leq (R_1P_1^{-1})^{T}$ and $ P_2^{-1}A_2\leq (P_1^{-1})^{T}A_1$, then $\rho(\widetilde{\mathbf{W}}_1)\leq \rho(\widehat{\mathbf{W}}_2)<1$.
\end{theorem}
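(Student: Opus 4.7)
The plan is to mimic the spectral-argument proof of Theorem \ref{weak1.90} but bridge the two splitting types by exploiting the identity $R_i-S_i=P_i-A_i$ together with the symmetry of $A_1$.  If $\rho(\widetilde{\mathbf{W}}_1)=0$ the conclusion is immediate from the convergence of the second splitting (Theorem \ref{weaktp1.2}), so we may assume $0<\rho(\widetilde{\mathbf{W}}_1)=:\rho<1$.  Since $A_1=P_1-R_1+S_1$ is a double weak splitting of type II, $\widetilde{\mathbf{W}}_1\geq 0$, and Theorem \ref{2.1} yields a nonnegative eigenvector $x=(x_1,x_2)^T$ with $\widetilde{\mathbf{W}}_1 x=\rho x$.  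This produces the two working relations
\[
(R_1P_1^{-1})^T x_1-(S_1P_1^{-1})^T x_2=\rho x_1,\qquad x_1=\rho x_2.
\]

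Next I would form the vector $\widehat{\mathbf{W}}_2 x-\rho x$ and show both components are nonnegative.  The second component equals $x_1-\rho x_2=0$ by the eigenvector equation.  For the first component I would substitute $x_2=x_1/\rho$ and then use the eigenvector equation to replace $\rho x_1$, obtaining
\[
(\widehat{\mathbf{W}}_2 x)_1-\rho x_1=\bigl[P_2^{-1}R_2-(R_1P_1^{-1})^T\bigr]x_1+\tfrac{1}{\rho}\bigl[(S_1P_1^{-1})^T-P_2^{-1}S_2\bigr]x_1.
\]
Because $P_2^{-1}R_2\leq (R_1P_1^{-1})^T$ the first bracket is $\leq 0$ on $x_1\geq 0$; multiplying by $1/\rho\geq 1$ only makes it smaller, so the whole expression is bounded below by $\tfrac{1}{\rho}\bigl\{[P_2^{-1}R_2-(R_1P_1^{-1})^T]+[(S_1P_1^{-1})^T-P_2^{-1}S_2]\bigr\}x_1$.

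The crux is to simplify that bracket using $R_i-S_i=P_i-A_i$.  For the type I splitting, $P_2^{-1}(R_2-S_2)=I-P_2^{-1}A_2$; for the type II splitting, after transposing, $(R_1P_1^{-1})^T-(S_1P_1^{-1})^T=I-(P_1^{-1})^T A_1^T=I-(P_1^{-1})^T A_1$, where the last equality uses symmetry of $A_1$.  Subtracting gives $(P_1^{-1})^T A_1-P_2^{-1}A_2$, which is $\geq 0$ by hypothesis.  Hence $\widehat{\mathbf{W}}_2 x\geq \rho x$ with $x\geq 0$, $x\neq 0$, and Theorem \ref{2.4}(i) yields $\rho\leq \rho(\widehat{\mathbf{W}}_2)$; convergence of the type I splitting, via Theorem \ref{weaktp1.2} (or directly), guarantees $\rho(\widehat{\mathbf{W}}_2)<1$.

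The main obstacle is keeping transposes straight when mixing the type II iteration matrix (which carries $(R_1P_1^{-1})^T$, $(S_1P_1^{-1})^T$) with the type I iteration matrix (which carries $P_2^{-1}R_2$, $P_2^{-1}S_2$); the symmetry of $A_1$ is precisely what lets the two ``$I-\cdot$'' pieces line up so that the hypothesis $P_2^{-1}A_2\leq (P_1^{-1})^T A_1$ closes the argument.
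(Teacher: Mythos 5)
Your proposal is correct and is essentially the proof the paper intends: the paper proves Theorem \ref{weak1.10} by the same scheme as Theorem \ref{weak1.90} (nonnegative eigenvector of $\widetilde{\mathbf{W}}_1$, the $1/\rho$ bounding trick on the nonpositive bracket, collapsing $R_i-S_i=P_i-A_i$ with symmetry of $A_1$, then Theorem \ref{2.4}). No gaps; your handling of the transposes and of the trivial case $\rho(\widetilde{\mathbf{W}}_1)=0$ matches the paper's argument.
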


Exchanging the splitting type in Theorem \ref{weak1.10}, we obtain the below result.

\begin{theorem}\label{weak1.11}
Let  $A_1 = P_1-R_1+S_1$ be a convergent double weak splitting of type I of the nonsingular symmetric matrix $A_1\in\mathbb{R}^{n\times{n}}$ and $A_2  = P_2-R_2+S_2$ be a convergent double weak splitting of type II of the nonsingular symmetric matrix  $A_2\in\mathbb{R}^{n\times{n}}$. If $(R_2P_2^{-1})^{T}\leq P_1^{-1}R_1$ and $(P_2^{-1})^{T}A_2\leq P_1^{-1}A_1$, then $\rho(\widehat{\mathbf{W}}_1)\leq \rho(\widetilde{\mathbf{W}}_2)<1$.
\end{theorem}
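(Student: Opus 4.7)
The plan is to mirror the argument of Theorem \ref{weak1.90}, but with the roles of the two splitting types reversed so that we extract an eigenvector of $\widehat{\mathbf{W}}_1$ (rather than of $\widetilde{\mathbf{W}}_1$) and then test it against $\widetilde{\mathbf{W}}_2$. Since $A_1 = P_1 - R_1 + S_1$ is a double weak splitting of type I, we have $P_1^{-1}R_1 \geq 0$ and $-P_1^{-1}S_1 \geq 0$, so $\widehat{\mathbf{W}}_1 \geq 0$. The case $\rho(\widehat{\mathbf{W}}_1) = 0$ is immediate; otherwise, set $\rho_1 = \rho(\widehat{\mathbf{W}}_1) \in (0,1)$ (convergence gives the upper bound) and invoke Theorem \ref{2.1} to produce a nonnegative eigenvector $x = (x_1, x_2)^T$ with $\widehat{\mathbf{W}}_1 x = \rho_1 x$. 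This unpacks to $x_1 = \rho_1 x_2$ and $P_1^{-1}R_1 x_1 - P_1^{-1}S_1 x_2 = \rho_1 x_1$; in particular $\rho_1 x_1 = P_1^{-1}R_1 x_1 - \tfrac{1}{\rho_1}P_1^{-1}S_1 x_1$.

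The heart of the proof is to show $\widetilde{\mathbf{W}}_2 x - \rho_1 x \geq 0$. The second block coordinate is $x_1 - \rho_1 x_2 = 0$, so only the first coordinate
\[
\Delta := (R_2 P_2^{-1})^T x_1 - (S_2 P_2^{-1})^T x_2 - \rho_1 x_1
\]
needs attention. Substituting $x_2 = x_1/\rho_1$ and the expression for $\rho_1 x_1$ obtained above converts $\Delta$ into
\[
\Delta = \bigl[(R_2 P_2^{-1})^T - P_1^{-1}R_1\bigr] x_1 - \tfrac{1}{\rho_1}\bigl[(S_2 P_2^{-1})^T - P_1^{-1}S_1\bigr] x_1.
\]
Because $\rho_1 < 1$ gives $\tfrac{1}{\rho_1} > 1$, and because $(R_2 P_2^{-1})^T \leq P_1^{-1}R_1$ forces the bracketed $R$-term applied to $x_1 \geq 0$ to be $\leq 0$, the elementary inequality $(1 - \tfrac{1}{\rho_1})[(R_2 P_2^{-1})^T - P_1^{-1}R_1] x_1 \geq 0$ lets us replace the coefficient $1$ by $\tfrac{1}{\rho_1}$ on the $R$-term. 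Thus $\Delta \geq \tfrac{1}{\rho_1}\bigl[((R_2 - S_2)P_2^{-1})^T - P_1^{-1}(R_1 - S_1)\bigr] x_1$.

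At this point, the symmetry hypotheses on $A_1$ and $A_2$ become essential: writing $R_i - S_i = P_i - A_i$ for $i = 1,2$ and using $A_i^T = A_i$ yields $((R_2 - S_2)P_2^{-1})^T = I - (P_2^{-1})^T A_2$ and $P_1^{-1}(R_1 - S_1) = I - P_1^{-1}A_1$. The identity terms cancel, leaving
\[
\Delta \geq \tfrac{1}{\rho_1}\bigl[P_1^{-1}A_1 - (P_2^{-1})^T A_2\bigr] x_1 \geq 0
\]
by the second hypothesis $(P_2^{-1})^T A_2 \leq P_1^{-1}A_1$. Hence $\widetilde{\mathbf{W}}_2 x \geq \rho_1 x$ with $x \geq 0$, and Theorem \ref{2.4}(i) gives $\rho_1 \leq \rho(\widetilde{\mathbf{W}}_2)$, while $\rho(\widetilde{\mathbf{W}}_2) < 1$ follows from the assumed convergence of the second splitting.

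The main obstacle I anticipate is the bookkeeping in the third paragraph: the two iteration operators live on opposite sides of $P_i$ (one uses $P_1^{-1}(\cdot)$, the other uses $(\cdot)^T (P_2^{-1})^T$), so the manipulation only collapses neatly after the substitution $R_i - S_i = P_i - A_i$ and the use of $A_i^T = A_i$. Without symmetry, the transpose in the type II iteration matrix would not cancel against the identity coming from $I - P_1^{-1}A_1$, and the final inequality would not reduce to the clean hypothesis on $P_i^{-1}A_i$ stated in the theorem.
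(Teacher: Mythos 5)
Your proposal is correct and follows exactly the route the paper intends: the paper proves Theorem \ref{weak1.11} only by reference (``exchanging the splitting type'' in Theorems \ref{weak1.10}/\ref{weak1.90}), and your argument is precisely that mirrored Perron--Frobenius comparison --- take a nonnegative eigenvector of $\widehat{\mathbf{W}}_1\geq 0$, use $R_i-S_i=P_i-A_i$ together with $A_2^{T}=A_2$ to reduce $\widetilde{\mathbf{W}}_2x-\rho(\widehat{\mathbf{W}}_1)x\geq 0$ to the hypothesis $(P_2^{-1})^{T}A_2\leq P_1^{-1}A_1$, and conclude via Theorem \ref{2.4}. The only trivial quibble is that symmetry is needed only for $A_2$ in the identity $((R_2-S_2)P_2^{-1})^{T}=I-(P_2^{-1})^{T}A_2$, not for the identity $P_1^{-1}(R_1-S_1)=I-P_1^{-1}A_1$.
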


\subsection{Convergence and Comparison using double weak splittings}\label{sub3.}
In this subsection, we introduce a regularized iterative scheme for the well-posed system (\ref{lareq1}) based on double weak splittings. In addition, the convergence of the regularized scheme is established. Further, a few comparison theorems for the systems (\ref{laeq1}) \& (\ref{lareq1}) are analyzed with the help of double weak splittings.
 
Let $B_{\lambda} = P_\lambda-R_\lambda+S_\lambda$ be  a double splitting (introduced by Wo{\'z}nicki \cite{lawoz1}) of the nonsingular matrix $B_{\lambda}\in\rnn$. If $P_\lambda$ is invertible, then  regularized iterative scheme corresponding to the double splitting $B_{\lambda} = P_\lambda-R_\lambda+S_\lambda$  is given by 
\begin{center}
$x^{k+1} = P_\lambda^{-1}R_\lambda x^{k}-P_\lambda^{-1}S_\lambda x^{k-1}+P_\lambda^{-1}A^{T}b$.  
\end{center}
Further, its block matrix form is 
\begin{equation}\label{eq5}
\begin{pmatrix}
    x^{k+1} \\
    x^{k}
  \end{pmatrix}
  = \begin{pmatrix}
    P_\lambda^{-1}R_\lambda & -P_\lambda^{-1}S_\lambda \\
    I & 0
  \end{pmatrix} \begin{pmatrix}
    x^{k} \\
    x^{k-1}
  \end{pmatrix}
  +  \begin{pmatrix}
    P_\lambda^{-1}A^{T}b \\
    0
  \end{pmatrix}=\mathbf{W}_{\lambda}\begin{pmatrix}
    x^{k} \\
    x^{k-1}
  \end{pmatrix}
  +  \begin{pmatrix}
    P_\lambda^{-1}A^{T}b \\
    0
  \end{pmatrix},
\end{equation}
where $I$ is an identity matrix of order $n$ and $\mathbf{W}_{\lambda} = \begin{pmatrix}
    P_\lambda^{-1}R_\lambda & -P_\lambda^{-1}S_\lambda \\
    I & 0   
\end{pmatrix}$ is the iteration matrix.

Ensuing the idea of Golub et al. \cite{lagolv1} and \cite{lasheh1},  the iterative scheme (\ref{eq5}) for the system $B_{\lambda}x = A^{T}b$, converges to $B_{\lambda}^{-1}A^{T}b$ ($=A^{\dag}b$ as $\lambda \to 0$) for any initial vectors $x^{0}$ and $x^{1}$ if and only if $\lim_{ \lambda \to 0 }\rho(\mathbf{W}_{\lambda})<1$.  

In case of convergent double weak splittings of type I or type II and by virtue of Theorem \ref{2.1.10}, the convergence of (\ref{lareq1}) follows easily and presented below.
\begin{theorem}\label{4.1convd}
For any matrix $A\in\rmn$ and $\lambda >0$, let $B_{\lambda} = P_\lambda-R_\lambda+S_\lambda$ be a  double weak splitting of type I (respectively, type II) with $\lim_{\lambda\to 0}B_{\lambda}^{-1}P_\lambda\geq 0$ (respectively, $\lim_{\lambda\to 0}P_{\lambda}B_{\lambda}^{-1}\geq 0$), then the iterative scheme $(\ref{eq5})$ converges to $B_{\lambda}^{-1}A^{T}b$ ($ = A^{\dag}b$ as $\lambda \to 0 $).
\end{theorem}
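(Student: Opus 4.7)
The plan is to split the claim into two independent pieces: (i) for each fixed $\lambda>0$ the iteration (\ref{eq5}) converges to $B_\lambda^{-1}A^Tb$, and (ii) this fixed point tends to $A^\dag b$ as $\lambda\to 0$. Step (ii) requires no new work: Theorem \ref{2.1.8} states exactly $\lim_{\lambda\to 0}B_\lambda^{-1}A^T=A^\dag$, so right-multiplying by $b$ delivers it. The substance therefore lies in step (i).

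For step (i) I would first record that $B_\lambda=A^TA+\lambda I$ is symmetric and positive definite for every $\lambda>0$, so $B_\lambda=P_\lambda-R_\lambda+S_\lambda$ is genuinely a double weak splitting of a nonsingular symmetric matrix in the sense of Definitions \ref{w1.1} and \ref{w1.2}. In the type I case, equivalence (i)$\Leftrightarrow$(iii) of Theorem \ref{weaktp1.2}, applied to the splitting of $B_\lambda$, translates the hypothesis $B_\lambda^{-1}P_\lambda\ge 0$ directly into $\rho(\mathbf{W}_\lambda)<1$. The standard stationary‐iteration lemma then gives convergence of (\ref{eq5}) to the unique fixed point of the associated affine map, and a one‐line check identifies that fixed point as $B_\lambda^{-1}A^Tb$. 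In the type II case, the analogous appeal is to equivalence (i)$\Leftrightarrow$(ii) of Theorem \ref{weak1.2}, using the hypothesis $P_\lambda B_\lambda^{-1}\ge 0$; symmetry of $B_\lambda$ together with $\rho(M)=\rho(M^T)$ identifies the spectral radius of the iteration matrix of (\ref{eq5}) with that of the transpose‐form $\widetilde{\mathbf{W}}_\lambda$ of Section \ref{sub3.2}, so the same conclusion follows.

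The main obstacle is interpreting the hypothesis, which is phrased as a limit: $\lim_{\lambda\to 0}B_\lambda^{-1}P_\lambda\ge 0$ does not a priori force $B_\lambda^{-1}P_\lambda\ge 0$ for each individual $\lambda>0$, and Theorems \ref{weaktp1.2}/\ref{weak1.2} are pointwise statements. The clean resolution, parallel to the treatment of Theorem \ref{4.1conv}, is to read the hypothesis as asserting $B_\lambda^{-1}P_\lambda\ge 0$ throughout a neighbourhood of $0$, which is automatic if the splitting family is chosen so that the nonnegativity holds for every $\lambda>0$. If one insists on only the limit version, the rescue is to invoke continuity of $\lambda\mapsto\rho(\mathbf{W}_\lambda)$ (continuous dependence of eigenvalues on matrix entries) and pass to the limit in the equivalence, obtaining $\lim_{\lambda\to 0}\rho(\mathbf{W}_\lambda)<1$ and hence $\rho(\mathbf{W}_\lambda)<1$ for all sufficiently small $\lambda$. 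Either reading yields convergence of (\ref{eq5}) for $\lambda$ near $0$, and combining with step (ii) gives the stated limit $A^\dag b$.
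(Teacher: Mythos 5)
Your proposal is correct and takes essentially the same route as the paper, which justifies this theorem in one line: nonnegativity of $B_{\lambda}^{-1}P_{\lambda}$ (resp. $P_{\lambda}B_{\lambda}^{-1}$) gives $\rho(\mathbf{W}_{\lambda})<1$ via Theorem \ref{2.1.10} (equivalently the equivalences of Theorems \ref{weaktp1.2} and \ref{weak1.2} applied to the symmetric nonsingular matrix $B_{\lambda}$), the Golub--Varga criterion then yields convergence of (\ref{eq5}) to the fixed point $B_{\lambda}^{-1}A^{T}b$, and Theorem \ref{2.1.8} supplies the limit $A^{\dag}b$. Your extra care---identifying $\rho(\mathbf{W}_{\lambda})$ with $\rho(\widetilde{\mathbf{W}}_{\lambda})$ in the type II case and flagging that the hypothesis is phrased as a limit rather than pointwise nonnegativity---is more explicit than anything the paper records, but the underlying argument is the same.
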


Under the suitable sufficient condition, the following theorem signifies that the splitting of $B_{\lambda}$ will converge faster (in terms of spectral radius) than the splitting of the original matrix $A$.

\begin{theorem}\label{d2.4}
Let $A = P-R+S$ be a double proper weak splitting of type I of $A\in\rmn$ with $A^{\dag}P\geq 0$. For $\lambda >0$, let $B_{\lambda} = P_\lambda-R_\lambda+S_\lambda$ be a double weak splitting of type I with $B_{\lambda}^{-1}P_\lambda\geq 0$. If any one of the following conditions holds
\begin{enumerate}
    \item[(i)] $P^{\dag}R\geq \lim_{\lambda\to 0} P_\lambda^{-1}R_\lambda$ and $\lim_{\lambda\to 0} P_\lambda^{-1}S_\lambda \geq P^{\dag}S$,
    \item[(ii)] $P^{\dag}(R-S)\geq I$,
\end{enumerate}
then $\lim_{\lambda\to 0}\rho(\mathbf{W}_{\lambda})\leq \rho(\mathbf{W})<1$.  
\end{theorem}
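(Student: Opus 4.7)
The ``$<1$'' half of the conclusion is already at hand: Theorem~\ref{2.1.10} applied to the double proper weak splitting $A = P - R + S$ under $A^{\dag}P \geq 0$ gives $\rho(\mathbf{W}) < 1$, and Theorem~\ref{4.1convd} applied to $B_\lambda = P_\lambda - R_\lambda + S_\lambda$ under $B_\lambda^{-1} P_\lambda \geq 0$ gives $\lim_{\lambda \to 0} \rho(\mathbf{W}_\lambda) < 1$. So, in either case, the only real work is to establish the spectral-radius comparison $\lim_{\lambda \to 0} \rho(\mathbf{W}_\lambda) \leq \rho(\mathbf{W})$.

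For case~(i), the strategy is an entrywise comparison of the $2n\times 2n$ iteration block matrices. By Definition~\ref{w1.1}, Definition~\ref{2.3} and the given double-weak-splitting hypotheses, all four blocks $P^{\dag} R$, $-P^{\dag} S$, $P_\lambda^{-1} R_\lambda$, $-P_\lambda^{-1} S_\lambda$ are nonnegative, so $\mathbf{W} \geq 0$ and $\mathbf{W}_\lambda \geq 0$. The hypothesis $P^{\dag} R \geq \lim_{\lambda \to 0} P_\lambda^{-1} R_\lambda$ bounds the upper-left block, while rewriting $\lim_{\lambda \to 0} P_\lambda^{-1} S_\lambda \geq P^{\dag} S$ as $-P^{\dag} S \geq -\lim_{\lambda \to 0} P_\lambda^{-1} S_\lambda$ bounds the upper-right block; the lower blocks $I$ and $0$ agree. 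Hence $\mathbf{W} \geq \lim_{\lambda \to 0} \mathbf{W}_\lambda \geq 0$ entrywise, and Theorem~\ref{2.25} together with continuity of the spectral radius yields $\lim_{\lambda \to 0} \rho(\mathbf{W}_\lambda) = \rho\bigl(\lim_{\lambda \to 0} \mathbf{W}_\lambda\bigr) \leq \rho(\mathbf{W})$.

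For case~(ii), the plan is a Perron-eigenvector argument modelled on the proof of Theorem~\ref{weak1.90}. Since $\mathbf{W}_\lambda \geq 0$, Theorem~\ref{2.1} produces a nonnegative eigenvector $(x_1^T, x_2^T)^T$ attaining $\mu_\lambda := \rho(\mathbf{W}_\lambda)$; the block eigenvalue equation unpacks to $x_1 = \mu_\lambda x_2$ and $(\mu_\lambda P_\lambda^{-1} R_\lambda - P_\lambda^{-1} S_\lambda) x_2 = \mu_\lambda^2 x_2$. The idea is then to combine condition~(ii) with the proper-splitting identities $R - S = P - A$, $P^{\dag} P = A^{\dag} A$ (Theorem~\ref{2.6}) and the representation $A^{\dag} = (I - P^{\dag}(R-S))^{-1} P^{\dag}$ (Theorem~\ref{2.8}), together with $\lim_{\lambda \to 0} B_\lambda^{-1} A^T = A^{\dag}$ (Theorem~\ref{2.1.8}), to pass from $P^{\dag}(R-S) \geq I$ to a vector inequality that can be fed into Theorem~\ref{2.4} to force $\mu_\lambda \leq \rho(\mathbf{W})$. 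The main obstacle lies here: condition~(ii) is purely a statement about the splitting of $A$ and involves neither $\lambda$ nor $P_\lambda$, so the comparison with $\mathbf{W}_\lambda$ must be routed indirectly through the limit identity of Theorem~\ref{2.1.8} and the block structure of the iteration matrix. The delicate step is converting the entrywise lower bound $P^{\dag}(R-S) \geq I$ into a spectral-radius bound on $\mathbf{W}_\lambda$ via the Perron eigenvector, without losing the $\lambda \to 0$ limit on the way.
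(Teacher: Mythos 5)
Your handling of the ``$<1$'' part and of case~(i) is sound: once you observe that all four upper blocks are nonnegative, the hypotheses of (i) give $\mathbf{W}\geq \lim_{\lambda\to 0}\mathbf{W}_{\lambda}\geq 0$ blockwise, and Theorem~\ref{2.25} plus continuity of the spectral radius ($\lim_{\lambda\to 0}\rho(\mathbf{W}_{\lambda})=\rho(\lim_{\lambda\to 0}\mathbf{W}_{\lambda})$, the limit matrix existing by hypothesis) yields the comparison. This is a legitimate and in fact more direct route than the paper's, which runs case~(i) through a Perron-eigenvector computation; the monotonicity argument buys brevity, the eigenvector argument has the advantage of setting up the machinery that case~(ii) actually needs.

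Case~(ii), however, is a genuine gap: you only state a plan and explicitly leave the ``delicate step'' unresolved, and the route you propose (through $A^{\dag}=(I-P^{\dag}(R-S))^{-1}P^{\dag}$ and $\lim_{\lambda\to 0}B_{\lambda}^{-1}A^{T}=A^{\dag}$) is a detour that never gets used. The paper's argument is short and uses none of that. Write $\mu:=\lim_{\lambda\to 0}\rho(\mathbf{W}_{\lambda})$; the case $\mu=0$ is trivial, so take $0<\mu<1$ and a nonnegative Perron vector $x=(x_1^{T},x_2^{T})^{T}$ of $\lim_{\lambda\to 0}\mathbf{W}_{\lambda}$ (Theorem~\ref{2.1}). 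Only the second block equation $x_1=\mu x_2$ is needed. Then $\mathbf{W}x-\mu x=(\Delta_1^{T},0)^{T}$ with $\Delta_1=\mu P^{\dag}Rx_2-P^{\dag}Sx_2-\mu^{2}x_2$. Since $P^{\dag}Rx_2\geq 0$, $-P^{\dag}Sx_2\geq 0$, and $0<\mu<1$ implies $\mu\geq\mu^{2}$ and $1\geq\mu^{2}$, one gets $\mu P^{\dag}Rx_2\geq \mu^{2}P^{\dag}Rx_2$ and $-P^{\dag}Sx_2\geq -\mu^{2}P^{\dag}Sx_2$, hence $\Delta_1\geq \mu^{2}\bigl[P^{\dag}(R-S)-I\bigr]x_2\geq 0$ by condition~(ii) and $x_2\geq 0$. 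Thus $\mathbf{W}x\geq\mu x$ with $x\geq 0$, $x\neq 0$, and Theorem~\ref{2.4} gives $\mu\leq\rho(\mathbf{W})$. So condition~(ii), although it mentions neither $\lambda$ nor $P_{\lambda}$, enters directly through the $\lambda$-free matrix $\mathbf{W}$ applied to the limiting Perron vector; the missing ingredient in your sketch is simply the elementary bound $\mu\geq\mu^{2}$, not any Moore--Penrose or regularization identity.
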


\begin{proof}
By Theorem \ref{weaktp1.2} and Theorem \ref{2.1.10}, it is clear that $\lim_{\lambda\to 0} \rho(\mathbf{W}_{\lambda})<1$ and $\rho(\mathbf{W})<1$. If $\lim_{\lambda\to 0}\rho(\mathbf{W}_{\lambda}) = 0$, then the theorem is trivial. Let us assume that $\lim_{\lambda\to 0}<\lim_{\lambda\to 0}\rho(\mathbf{W}_{\lambda})<1$. Since $\lim_{\lambda\to 0}\mathbf{W}_{\lambda}> 0$, by Theorem \ref{2.1} there exist a  vector $x (\neq 0)\in\mathbb{R}^2$ such that
$\lim_{\lambda\to 0}\mathbf{W}_{\lambda}x = \lim_{\lambda\to 0}\rho(\mathbf{W}_{\lambda})x$. This implies
\begin{center}
$\lim_{\lambda\to 0}P_\lambda^{-1}R_\lambda x_1-\lim_{\lambda\to 0}P_\lambda^{-1}S_\lambda x_2= \lim_{\lambda\to 0}\rho(\mathbf{W}_{\lambda})x_1$ and $
x_1 = \lim_{\lambda\to 0}\rho(\mathbf{W}_{\lambda})x_2$.
\end{center}
Now 
\begin{center}
$\mathbf{W}x-\lim_{\lambda\to 0}\rho(\mathbf{W}_{\lambda})x = \begin{pmatrix}
    P^{\dag}Rx_1-P^{\dag}Sx_2-\lim_{\lambda\to 0}\rho(\mathbf{W}_{\lambda})x_1\\ 
    x_1-\lim_{\lambda\to 0}\rho(\mathbf{W}_{\lambda})x_2
    \end{pmatrix}= \begin{pmatrix}
   \Delta_1\\ 
    0
    \end{pmatrix}$,
\end{center}
where $\Delta_1=P^{\dag}Rx_1-P^{\dag}Sx_2-\lim_{\lambda\to 0}\rho(\mathbf{W}_{\lambda})x_1$.\\
Case-I: Let $P^{\dag}R\geq \lim_{\lambda\to 0}P_\lambda^{-1}R_\lambda$ and $\lim_{\lambda\to 0}P_\lambda^{-1}S_\lambda \geq P^{\dag}S$. Then
\begin{eqnarray*}
\Delta_1 &=& P^{\dag}Rx_1-P^{\dag}Sx_2-\lim_{\lambda\to 0}\rho(\mathbf{W}_{\lambda})x_1\\
&=& P^{\dag}Rx_1-\lim_{\lambda\to 0}\frac{1}{\rho(\mathbf{W}_{\lambda})}P^{\dag}Sx_1-\lim_{\lambda\to 0}P_\lambda^{-1}R_\lambda x_1+\lim_{\lambda\to 0}\frac{1}{\rho(\mathbf{W}_{\lambda})}P_\lambda^{-1}S_\lambda x_1\\
&=& [(P^{\dag}R-\lim_{\lambda\to 0}P_\lambda^{-1}R_\lambda) +\lim_{\lambda\to 0}\frac{1}{\rho(\mathbf{W}_{\lambda})}(P_\lambda^{-1}S_\lambda-P^{\dag}S)]x_1\\
&\geq& 0.
\end{eqnarray*}
Hence $\mathbf{W}x-\lim_{\lambda\to 0}\rho(\mathbf{W}_{\lambda})x\geq 0$.\\
Case-II: Let $P^{\dag}(R-S)\geq I$. Then
\begin{eqnarray*}
\Delta_1 &=& P^{\dag}Rx_1-P^{\dag}Sx_2-\lim_{\lambda\to 0}\rho(\mathbf{W}_{\lambda})x_1\\
&=& \lim_{\lambda\to 0}\rho(\mathbf{W}_{\lambda})P^{\dag}Rx_2-P^{\dag}Sx_2-\lim_{\lambda\to 0}\rho(\mathbf{W}_{\lambda})^{2}x_2\\
&\geq & \lim_{\lambda\to 0}\rho(\mathbf{W}_{\lambda})^{2}P^{\dag}Rx_2-\lim_{\lambda\to 0}\rho(\mathbf{W}_{\lambda})^{2}P^{\dag}Sx_2-\lim_{\lambda\to 0}\rho(\mathbf{W}_{\lambda})^{2}x_2\\
&=& \lim_{\lambda\to 0}\rho(\mathbf{W}_{\lambda})^{2}[P^{\dag}(R-S)-I]x_2\\
&\geq & 0
\end{eqnarray*}
In both cases, we obtain $\mathbf{W}x-\lim_{\lambda\to 0}\rho(\mathbf{W}_{\lambda})x \geq 0$.  Thus by Theorem \ref{2.4}, $\lim_{\lambda\to 0}\rho(\mathbf{W}_{\lambda})\leq \rho(\mathbf{W})<1$.
\end{proof}
One numerical example is given below to demonstrate Theorem \ref{d2.4}.
\begin{example}\label{pe1.2.1}
Let us consider 
\begin{align*}
A = \begin{bmatrix}
4 & 2 & 0\\
2 & 1 & 0\\
0 & 0 & 2
\end{bmatrix}&=
\begin{bmatrix}
12 & 6 & 0\\
4 & 2 & 0\\
0 & 0 & 3
\end{bmatrix}
-
\begin{bmatrix}
7 & 3 & 0\\
3 & 0.5 & 0\\
0 & 0 & 0.5
\end{bmatrix}
+
\begin{bmatrix}
-1 & -1 & 0\\
-1 & -0.5 & 0\\
0 & 0 & -0.5
\end{bmatrix}\\
&= P-R+S.
\end{align*}
We can verify that $A=P-R+S$ is a double proper weak splitting of the type I with $A^{\dag}P\geq 0$. For $\lambda = 10^{-4}$, we have 
\begin{eqnarray*}
B_{\lambda}&=&  \begin{bmatrix}
20.0001 & 10 & 0\\
10 & 5.0001 & 0\\
0 & 0 & 4.0001
\end{bmatrix}= P_{\lambda}-R_{\lambda}+S_{\lambda}\\
&=&
\begin{bmatrix}
24.0001 & 12 & 0\\
12 & 6.0001 & 0\\
0 & 0 & 4.5001
\end{bmatrix}
-
\begin{bmatrix}
2 & 0 & 0\\
1 & 0 & 0\\
0 & 0 & 0.5
\end{bmatrix}
+
\begin{bmatrix}
-2 & -2 & 0\\
-1 & -1 & 0\\
0 & 0 & 0
\end{bmatrix}
\end{eqnarray*}
which  is a convergent double weak splitting of type I with $B_{\lambda}^{-1}P_{\lambda}\geq 0$. Further, 

\begin{equation*}
    P^{\dag}R-P_\lambda^{-1}R_\lambda = \begin{bmatrix}
0.4133 & 0.1900 & 0\\
0.2067 & 0.0950 & 0\\
0 & 0 & 0.0556 
\end{bmatrix} \geq 0,
\end{equation*}
\begin{equation*}
P_\lambda^{-1}S_\lambda-P^{\dag}S  = \begin{bmatrix}
0.0133 & 0.0033 & 0\\
0.0067 & 0.0017 & 0\\
0 & 0 & 0.1667 
\end{bmatrix} \geq 0,
\end{equation*}
and $0.35192 = \rho(\mathbf{W}_{\lambda})<\rho(\mathbf{W}) = 0.7321<1$.
\end{example}

In the next result, we discuss a comparison theorem for  considering a double weak splitting of type II for the regularized matrix $B_{\lambda}$.

\begin{theorem}\label{lam1.1}
Let $A = P-R+S$ be a convergent double proper weak splitting of type I of the singular symmetric matrix $A\in\rnn$. For $\lambda >0$, let $B_{\lambda}  = P_{\lambda}-R_{\lambda}+S_{\lambda}$ be a convergent double weak splitting of type II of the nonsingular matrix $B_{\lambda}$. If $\lim_{\lambda\to 0}(R_{\lambda}P_{\lambda}^{-1})^{T}\geq P^{\dag}R$, $\lim_{\lambda\to 0}(P_{\lambda}^{-1})^{T}B_{\lambda}^{T}\geq P^{\dag}A$, $P^{\dag}R>0$ and $-P^{\dag}S>0$, then $\lim_{\lambda\to 0}\rho(\widetilde{\mathbf{W}}_{\lambda})\leq \rho(\mathbf{W})<1$.
\end{theorem}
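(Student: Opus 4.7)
The plan is to adapt the Perron--Frobenius argument of Theorem~\ref{d2.4}, but taking the eigenvector on the non-regularized iteration matrix $\mathbf{W}$ and exploiting the symmetry of $A$ to cancel a projection term that would otherwise obstruct the comparison. The convergence hypotheses supply $\rho(\mathbf{W})<1$ (Theorem~\ref{2.1.10}) and $\lim_{\lambda\to 0}\rho(\widetilde{\mathbf{W}}_\lambda)<1$ (Theorem~\ref{weak1.2}), so only $\lim_{\lambda\to 0}\rho(\widetilde{\mathbf{W}}_\lambda)\le\rho(\mathbf{W})$ needs to be proved.

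Because $P^{\dag}R>0$ and $-P^{\dag}S>0$, both top blocks of $\mathbf{W}$ are strictly positive, while the identity block in the bottom-left routes the lower coordinates back into the upper ones; a short directed-graph check then shows that $\mathbf{W}$ is a nonnegative irreducible $2n\times 2n$ matrix. Applying Theorem~\ref{la2.10} yields a strictly positive eigenvector $x=(x_1,x_2)^{T}>0$ with $\mathbf{W}x=\rho(\mathbf{W})x$, i.e.\ $P^{\dag}Rx_1-P^{\dag}Sx_2=\rho(\mathbf{W})x_1$ and $x_1=\rho(\mathbf{W})x_2$. The first identity forces $\rho(\mathbf{W})x_1\in R(P^{\dag})=R(P^{T})$; combining the proper-splitting identity $N(P)=N(A)$ with the symmetry of $A$ gives $R(P^{T})=N(P)^{\perp}=N(A)^{\perp}=R(A^{T})=R(A)$. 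Since $\rho(\mathbf{W})>0$ by irreducibility, I conclude $x_1\in R(A)$ and hence $x_2=x_1/\rho(\mathbf{W})\in R(A)$. Because $P^{\dag}P$ is the orthogonal projector onto $R(P^{T})=R(A)$ (cf.\ Theorem~\ref{2.6}(i)), this yields $P^{\dag}Px_i=x_i$ for $i=1,2$.

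Write $X:=\lim_{\lambda\to 0}(R_\lambda P_\lambda^{-1})^{T}-P^{\dag}R$ and $Y:=\lim_{\lambda\to 0}(S_\lambda P_\lambda^{-1})^{T}-P^{\dag}S$, so that $X\ge 0$ by the first hypothesis. Using $\lim_{\lambda\to 0}(P_\lambda^{-1})^{T}B_\lambda^{T}=I-\lim_{\lambda\to 0}(R_\lambda P_\lambda^{-1})^{T}+\lim_{\lambda\to 0}(S_\lambda P_\lambda^{-1})^{T}$ together with $P^{\dag}A=P^{\dag}P-P^{\dag}R+P^{\dag}S$, the second hypothesis rewrites as $Y-X\ge P^{\dag}P-I$. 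Evaluating at $x_2$ annihilates the projection remainder, giving $(Y-X)x_2\ge 0$. Now compute: the second block of $\lim_{\lambda\to 0}\widetilde{\mathbf{W}}_\lambda x-\rho(\mathbf{W})x$ is $x_1-\rho(\mathbf{W})x_2=0$, and the first block, after substituting $\rho(\mathbf{W})x_1=P^{\dag}Rx_1-P^{\dag}Sx_2$, reduces to $Xx_1-Yx_2=-(Y-X)x_2-(1-\rho(\mathbf{W}))Xx_2\le 0$, using $X\ge 0$, $x_2>0$, and $0<\rho(\mathbf{W})<1$. Hence $\lim_{\lambda\to 0}\widetilde{\mathbf{W}}_\lambda x\le\rho(\mathbf{W})x$ with $x>0$, and Theorem~\ref{2.4}(ii) together with continuity of the spectral radius yields $\lim_{\lambda\to 0}\rho(\widetilde{\mathbf{W}}_\lambda)\le\rho(\mathbf{W})<1$.

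The main obstacle I anticipate is precisely the projection-cancellation step. Without the observation that the Perron components $x_1,x_2$ lie in $R(A)$, the leftover $(P^{\dag}P-I)x_2$ coming from the second hypothesis points the wrong way (since $A$ is singular, $P^{\dag}P-I\ne 0$ and $\le 0$ as a projector minus the identity), and the key inequality $(Y-X)x_2\ge 0$ would fail. The symmetry of $A$ is used essentially through the chain $R(P^{T})=R(A^{T})=R(A)$.
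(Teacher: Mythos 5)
Your argument is correct and is essentially the paper's own proof: both use strict positivity of $P^{\dag}R$ and $-P^{\dag}S$ to get irreducibility of $\mathbf{W}$ and a positive Perron vector $x=(x_1,x_2)$, deduce $x_1\in R(P^{\dag})=R(P^{T})$ so that $P^{\dag}P$ acts as the identity there, rewrite the two hypotheses to force $\lim_{\lambda\to 0}\widetilde{\mathbf{W}}_{\lambda}x\leq\rho(\mathbf{W})x$, and conclude via Theorem \ref{2.4}. The only cosmetic differences are that you carry the estimate on $x_2$ (splitting off $-(1-\rho(\mathbf{W}))Xx_2$) where the paper works with $x_1$ and the bound $Xx_1\leq\frac{1}{\rho(\mathbf{W})}Xx_1$, and your detour through $R(P^{T})=R(A)$ via symmetry is not actually needed.
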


\begin{proof}
If $\rho(\mathbf{W}) = 0$, then it is trivial. Assume that $0<\rho(\mathbf{W})<1$. From $P^{\dag}R>0$ and $-P^{\dag}S>0$ and  Lemma $3.1$ of \cite{lasheh1}, $\mathbf{W}$ is irreducible. By Theorem \ref{la2.10} there exists a positive vector $x =(x_1, x_2^T\in\mathbb{R}^2$ such that $\mathbf{W}x = \rho(\mathbf{W})x$. This leads to
\begin{equation}\label{eq33.13}
 P^{\dag}Rx_1-P^{\dag}S x_{2}=\rho(\mathbf{W})x_1 \mbox{ and }x_1= \rho(\mathbf{W})x_2.
\end{equation}
From equation \eqref{eq33.13}, it is clear that $x_1\in R(P^{\dag}) = R(P^{T})$. Now the iteration matrix corresponding to the double weak splitting of type II ($B_{\lambda}  = P_{\lambda}-R_{\lambda}+S_{\lambda}$ ) is  
\begin{equation*}
 \widetilde{\mathbf{W}}_{\lambda} = \begin{pmatrix}
    (R_\lambda P_\lambda^{-1})^{T} & -(S_\lambda P_\lambda^{-1})^{T} \\
    I & 0   
\end{pmatrix}.
\end{equation*}
Using $\widetilde{\mathbf{W}}_{\lambda}$ and equation\eqref{eq33.13}, we have 
\begin{eqnarray*}
\lim_{\lambda\to 0}\widetilde{\mathbf{W}}_{\lambda}x-\rho(\mathbf{W})x &=& \begin{pmatrix}
   \lim_{\lambda\to 0}(R_\lambda P_\lambda^{-1})^{T}x_1-\lim_{\lambda\to 0}(S_\lambda P_\lambda^{-1})^{T}x_2-\rho(\mathbf{W})x_1\\ 
    x_1-\rho(\mathbf{W})x_2
    \end{pmatrix}\\  
&=& \begin{pmatrix}
   (\lim_{\lambda\to 0}(R_\lambda P_\lambda^{-1})^{T}-P^{\dag}R)x_1-\frac{1}{\rho(\mathbf{W})} (\lim_{\lambda\to 0}(S_\lambda P_\lambda^{-1})^{T}-P^{\dag}S)x_1\\ 
   0
    \end{pmatrix}.
    \end{eqnarray*}
  Applying the condition $\lim_{\lambda\to 0}(R_{\lambda}P_{\lambda}^{-1})^{T}\geq P^{\dag}R$ and $P^{\dag}A-\lim_{\lambda\to 0}(P_{\lambda}^{-1})^{T}B_{\lambda}^{T}\leq 0$ , we obtain 
  \begin{eqnarray*}
  \lim_{\lambda\to 0}\widetilde{\mathbf{W}}_{\lambda}x-\rho(\mathbf{W})x&\leq& \frac{1}{\rho(\mathbf{W})}(\lim_{\lambda\to 0}(R_\lambda P_\lambda^{-1})^{T}-P^{\dag}R)x_1-\frac{1}{\rho(\mathbf{W})} (\lim_{\lambda\to 0}(S_\lambda P_\lambda^{-1})^{T}-P^{\dag}S)x_1\\
  &=& \frac{1}{\rho(\mathbf{W})}\lim_{\lambda\to 0}[(P_{\lambda}^{-1})^{T}(P_{\lambda}^{T}-B_{\lambda}^{T})+P^{\dag}(A-P)]x_1\\
  &=& \frac{1}{\rho(\mathbf{W})}(x_1-\lim_{\lambda\to 0}(P_{\lambda}^{-1})^{T}B_{\lambda}^{T}x_1+P^{\dag}Ax_1-x_1)
  \\
  &=& \frac{1}{\rho(\mathbf{W})}(P^{\dag}A-\lim_{\lambda\to 0}(P_{\lambda}^{-1})^{T}B_{\lambda}^{T})x_1\leq 0.
  \end{eqnarray*}
 Hence by Theorem \ref{2.4}, we get $\lim_{\lambda\to 0}\rho(\widetilde{\mathbf{W}}_{\lambda})\leq \rho(\mathbf{W})<1$. 
\end{proof}

\section{Conclusion}\label{con}
The notion of double weak splitting of type II was introduced along with regularized iterative scheme via Tikhonov's regularization. In regards to the double weak splitting of type II, a few convergence and comparison theorems have been proved.   The results in Section \ref{main} show that if we consider the regularized iterative scheme based on the splitting of $B_{\lambda}$ with some prescribed conditions, it converges faster (in terms of spectral radius) than the iterative scheme generated by the splitting of the original matrix $A$. Further, some comparison results are established with the help of weak splitting of the first type and second type, where we do not assume the monotone condition. Several equivalent comparison theorems of various combinations of weak splittings are also demonstrated. 

For future research perspectives, it is interesting to study the following points.
\begin{enumerate}
    \item The results derived in subsection \ref{sub3.2} can be extended to singular symmetric matrices.
    \item The three-step alternating iterative schemes derived in \cite{lanandi,lanandi1}, confirms that further extension can be possible by considering the alternating regularized iterative scheme. 
    \item The same idea can be developed for P-proper splittings \cite{laraj}.
    \item As tensors are natural extensions to matrices, one possible research could be to consider the multilinear system of tensor equations.
\end{enumerate}



\nocite{*}

\end{document}